\documentclass{article}
\usepackage{makeidx} 
\usepackage[utf8]{inputenc}
\usepackage{hyperref}
\usepackage{colortbl}
\usepackage[margin=1.25in]{geometry}
\usepackage[normalem]{ulem}
\usepackage{lscape}
\usepackage{wrapfig}
\usepackage{booktabs}
\usepackage{bbm}
\usepackage{xcolor}
\usepackage{graphicx}
\usepackage{amssymb}
\usepackage{amsbsy}
\usepackage{amsmath}
\usepackage{amsthm}
\usepackage{amsfonts}
\usepackage{enumitem}
\usepackage{subcaption}
\usepackage{caption}
\usepackage{listings}
\usepackage{authblk}
\usepackage[]{natbib}
\usepackage{enumitem}
\usepackage{algorithm}
\usepackage{tabularx}
\usepackage{algpseudocode}
\usepackage{accents}
\usepackage{multicol}
\usepackage{pgfplots}
\pgfplotsset{compat=1.18}
\usetikzlibrary{patterns, arrows.meta}
\usepgfplotslibrary{fillbetween}

\newcommand{\ignore}[1]{}

\hypersetup{
    colorlinks=true,
    linkcolor=blue,
    filecolor=magenta,      
    urlcolor=cyan,
}
\newtheorem{example}{Example}

\newtheorem{proposition}{Proposition}
\newtheorem{definition}{Definition}

\DeclareMathOperator{\relint}{relint}

\DeclareMathOperator{\dom}{dom}
\DeclareMathOperator{\epi}{epi}
\DeclareMathOperator{\conv}{conv}

\DeclareMathOperator{\Proj}{Proj}

\newcommand{\br}[1]{\{ {#1} \}}
\newcommand{\mc}[1]{\mathcal{#1}}

\newcommand{\ti}[1]{\tilde{#1}}
\newcommand{\mb}[1]{\mathbb{#1}}

\newcommand{\cbr}[1]{\left( #1 \right)}

\newcommand{\tcb}[1]{{#1}}

\newcommand{\ul}[1]{\underline{#1}}
\newcommand{\autotag}{\tag{\theequation}\stepcounter{equation}}
\newcommand{\ol}[1]{\overline{#1}}
\newcommand{\co}{\operatorname{co}}

\newcommand{\un}[1]{\underline{#1}}

\newcommand{\pmat}[1]{\begin{pmatrix}#1\end{pmatrix}}
\newcommand{\mbf}[1]{\mathbf{#1}}

\title{Normalization of ReLU Dual for Cut Generation in Stochastic Mixed-Integer Programs}
\date{\today}
\author{Akul Bansal}
\author{Simge K\"u\c{c}\"ukyavuz}
\affil{Industrial Engineering and Management Sciences \\ Northwestern University \\
{\texttt{akul@u.northwestern.edu, simge@northwestern.edu}}}

\begin{document}
\maketitle

\begin{abstract}
We study the Rectified Linear Unit (ReLU) dual, an existing dual formulation for stochastic programs that reformulates non-anticipativity constraints using ReLU functions to generate tight, non-convex, and mixed-integer representable cuts. While this dual reformulation guarantees convergence with mixed-integer state variables, it admits multiple optimal solutions that can yield weak cuts. To address this issue, we propose normalizing the dual in the extended space to identify solutions that yield stronger cuts. We prove that the resulting normalized cuts are tight and Pareto-optimal in the original state space. We further compare normalization with existing regularization-based approaches for handling dual degeneracy and explain why normalization offers key advantages. In particular, we show that normalization can recover any cut obtained via regularization, whereas the converse does not hold. Computational experiments demonstrate that the proposed approach outperforms existing methods by consistently yielding stronger cuts and reducing solution times on harder instances.
\end{abstract}

\section{Introduction}

We consider a multistage stochastic integer program (MSIP) defined on a scenario tree
$\mc{T}$ over stages $t = 1, \ldots, T$. Each node $n \in \mathcal{T}$
corresponds to a realization of uncertainty at a particular stage, and the set of
nodes at stage $t$ is denoted by $N_t$. For any node $n$, let $a(n)$ denote its
unique ancestor, $C(n)$ its set of children, and $q_{nm}$ the transition
probability from node $n$ to a child node $m \in C(n)$. At each node $n$, a $d_n$-dimensional state variable vector $x_n \in X_n \subseteq \mathbb{R}^{d_n}$ connects successive stages, and a local \tcb{variable vector} $y_n \in Y_n$ is specific to the subproblem at that node. The sets
$X_n$ and $Y_n$ may include integrality restrictions. Given the state $x_{a(n)}$ inherited
from its ancestor $a(n)$, the feasible decisions at node $n$ are given by the
polyhedron $H_n(x_{a(n)}) = \Big\{ (x_n, y_n) \;\big|\; T_n x_{a(n)} + W_n x_n + \overline{W}_n y_n = b_n \Big\},$ where $T_n, W_n, \overline{W}_n$ are technology and recourse matrices, and $b_n$ is a right-hand side vector of appropriate dimension. The MSIP can then be formulated as:
\begin{align} \label{prob:root}
\min_{x_1, y_1}\ 
& \Bigg\{ f_1(x_1, y_1)
+ \sum_{m \in C(1)} q_{1m}\, Q_m(x_1)
\ : \ 
(x_1, y_1) \in H_1(x_0) \cap (X_1 \times Y_1) \Bigg\},
\end{align}
where for any  node $n$, the associated value function is defined
recursively as
\begin{subequations} \label{prob:sub}
\begin{align}
Q_n(x_{a(n)}) 
= \min_{x_n, y_n}\ 
& f_n(x_n, y_n)
+ \sum_{m \in C(n)} q_{nm}\, Q_m(x_n), \\
& (x_n, y_n) \in H_n(x_{a(n)}) \cap (X_n \times Y_n). 
\end{align}
\end{subequations}
The objective function $f_n(\cdot)$ is linear, and the initial state $x_0$ is given \emph{apriori}.
For final-stage nodes $n \in N_T$, the set of children nodes $C(n)$ is empty, and the expected future cost
$\sum_{m \in C(n)} q_{nm}\, Q_m(x_n)$ is defined to be $0$. The function
$Q_n : \mathbb{R}^{d_{a(n)}} \to \mathbb{R} \cup \{+\infty\}$ takes value $\infty$
whenever the feasible set $H_n(x_{a(n)})$ is empty; we denote by
$\operatorname{dom}(Q_n)$ the set of incumbent states $x_{a(n)}$ for which the subproblem $Q_n$ is feasible. 

Although MSIPs admit a deterministic equivalent mixed-integer programming (MIP) formulation, the number of variables and constraints grows exponentially with the size of the scenario tree. This quickly overwhelms off-the-shelf solvers and motivates the use of decomposition methods for large-scale instances (see \citealt{romeijndersastochastic,kuccukyavuz2017introduction} for comprehensive reviews). A standard approach to decomposing the value function $Q_n(\cdot)$, is to reformulate the subproblem \eqref{prob:sub} as
\begin{subequations} \label{prob:sub_reform2}
    \begin{align*}
       Q_n \left(x_{a(n)} \right) = \min_{\substack{\\ x_n, y_n, z_n,\\ (\theta_m)_{m \in C(n)}}}\;& f_n(x_n, y_n) 
       + \sum_{m \in C(n)} q_{nm}\, \theta_{m}, \autotag  \\
       & (x_n, y_n) \in H_n(z_n) \cap (X_n \times Y_n), \autotag \label{sub_reform2:poly}\\
       & (x_n, \theta_m) \in \epi(Q_m),  &\forall m \in C(n), \autotag \label{sub_reform2:epi}\\
       & z_n = x_{a(n)}, \autotag \label{sub_reform2:copy}\\
       &z_n \in Z_{a(n)} \supseteq X_{a(n)},  \autotag \label{sub_reform2:copy_space}
    \end{align*}
\end{subequations}
where the set $\epi(Q_m)$ is defined as follows:
\begin{align} \label{eq:epi_Q}
    \epi(Q_m) = \br{(x_{a(m)}, \theta_m) \in \mb{R}^{d_{a(m)}} \times \mb{R}: \theta_m \geq Q_m(x_{a(m)}), x_{a(m)} \in \dom(Q_m)}.
\end{align}
When needed, we write $\epi_S(p)$ to denote the epigraph of function $p$ restricted to the set $S$. For instance, $\epi_{\dom(Q_m)}(Q_m)$ (or equivalently $\epi(Q_m)$ on $\dom(Q_m)$) denotes the epigraph in \eqref{eq:epi_Q}.

Because the value function $Q_m(\cdot)$ is not available in closed form, we approximate its epigraph $\epi(Q_m)$ with a polyhedral set $\Psi_m$ which is iteratively refined by adding cuts of the form $h_m^i(x_n, \theta_m) \ge 0$. At a given iteration $i$, the inequality $h_m^i(\cdot,\cdot) \ge 0$ serves either as an optimality cut, yielding a valid lower bound on $Q_m(\cdot)$, or as a feasibility cut that approximates the domain $\operatorname{dom}(Q_m)$. In the nested Benders’ method (\citealt{birge1985decomposition}), these cuts are computed by traversing the entire scenario tree, whereas sampling-based variants construct them using subsets of scenario paths at each iteration (\citealt{pereira1991multi,zou2019stochastic,fullner2025stochastic}).

To facilitate cut generation, it is common to introduce local variables $z_n$ together with copy constraints $z_n = x_{a(n)}$, and relax the domain with $z_n \in Z_{a(n)} \supseteq X_{a(n)}$. For instance, Benders cuts (\citealt{benders1962partitioning}) can be derived from the optimal value of the linear programming (LP) relaxation of subproblem \eqref{prob:sub_reform2} and the corresponding LP dual solution associated with the copy constraints \eqref{sub_reform2:copy}. When the value function is convex polyhedral, it admits an exact representation using finitely many Benders cuts (see \citealt{rahmaniani2017benders} for a review on classical Benders decomposition). In contrast, when decision variables are integer or binary, the value function $Q_{n}(\cdot)$ is typically nonlinear and nonconvex (\citealt{blair1982value}). For problems with purely integer or binary variables, several specialized decomposition techniques have been proposed, including disjunctive programming (\citealt{sen2005c, sen2006decomposition,qi2017ancestral}), Fenchel cuts (\citealt{ntaimo2013fenchel}), and parametric Gomory cuts (\citealt{gade2014decomposition,zhang2014finitely}). Additional structure has also been exploited in other settings, such as when the value function is monotone (\citealt{philpott2020midas}) or Lipschitz continuous (\citealt{ahmed2022stochastic,fullner2024lipschitz}).

More recently, there has been growing interest in decomposition methods with convergence guarantees for general MSIPs with {\em mixed-integer} state variables. For example, \citealt{van2024converging} study scaled cuts and show that the associated scaled-cut closures converge uniformly to the convex envelope of the expected recourse function. However, these cuts cannot be computed using scenario decomposition methods, and the computational performance of the multistage extension (\citealt{romeijnders2024benders}) has not been tested. 

In contrast, \citealt{zou2019stochastic} propose an alternative approach based on Lagrangian cuts derived by solving the Lagrangian dual obtained from relaxing the copy constraints \eqref{sub_reform2:copy}. Unlike Benders cuts, which only recover the LP relaxation of the value function, Lagrangian cuts can recover the closed convex envelope $\overline{{\co}}(Q_n)$ of $Q_n$ (see \citealt{chen2022generating} and \citealt{fullner2024lipschitz} for more details). Moreover, \citealt{zou2019stochastic} show that when the state variables are binary, Lagrangian cuts are sufficient to ensure tightness for $Q_n$, a property that, in turn, yields finite convergence of the decomposition method. To ensure convergence with non-binary state variables, \citealt{zou2019stochastic} propose encoding them with auxiliary binary variables. For continuous state variables, however, this discretization can dramatically expand the state space, increasing the cost of solving the Lagrangian dual and potentially making the problem computationally prohibitive (see \citealt{zou2019stochastic,fullnernew,yangyang2025} for implementation details and discussion).

\citealt{deng2024relu} strike a middle-ground approach by ensuring convergence for general MSIPs with mixed-integer state variables without inflating the state dimension. They propose the following reformulation of subproblem \eqref{prob:sub_reform2}
\begin{subequations} \label{sub_reform:relu}
\begin{align*}
    Q_n \left(x_{a(n)} \right) = \min_{\substack{\\ x_n, y_n, z_n,\\ (\theta_m)_{m \in C(n)}}}\;& f_n(x_n, y_n) 
       + \sum_{m \in C(n)} q_{nm}\, \theta_{m}, \\
    & \text{ s.t. }\eqref{sub_reform2:poly}, \eqref{sub_reform2:epi}, \eqref{sub_reform2:copy_space}, \autotag \label{sub_reform:orig_constr} \\
    & (z_{nk} - x_{a(n)k})^+ = 0, \quad (z_{nk} - x_{a(n)k})^- = 0, &\forall k \in [d_{a(n)}].\autotag \label{sub_reform:copy} 
\end{align*}
\end{subequations}
The notation $[d_{a(n)}]$ denotes the set $\br{1, \ldots d_{a(n)}}$. The copy constraints in \eqref{sub_reform2:copy} are written using ReLU functions in \eqref{sub_reform:copy}, where \((x)^+ := \max\{x,0\}\) and \((x)^- := \max\{-x,0\}\). These expressions are linearized using auxiliary variables that remain local to each subproblem, thereby preserving the state dimension. Relaxing the resulting copy constraints \eqref{sub_reform:copy} yields the ReLU dual, where the number of dual variables is bounded by twice the state dimension. Notably, the authors establish strong duality for this formulation, and the resulting ReLU-dual solutions can therefore be used to generate tight cuts even with mixed-integer state variables, which ensures asymptotic convergence for general MSIPs.

A related approach by \citealt{yangyang2025} also establishes asymptotic convergence for general MSIPs. Instead of using ReLU-based copy constraints in \eqref{sub_reform:copy}, they use the original copy constraints $z_n = x_{a(n)}$ in a lifted space. The ReLU-based copy constraints in \eqref{sub_reform:copy} can be interpreted as inducing a partition of the state space: along each dimension, the space is divided into regions to the left and right of the incumbent point, and the overall partition is obtained via the Cartesian product across all dimensions. The lifting procedure of \citealt{yangyang2025} generalizes this partition created by iteratively introducing binary state variables and expanding the state dimension.

All dual-based cut-generation schemes—whether based on the LP dual (as in classical Benders decomposition), the Lagrangian dual (\citealt{zou2019stochastic}), the ReLU dual of \citealt{deng2024relu}, or the lifted Lagrangian dual of \citealt{yangyang2025}—share a common challenge. They admit multiple optimal solutions, some of which can generate weak cuts. For instance, \citealt{bansal2024computational} show that coefficients of integer L-shaped cuts--known for their weak global approximation and resulting slow convergence---are one of the optimal solutions to the Lagrangian dual.

In the LP-dual/Benders setting, this issue has been studied extensively, and a range of alternative cut-generation rules has been proposed to mitigate weak cuts and improve separation. \citealt{magnanti1981accelerating} introduce a two-step procedure to obtain Pareto-optimal Benders cuts. \citealt{fischetti2010note} introduce a unified cut-generation framework, which yields both feasibility and optimality cuts through a single separation problem, and admits methodological flexibility in identifying unbounded rays for cut generation through the choice of normalization. Different normalization choices can yield cuts that are deep (\citealt{hosseini2021deepest}) and facet-defining or Pareto-optimal (\citealt{brandenberg2021refined}). \citealt{fullnernew} extend the work of \citealt{fischetti2010note} to the Lagrangian dual of the mixed-integer subproblems to obtain Lagrangian cuts with desirable properties.

For more general duals—such as the ReLU dual and the lifted Lagrangian dual—\citealt{deng2024relu,yangyang2025} propose mitigation strategies that extend the ideas of \citealt{magnanti1981accelerating} to their formulations. These strategies can be viewed as dual regularization: they seek to characterize the set of all optimal dual solutions and then select those that yield stronger cuts by optimizing a regularized objective over this set. The modified objective includes additional terms involving the dual variables, weighted by appropriately chosen objective coefficients. In particular, \citealt{deng2024relu} approximate the set of optimal dual solutions using a linear program. They derive objective coefficients so that the regularized dual remains bounded. The cut-generating LP is computationally efficient but often results in weak cuts. In contrast, \citealt{yangyang2025} characterize the set of optimal dual solutions exactly using a convex program. They propose objective coefficients that lead to cuts that are both tight and Pareto-optimal in the lifted space.

In this work, we propose an alternative approach to mitigating weak cuts resulting from dual degeneracy in methods designed for mixed-integer state variables. Our approach is based on normalization of the ReLU dual. This approach was introduced by \citealt{fullnernew} for the Lagrangian dual obtained by relaxing copy constraints $z_n = x_{a(n)}$ in MIP subproblems. In the normalized dual, additional constraints are imposed on the dual variables, which are shown to address the weakness of the original Lagrangian cuts arising from dual degeneracy. However, the original Lagrangian cuts, based on relaxing original copy constraints $z_n = x_{a(n)}$, only guarantee convergence when the state variables are binary (\citealt{zou2019stochastic}). We extend the concept of normalization to the ReLU-based dual setting to ensure convergence with mixed-integer state variables and to obtain strong cuts.

We focus on the ReLU dual rather than the lifted dual of \citealt{yangyang2025} because the latter introduces binary variables directly into the state space. This can lead to a significant expansion of the state space, substantially increasing the computational effort required to solve the lifted Lagrangian dual and generate the corresponding cuts. In contrast, the ReLU dual introduces auxiliary binary variables only as local variables to the subproblem, keeping the number of dual variables bounded by twice the state dimension in each iteration. This makes solving the dual significantly more efficient than the lifted approach. In our computational study, we observe that the time to solve the dual and the average number of dual iterations both increase with the state dimension. 
We also study the relationship between normalization and regularization and explain why normalization offers important advantages. In particular, we show that normalization can yield tight, Pareto-optimal cuts. To this end, we introduce a notion of Pareto-optimality defined in the original state space for non-linear ReLU cuts and prove that normalization yields Pareto-optimal cuts under this definition. Importantly, normalization provides additional flexibility: cuts need not be both tight and Pareto-optimal simultaneously. By appropriately choosing the normalization coefficients, one may enforce both properties, but this is not required. In contrast, the enhancement strategy proposed by \citealt{yangyang2025} always produces cuts that are both tight and Pareto-optimal. This lack of flexibility can lead to weaker overall approximations of the value function, a behavior that we also illustrate in our computational experiments.

The main contributions of this paper are as follows:
\begin{enumerate}
    \item We extend the normalization framework of \citealt{fullnernew} to the ReLU-based Lagrangian dual of \citealt{deng2024relu}. This extension ensures asymptotic convergence for multistage stochastic integer programs with mixed-integer state variables while producing strong cuts.

    \item We show that normalization resolves the issue of weak cuts resulting from degeneracy in the ReLU dual and identifies solutions that yield strong cuts. The strength of the cuts is established from two perspectives:
    \begin{enumerate}[label=(\alph*)]
        \item We introduce a notion of Pareto-optimality defined in the original state space for nonlinear ReLU cuts, and prove that normalization produces Pareto-optimal cuts under this definition.
        \item We prove that there exists a choice of normalization coefficients that yields tight cuts at the current incumbent. Our existence result also provides insights into selecting these coefficients to obtain tight cuts.
    \end{enumerate}

    \item We analyze the relationship between normalization and regularization for obtaining strong cuts. We show that any cut obtained by regularization can also be obtained by normalization, though the converse is not true. While we establish this relationship in the context of the ReLU dual, it has broader implications. For classical Benders cuts, previous work (\citealt{brandenberg2021refined, hosseini2021deepest}) shows that both normalization and the regularization framework of \citealt{magnanti1981accelerating} can attain Pareto-optimal cuts. However, when multiple such cuts exist, it is unclear whether normalization can attain the same cut as regularization. Our result resolves this question and proves a stronger claim: with an appropriate choice of normalization constraints, the coefficients of any Pareto-optimal cut obtained via a regularization method are also optimal in the normalization dual.

    % \item We introduce valid inequalities that strengthen the mixed-integer representation of ReLU cuts.

    \item Finally, we provide a computational comparison of normalization and regularization approaches, showing that normalization consistently yields stronger cuts, while computational times improve for harder instances that cannot be effectively approximated by the weaker Benders cuts. We also make our code publicly available. To the best of our knowledge, it is the first open-source implementation of a cut-generation method with (asymptotic) convergence guarantees for general MSIPs. The code supports strong cut generation via both normalization and regularization, among other enhancements such as the alternating cut strategy of \citealt{angulo2016improving}.
\end{enumerate}

The remainder of this paper is organized as follows. In Section \ref{sec:relu-norm}, we introduce normalization of the ReLU dual and prove that the resulting cuts are tight and Pareto-optimal. Section~\ref{sec:norm-vs-reg} revisits recently proposed regularization-based approaches and discusses their connection to normalization. Section \ref{sec:comp} presents a computational comparison of normalization and regularization across two classes of problems. Finally, Section~\ref{sec:conclusion} summarizes our contributions and discusses directions for future work.

\section{Normalization of the Dual Formulation} \label{sec:relu-norm}

\subsection{ReLU Dual and ReLU Cuts} \label{sec:relu_dual_cuts}

We first review the ReLU dual and the associated ReLU cuts introduced by \citealt{deng2024relu}. Given an incumbent solution \(\hat{x}_{a(n)}\), cuts are obtained by relaxing the copy constraints \eqref{sub_reform:copy}, which leads to the following Lagrangian relaxation:
\begin{align} \label{relu:lagrn_relax}
      \mc{L}^R_n(\pi_n^+, \pi_n^-; \hat{x}_{a(n)}) := 
      \min_{z_n \in Z_{a(n)}}  
       \ul{Q}_n(z_n) + \sum_{k \in [d_{a(n)}]} \pi_{nk}^+ (z_{nk} - \hat{x}_{a(n),k})^+ 
       + \sum_{k \in [d_{a(n)}]} \pi_{nk}^- (z_{nk} - \hat{x}_{a(n),k})^- ,
\end{align}
where the approximate value function $\ul{Q}_n$ is obtained by replacing $\epi(Q_m)$ in \eqref{sub_reform2:epi} and 
\eqref{sub_reform:orig_constr} with an approximation $\Psi_m$ iteratively obtained from the cut-generation process.
The corresponding Lagrangian dual problem is given by:
\begin{align} \label{relu:dual}
    \max_{\pi_n^+, \pi_n^- \in \mathbb{R}^{d_{a(n)}}} 
    \mc{L}^R_n(\pi_n^+, \pi_n^-; \hat{x}_{a(n)}).
\end{align}
We refer to this problem as the ReLU dual and use the superscript $R$ to denote expressions associated with this formulation. 

For any choice of dual multipliers \(\pi_n^+, \pi_n^- \in \mathbb{R}^{d_{a(n)}}\), a ReLU cut that is valid for the epigraph $\epi_{Z_{a(n)}}(Q_n)$ is given by
\begin{align} \label{cut:ReLU}
        \theta_n \ge 
        \mc{L}^R_n(\pi_n^+, \pi_n^-; \hat{x}_{a(n)}) 
        - \sum_{k \in [d_{a(n)}]} \pi_{nk}^+ (z_{nk} - \hat{x}_{a(n),k})^+ 
        - \sum_{k \in [d_{a(n)}]} \pi_{nk}^- (z_{nk} - \hat{x}_{a(n),k})^-.
\end{align}
\citealt{deng2024relu} show that strong duality holds for the ReLU dual \eqref{relu:dual} under standard assumptions. Strong duality guarantees that a cut generated at the current incumbent is tight. A key advantage of the ReLU-based copy constraints is that they enable the construction of tight cuts even when the state variables are mixed-integer. As a result, the approach ensures asymptotic convergence for general mixed-integer stochastic programs with mixed-integer state variables.

The authors further discuss a linear reformulation of the ReLU cut \eqref{cut:ReLU}. This reformulation models the ReLU functions \((z_{nk} - x_{a(n),k})^+\) and \((z_{nk} - x_{a(n),k})^-\) implicitly through auxiliary continuous variables $w_{nk}^+, w_{nk}^-$ and binary variables $r_{nk}$. In particular, given known bounds on the state variables $x_{nk} \in [0,B_k]$, the ReLU cut can be expressed as the following MIP formulation:
\begin{subequations}\label{cut:relu_linear}
\begin{align*}
    &\theta_n \ge \mc{L}^R_n(\pi_n^+, \pi_n^-; \hat{x}_{a(n)}) 
    - \sum_{k \in [d_{a(n)}]} \pi_{nk}^+ w_{nk}^+ 
    - \sum_{k \in [d_{a(n)}]} \pi_{nk}^- w_{nk}^-\autotag \label{relu_linear:cut} \\
    &w_{nk}^+ - w_{nk}^- = z_{nk} - \hat{x}_{a(n), k}, 
    \quad &\forall k \in [d_{a(n)}],  \autotag \label{relu_linear:equal} \\
    &0 \le w_{nk}^+ \le (B_k - \hat{x}_{a(n),k}) r_{nk}, 
    \quad &\forall k \in [d_{a(n)}], \autotag \label{relu_linear:plus} \\
    &0 \le w_{nk}^- \le \hat{x}_{a(n),k} (1 - r_{nk}), 
    \quad &\forall k \in [d_{a(n)}], \autotag \label{relu_linear:minus} \\
    &r_n \in \{0,1\}^{d_{a(n)}}. \autotag \label{relu_linear:binary}
\end{align*}
\end{subequations}
This reformulation reveals an important connection between ReLU cuts and the original Lagrangian cuts in a lifted space, which we discuss in the next section.

For the rest of the paper, we make the following basic assumptions. First, we assume that sets $X_n$ and $Y_n$ are compact, and all problem data are rational. This ensures that feasible region $H_n(x_{a(n)}) \cap (X_n \times Y_n)$ is compact for all $x_{a(n)} \in \mb{R}^{d_{a(n)}}$. Moreover, under this assumption, the set $X_n$ can be shifted such that all state variables satisfy $x_{nk} \in [0, B_k]$. The data are assumed to be rational to ensure finite MIP-representations and because the convex hull of any MIP-representable set defined by rational data is a rational polyhedron (\citealt{meyer1974existence}). Second, we assume that the domain of the value function satisfies  $\dom(Q_n) = Z_{a(n)} = \prod_{k} [0, B_k]$. Together with \eqref{sub_reform2:copy_space}, this property guarantees relatively complete recourse, a standard assumption in stochastic programming. This requirement can always be enforced by adding continuous variables to polyhedron $H_n$ and penalizing them in the objective. The set $Z_{a(n)}$ is typically taken as a superset of $X_{a(n)}$ to reduce the computational effort in solving the dual problem, but this choice affects the strength of the resulting cuts. For example, in \citealt{zou2019stochastic}, binary restrictions on state variables are relaxed to the interval $[0, 1]$. We refer the reader to \citealt{fullner2024lipschitz} for a detailed discussion of the various options for choosing $Z_{a(n)}$ and their implications.

\subsection{Connection with the Original Lagrangian Cuts} \label{sec:connection}

To establish the connection between ReLU cuts and the original Lagrangian cuts, we lift the domain $Z_{a(n)}$ of the value function $Q_n$ to a higher-dimensional space. Given an incumbent solution $\hat{x}_{a(n)}$, we define the lifted domain as:
\begin{align} \label{def:lifted_domain}
Z^{lift}_{\hat{x}_{a(n)}} = \br{(w_n^+, w_n^-): \exists z_n \in Z_{a(n)} \text{ and } \exists r_{n} \in \br{0, 1}^{d_{a(n)}} \text{ s.t. } \eqref{relu_linear:equal}-\eqref{relu_linear:minus}}.
\end{align}
This lifted space corresponds to the auxiliary variables $(w_n^+, w_n^-)$ used in the linear reformulation \eqref{cut:relu_linear} of the ReLU cut. On this lifted space, we redefine the recourse function as:
\begin{align} \label{prob:lift_rec}
\un{Q}'_n(w_n^+, w_n^-; \hat{x}_{a(n)}) := \un{Q}_n(\hat{x}_{a(n)} + w_n^+ - w_n^-).
\end{align}
Note that $\un{Q}'_n(\mbf{0}, \mbf{0}; \hat{x}_{a(n)}) = \un{Q}_n(\hat{x}_{a(n)})$, which allows us to reformulate the subproblem $\un{Q}'_n(\mbf{0}, \mbf{0}; \hat{x}_{a(n)})$ with the standard copy constraints:
\begin{align*}
    \ul{Q}'_n(\mbf{0}, \mbf{0}; \hat{x}_{a(n)}) = \min_{w_n^+, w_n^-} \quad &\ul{Q}'_n(w_n^+, w_n^-; \hat{x}_{a(n)}) \\
    \text{s.t.} \quad & (w_n^+, w_n^-) = (\mbf{0}, \mbf{0}), \autotag \label{lifted_sub:copy} \\
    & (w_n^+, w_n^-) \in Z_{\hat{x}_{a(n)}}^{lift}.
\end{align*}
By relaxing the copy constraints \eqref{lifted_sub:copy}, we obtain the following Lagrangian relaxation:
\begin{align*}
    \mc{L}^O_n(\pi_n^+, \pi_n^-; \hat{x}_{a(n)}) := \min_{w_n^+, w_n^-} \quad &\ul{Q}'_n(w_n^+, w_n^-; \hat{x}_{a(n)}) + (\pi_n^+, \pi_n^-) \cdot  \begin{pmatrix} w_n^+ \\ w_n^-\end{pmatrix} \autotag \label{prob:lifted_relax}\\
    \text{s.t.} \quad & (w_n^+, w_n^-) \in Z_{\hat{x}_{a(n)}}^{lift}. 
\end{align*}
The superscript $O$ in $\mc{L}_n^{O}$ denotes the original Lagrangian relaxation obtained by relaxing the copy constraints \eqref{lifted_sub:copy}. The corresponding Lagrangian dual problem is:
\begin{align}\label{dual:lifted_lagrn_00}
    \max_{\pi_n^+, \pi_n^-} \mc{L}^O_n(\pi_n^+, \pi_n^-; \hat{x}_{a(n)})  - (\pi_n^+, \pi_n^-) \cdot  \begin{pmatrix} \mbf{0} \\ \mbf{0} \end{pmatrix}.
\end{align}
For any dual multipliers $(\pi_n^+, \pi_n^-)$, the resulting Lagrangian cut takes the form:
\begin{align} \label{cut:lifted_lagrn_00} 
    \theta_n \geq \mc{L}^O_n(\pi_n^+, \pi_n^-; \hat{x}_{a(n)}) - (\pi_n^+, \pi_n^-) \cdot  \begin{pmatrix} w_n^+ \\ w_n^-\end{pmatrix}.
\end{align}
This transformation establishes the following key relationship:
\begin{proposition}[\citealt{deng2024relu}] \label{prop:lifted_lagrangian}
The ReLU Lagrangian cut \eqref{cut:ReLU}, 
generated at $\hat{x}_{a(n)}$ 
for $\epi_{Z_{a(n)}}\left(\un{Q}_n\right)$, 
corresponds to Lagrangian cut \eqref{cut:lifted_lagrn_00} generated at $\left(\mathbf{0}, \mathbf{0}\right)$ for the lifted set $\epi_{Z^{lift}_{\hat{x}_{a(n)}}} \left(\ul{Q}'_n(\cdot, \cdot; \hat{x}_{a(n)})\right)$. 
Moreover, with optimal dual multipliers obtained by solving the ReLU dual \eqref{relu:dual}, the Lagrangian cut \eqref{cut:lifted_lagrn_00} is tight at $\left(\mathbf{0}, \mathbf{0}\right)$ with respect to the lifted value function $\ul{Q}_n'(\cdot, \cdot; \hat{x}_{a(n)})$.
\end{proposition}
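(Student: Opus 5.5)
The plan is to show that the two Lagrangian relaxations coincide, $\mc{L}^O_n(\pi_n^+,\pi_n^-;\hat{x}_{a(n)}) = \mc{L}^R_n(\pi_n^+,\pi_n^-;\hat{x}_{a(n)})$ for every $(\pi_n^+,\pi_n^-)$. Once this is established, the cut correspondence and the tightness statement follow with little extra work.

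\textbf{Step 1: the two relaxations have the same value.} The key observation concerns the lifted domain. For each $z_n\in Z_{a(n)}=\prod_k[0,B_k]$, the vector $(w_n^+,w_n^-)=((z_n-\hat{x}_{a(n)})^+,(z_n-\hat{x}_{a(n)})^-)$ lies in $Z^{lift}_{\hat{x}_{a(n)}}$. To see this, take $r_{nk}=1$ when $z_{nk}>\hat{x}_{a(n),k}$ and $r_{nk}=0$ otherwise; the bounds \eqref{relu_linear:plus}--\eqref{relu_linear:minus} then hold because $0\le z_{nk}\le B_k$. Conversely, every $(w_n^+,w_n^-)\in Z^{lift}_{\hat{x}_{a(n)}}$ satisfies $w_{nk}^+w_{nk}^-=0$, since the binary $r_{nk}$ forces one of the two to vanish. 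With $z_n=\hat{x}_{a(n)}+w_n^+-w_n^-$, this gives $w_n^\pm=(z_n-\hat{x}_{a(n)})^\pm$ exactly, and the constraints place $z_n$ in $\prod_k[0,B_k]=Z_{a(n)}$.

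Hence the map $z_n\mapsto((z_n-\hat{x})^+,(z_n-\hat{x})^-)$ is a bijection from $Z_{a(n)}$ onto $Z^{lift}_{\hat{x}_{a(n)}}$, with inverse $(w^+,w^-)\mapsto \hat{x}+w^+-w^-$. By \eqref{prob:lift_rec}, the objective of \eqref{prob:lifted_relax} evaluated at the image of $z_n$ equals the objective of \eqref{relu:lagrn_relax} at $z_n$. The two minima therefore coincide.

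\textbf{Step 2: the cuts correspond.} Substituting $w_n^\pm=(z_n-\hat{x}_{a(n)})^\pm$ into \eqref{cut:lifted_lagrn_00} and using $\mc{L}^O_n=\mc{L}^R_n$ reproduces \eqref{cut:ReLU} exactly. Conversely, \eqref{cut:lifted_lagrn_00} restricted to $Z^{lift}_{\hat{x}_{a(n)}}$ is the pullback of \eqref{cut:ReLU} under the bijection. So the two cuts describe the same constraint on the epigraphs related by the change of variables. Validity in the lifted space is the standard weak-duality argument: for any feasible $(w^+,w^-)$, the Lagrangian lower bound gives $\ul{Q}'_n(w^+,w^-)\ge \mc{L}^O_n-(\pi^+,\pi^-)\cdot(w^+,w^-)$.

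\textbf{Step 3: tightness.} Let $(\pi_n^+,\pi_n^-)$ be optimal for \eqref{relu:dual}. Strong duality for the ReLU dual, the result of \citealt{deng2024relu} cited above, gives $\mc{L}^R_n(\pi_n^+,\pi_n^-;\hat{x}_{a(n)})=\ul{Q}_n(\hat{x}_{a(n)})$. Evaluating \eqref{cut:lifted_lagrn_00} at $(\mbf{0},\mbf{0})$ gives the right-hand side $\mc{L}^O_n=\mc{L}^R_n=\ul{Q}_n(\hat{x}_{a(n)})=\ul{Q}'_n(\mbf{0},\mbf{0};\hat{x}_{a(n)})$, which is tightness. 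As a byproduct, the same multipliers are also optimal for \eqref{dual:lifted_lagrn_00}, because the two dual objectives coincide pointwise.

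The main obstacle is Step 1, specifically the converse direction: showing that no lifted point has both $w_{nk}^+>0$ and $w_{nk}^->0$. Such a point could evaluate $\ul{Q}_n$ at the right $z_n$ but with a different penalty, which would break the equality of the two relaxations. The binary variable $r_{nk}$ in \eqref{relu_linear:plus}--\eqref{relu_linear:minus} excludes these points. Without it, for instance under a continuous relaxation of $r_{nk}$, the identity $\mc{L}^O_n=\mc{L}^R_n$ could fail.
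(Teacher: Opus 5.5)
Your proposal is correct and follows the same route as the paper's proof in Appendix~\ref{app:lifted_lagrangian}. Like the paper, you identify $\mc{L}^O_n = \mc{L}^R_n$ via the linearization \eqref{cut:relu_linear}, read off the cut correspondence, and then use strong duality of the ReLU dual to get $\mc{L}^O_n = \ul{Q}_n(\hat{x}_{a(n)}) = \ul{Q}'_n(\mbf{0},\mbf{0};\hat{x}_{a(n)})$. Your explicit check that $z_n \mapsto ((z_n-\hat{x}_{a(n)})^+,(z_n-\hat{x}_{a(n)})^-)$ is a bijection onto $Z^{lift}_{\hat{x}_{a(n)}}$, with the binary $r_{nk}$ enforcing $w_{nk}^+ w_{nk}^- = 0$, supplies the detail that the paper leaves implicit in the phrase ``by substituting the constraints.''
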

Although this result is stated without proof in the original paper, we provide a proof in Appendix~\ref{app:lifted_lagrangian} for completeness. The main takeaway of Proposition \ref{prop:lifted_lagrangian} is that any optimal solution of the ReLU dual \eqref{relu:dual} is also optimal for the lifted Lagrangian dual \eqref{dual:lifted_lagrn_00} and vice-versa. This means that properties of the original Lagrangian cuts can be applied to the transformed problem $\ul{Q}'_n$ over the lifted domain $Z_{\hat{x}_{a(n)}}^{lift}$.

\subsection{Normalized ReLU Dual and Normalized ReLU cuts}

Proposition \ref{prop:lifted_lagrangian} enables us to apply the normalization procedure of \citealt{fullnernew} to the transformed subproblem $\ul{Q}_n'$ in \eqref{prob:lift_rec}, domain $Z_{\hat{x}_{a(n)}}^{lift}$, and incumbent $(\mbf{0}, \mbf{0})$. More precisely, given incumbent solution $(\mbf{0}, \mbf{0})$ to subproblem $\ul{Q}_n'$ and approximation $\hat{\theta}_n$ of the subproblem cost, we consider the following feasibility version of the subproblem $\ul{Q}_n'(\mbf{0}, \mbf{0}; \hat{x}_{a(n)})$:
\begin{subequations} \label{prob:feas}
\begin{align}
    \ul{v}_n ((\mbf{0}, \mbf{0}), \hat{\theta}_n) := \min \quad &0 \notag \\
    & \ul{Q}_n'(w_n^+, w_n^-; \hat{x}_{a(n)}) \leq \hat{\theta}_n  \label{feas:theta} \\
    & (w_n^+ , w_n^-) = (\mbf{0}, \mbf{0}) \label{feas:copy} \\
    & (w_n^+, w_n^-) \in Z_{\hat{x}_{a(n)}}^{lift}. \notag
\end{align}
\end{subequations}
Next, a normalized dual is obtained by dualizing the constraints \eqref{feas:theta} and \eqref{feas:copy} with the corresponding dual variables $\pi_{n0} \in \mb{R}$ and $\pi_n^+, \pi_n^- \in \mb{R}^{d_{a(n)}}$, respectively, and adding a constraint normalizing the dual variables. In particular, consider the following dual problem:
\begin{subequations}\label{norm:dual}
    \begin{align*}
        \ul{v}_n^{ND}((\mbf{0}, \mbf{0}), \hat{\theta}_n) := \max_{\pi_n^+, \pi_n^-, \pi_{n0}} &\left(\mc{L}_n(\pi_n^+, \pi_n^-, \pi_{n0}; \hat{x}_{a(n)}) - (\pi_n^+, \pi_n^-) \pmat{ \mbf{0} \\ \mbf{0}} - \pi_{n0} \hat{\theta}_{n} \right), \autotag \label{norm:obj}\\
        & g_n(\pi_n^+, \pi_n^-, \pi_{n0})  \leq 1, \pi_{n0} \geq 0, \autotag \label{norm:constr}
    \end{align*}
\end{subequations}
where function $g_n(\pi_n^+, \pi_n^-, \pi_{n0})$ in the normalization constraint is of the form $u_n^+ \pi_n^+ + u_n^- \pi_n^- + u_{n0} \pi_{n0}$, where  $u_n^+, u_n^-, u_{n0}$ are given normalization coefficients, and the Lagrangian relaxation ${\mathcal{L}}_n(\pi_n^+, \pi_n^-, \pi_{n0}; \hat{x}_{a(n)})$ is given as follows:
\begin{align*} %\label{norm:relax}
    \mc{L}_n(\pi_n^+, \pi_n^-, \pi_{n0}; \hat{x}_{a(n)}) = \min_{w_n^+, w_n^-} \br{ (\pi_n^+, \pi_n^-) \pmat{w_n^+ \\ w_n^-} + \pi_{n0}\left(\ul{Q}'_n(w_n^+, w_n^-; \hat{x}_{a(n)})\right), (w_n^+, w_n^-) \in Z^{lift}_{\hat{x}_{a(n)}}}.
\end{align*}
In Section~\ref{sec:cut_properties}, we discuss how to choose the normalization coefficients $u_n^+, u_n^-, u_{n0}$ to obtain cuts with desired properties. 

Based on an optimal solution $(\hat{\pi}_n^+, \hat{\pi}_n^-, \hat{\pi}_{n0})$ of the normalized dual \eqref{norm:dual}, the resulting normalized cut is 
\begin{align} \label{norm:cut}
    \hat{\pi}_{n0} \theta_n \geq \mc{L}_n(\hat{\pi}_n^+, \hat{\pi}_n^-, \hat{\pi}_{n0}; \hat{x}_{a(n)}) - (\hat{\pi}_n^+, \hat{\pi}_n^-)\pmat{w_n^+ \\ w_n^-}.
\end{align}
The validity of cut \eqref{norm:cut} for $\epi_{Z_{\hat{x}_{a(n)}}^{lift}}(\ul{Q}_n')$ follows from Lemma 3.7 of \citealt{fullnernew}. It is easy to see that this validity implies that the associated ReLU cut 
\begin{align} \label{cut:lagrn_relu}
    \hat{\pi}_{n0} \theta_n \geq \mc{L}_n(\hat{\pi}_n^+, \hat{\pi}_n^-, \hat{\pi}_{n0}; \hat{x}_{a(n)})
    - \sum_{k \in [d_{a(n)}]} \pi_{nk}^+ (z_{nk} - \hat{x}_{a(n),k})^+
    - \sum_{k \in [d_{a(n)}]} \pi_{nk}^- (z_{nk} - \hat{x}_{a(n),k})^-,
\end{align}
is also valid for the set $\epi_{Z_{a(n)}}(\ul{Q}_n)$. Now, consider the incumbent $(\hat{x}_{a(n)}, \hat{\theta}_n)$ in the original space such that $\hat{\theta}_n < \ul{Q}_n(\hat{x}_{a(n)}) = \ul{Q}_n'(\mbf{0}, \mbf{0}; \hat{x}_{a(n)})$. Then, we seek a cut of the form \eqref{norm:cut} to cut off this incumbent. Our next result establishes that such a cut always exists.

\begin{proposition}\label{prop:norm_cut_existence}
    If $\hat{\theta}_n \geq \ul{Q}_n(\hat{x}_{a(n)}) = \ul{Q}_n'(\mbf{0}, \mbf{0}; \hat{x}_{a(n)})$, then $\ul{v}_n^{ND}((\mbf{0}, \mbf{0}), \hat{\theta}_n) = 0$. Otherwise, there exists a cut of the form \eqref{norm:cut}, such that the incumbent $((\mbf{0}, \mbf{0}), \hat{\theta}_n)$ violates this cut. In the original space, the incumbent $(\hat{x}_{a(n)}, \hat{\theta}_n)$, violates the corresponding ReLU cut of the form \eqref{cut:lagrn_relu}.
\end{proposition}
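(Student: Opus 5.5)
The first case comes from weak duality plus a feasible point; the second from the strong duality of Proposition~\ref{prop:lifted_lagrangian} with an explicit, suitably scaled multiplier triple; the third from the equivalence of the lifted and ReLU forms of the cut.

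\textbf{Case $\hat{\theta}_n \geq \ul{Q}_n(\hat{x}_{a(n)})$.} Two steps are needed.

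First, the objective \eqref{norm:obj} is nonpositive for every feasible $(\pi_n^+,\pi_n^-,\pi_{n0})$ with $\pi_{n0}\ge 0$. Take the point $(w_n^+,w_n^-)=(\mbf{0},\mbf{0})$, which lies in $Z^{lift}_{\hat{x}_{a(n)}}$: choose $z_n=\hat{x}_{a(n)}$ and any $r_n$, since both bounds in \eqref{relu_linear:plus}--\eqref{relu_linear:minus} are nonnegative. Evaluating the minimization defining $\mc{L}_n$ at this point gives
\[
\mc{L}_n(\pi_n^+,\pi_n^-,\pi_{n0};\hat{x}_{a(n)}) \le \pi_{n0}\,\ul{Q}_n(\hat{x}_{a(n)}).
\]
Hence the objective is at most $\pi_{n0}(\ul{Q}_n(\hat{x}_{a(n)})-\hat{\theta}_n)\le 0$.

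Second, the zero vector is feasible for \eqref{norm:constr}, since $g_n(\mbf{0})=0\le 1$, and it attains objective value $0$. Therefore $\ul{v}_n^{ND}((\mbf{0},\mbf{0}),\hat{\theta}_n)=0$.

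\textbf{Case $\hat{\theta}_n < \ul{Q}_n(\hat{x}_{a(n)})$.}

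Let $(\bar\pi_n^+,\bar\pi_n^-)$ be an optimal ReLU dual solution. By Proposition~\ref{prop:lifted_lagrangian}, it satisfies
\[
\mc{L}^O_n(\bar\pi_n^+,\bar\pi_n^-;\hat{x}_{a(n)})=\ul{Q}_n(\hat{x}_{a(n)}).
\]
Set $\pi_{n0}=1$, and observe that $\mc{L}_n(\pi^+,\pi^-,1;\cdot)=\mc{L}^O_n(\pi^+,\pi^-;\cdot)$.

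Now scale: for $t>0$, take $(t\bar\pi_n^+,t\bar\pi_n^-,t)$. By positive homogeneity of $\mc{L}_n$ in all its multipliers jointly, the objective equals
\[
t\bigl(\ul{Q}_n(\hat{x}_{a(n)})-\hat{\theta}_n\bigr)>0.
\]
The normalization constraint is linear, so $g_n(t\bar\pi_n^+,t\bar\pi_n^-,t)=t\,g_n(\bar\pi_n^+,\bar\pi_n^-,1)$. If $g_n(\bar\pi_n^+,\bar\pi_n^-,1)\le 0$, every $t>0$ is feasible. Otherwise, take $t=1/g_n(\bar\pi_n^+,\bar\pi_n^-,1)$. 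In either case we get a feasible point with strictly positive objective, so the dual value is positive (possibly $+\infty$).

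For the violation claim, take any feasible $(\hat\pi_n^+,\hat\pi_n^-,\hat\pi_{n0})$ with positive objective, for instance an optimal one if it exists. Positive objective means exactly
\[
\hat\pi_{n0}\hat{\theta}_n < \mc{L}_n(\hat\pi_n^+,\hat\pi_n^-,\hat\pi_{n0};\hat{x}_{a(n)}) - (\hat\pi_n^+,\hat\pi_n^-)\cdot(\mbf{0},\mbf{0}).
\]
That is, the incumbent $((\mbf{0},\mbf{0}),\hat{\theta}_n)$ violates the cut \eqref{norm:cut}.

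\textbf{Original space.} At $z_n=\hat{x}_{a(n)}$, every ReLU term $(z_{nk}-\hat{x}_{a(n),k})^\pm$ in \eqref{cut:lagrn_relu} vanishes, so that cut reduces to the same inequality evaluated at $w=\mbf{0}$. Hence $(\hat{x}_{a(n)},\hat{\theta}_n)$ violates \eqref{cut:lagrn_relu}.

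\textbf{Main obstacle.} The delicate step is the existence and attainment of the dual optimum. Depending on the sign of $g_n$ along the ray through $(\bar\pi_n^+,\bar\pi_n^-,1)$, the normalized dual may be unbounded. I would therefore state the second conclusion in terms of existence of a violating cut from some feasible multiplier triple, as the proposition phrases it, rather than requiring optimality.
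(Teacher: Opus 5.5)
Your proof is correct, but it takes a different route from the paper's.

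\textbf{First case.} The paper places $((\mbf{0},\mbf{0}),\hat\theta_n)$ in the epigraph of $\ol{\co}(\ul{Q}_n')$ and then invokes Lemma~3.14 of \citealt{fullnernew}. You instead plug $w=\mbf{0}$ into $\mc{L}_n$ to get the weak-duality bound $\mc{L}_n(\pi_n^+,\pi_n^-,\pi_{n0};\hat{x}_{a(n)})\le \pi_{n0}\,\ul{Q}_n(\hat{x}_{a(n)})$, and combine it with feasibility of the zero multiplier. Your argument is self-contained and needs only $g_n(\mbf{0})=0$.

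\textbf{Second case.} The paper shows $\ol{\co}(\ul{Q}_n')(\mbf{0},\mbf{0})=\ul{Q}_n'(\mbf{0},\mbf{0};\hat{x}_{a(n)})$, using that $(\mbf{0},\mbf{0})$ is an extreme point of $Z^{lift}_{\hat{x}_{a(n)}}\subseteq\mathbb{R}^{2d_{a(n)}}_+$ together with Theorem~3.13 of \citealt{fullner2024lipschitz}. It then relies on Theorem~3.5 and Remark~3.13 of \citealt{fullnernew} for the existence of a separating normalized cut. You take the equivalent fact as given: strong duality of the ReLU dual, as recorded in Proposition~\ref{prop:lifted_lagrangian}. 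You then build the separating multiplier explicitly by rescaling $(\bar\pi_n^+,\bar\pi_n^-,1)$, using positive homogeneity of $\mc{L}_n$ and linearity of $g_n$.

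\textbf{What each route buys.} The paper's route in effect re-derives strong duality from the geometry of the lifted domain. It also keeps the argument inside the closed-convex-envelope framework that it reuses in Propositions~\ref{prop:fullner_pareto}--\ref{prop:tight_relu}. Yours is more elementary, and it makes explicit that the normalized dual may be unbounded for an arbitrary linear $g_n$; the paper leaves that point to the cited results.

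\textbf{Connecting to optimal solutions.} The cut \eqref{norm:cut} is defined from an optimal solution, so it helps to close this loop. If the normalized dual is bounded, it is an LP (Lemma~3.21 of \citealt{fullnernew}, used in the proof of Proposition~\ref{prop:tight_relu}). So an optimum exists, and its objective is at least the positive value you exhibited; hence the optimal cut also separates the incumbent. In the unbounded case there is no optimal cut at all, so your feasible-triple phrasing is the appropriate one.
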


\begin{proof}
We prove the two statements separately. First, we suppose that $\hat{\theta}_n \geq \ul{Q}'_n(\mbf{0}, \mbf{0}; \hat{x}_{a(n)})$ and show that $\ul{v}_n^{ND}((\mbf{0}, \mbf{0}), \hat{\theta}_n) = 0$. Let $\ol{\co}(f)$ denote the closed convex envelop of function $f$ and $\ol{\co}(f(y))$ denote the function $\ol{\co}(f)$ evaluated at $y$.

Since the closed convex envelope satisfies $\ol{\co}\!\left(\ul{Q}'_n((\mbf{0}, \mbf{0}); \hat{x}_{a(n)})\right) \leq \ul{Q}'_n(\mbf{0}, \mbf{0}; \hat{x}_{a(n)})$, we have $((\mbf{0}, \mbf{0}), \hat{\theta}_n) \in \epi_{Z^{lift}_{\hat{x}_{a(n)}}}\!\left(\ol{\co}\!\left(\ul{Q}'_n(\cdot, \cdot; \hat{x}_{a(n)})\right)\right)$.

Now, note that the normalization function $g_n$ of the form $u_n^+ \pi_n^+ + u_n^- \pi_n^- + u_{n0} \pi_{n0}$ is homogeneous. Therefore, by Lemma 3.14 of \citealt{fullnernew}, we obtain $\ul{v}_n^{ND}((\mbf{0}, \mbf{0}), \hat{\theta}_n) = 0$.  

Next, suppose that $\hat{\theta}_n < \ul{Q}_n(\hat{x}_{a(n)}) = \ul{Q}'_n(\mbf{0}, \mbf{0}; \hat{x}_{a(n)})$.
By Theorem 3.5 and Remark 3.13 of \citealt{fullnernew}, it suffices to show that
$\hat{\theta}_n < \ol{\co}\!\left(\ul{Q}'_n((\mbf{0}, \mbf{0}); \hat{x}_{a(n)})\right)$,
because then there exists a normalized Lagrangian cut of the form \eqref{norm:cut} that separates the incumbent.

To this end, Theorem 3.13 of \citealt{fullner2024lipschitz} implies that
$\ol{\co}\!\left(\ul{Q}'_n((\mbf{0}, \mbf{0}); \hat{x}_{a(n)})\right) = \ul{Q}'_n(\mbf{0}, \mbf{0}; \hat{x}_{a(n)})$
whenever $(\mbf{0}, \mbf{0})$ is an extreme point of $Z^{lift}_{\hat{x}_{a(n)}}$.
We now verify that $(\mbf{0}, \mbf{0})$ is an extreme point. Let $\lambda \in (0,1)$ and let $(v_1^+, v_1^-), (v_2^+, v_2^-) \in Z^{lift}_{\hat{x}_{a(n)}}$ satisfy
$\lambda (v_1^+, v_1^-) + (1-\lambda)(v_2^+, v_2^-) = (\mbf{0}, \mbf{0})$.
With known bounds on state variable $x_{nk} \in [0, B_k], k \in [d_{a(n)}]$, the domain $Z^{lift}_{\hat{x}_{a(n)}} \subseteq \mathbb{R}_+^{d_{a(n)}} \times \mathbb{R}_+^{d_{a(n)}}$, so all components of $v_1^+, v_1^-, v_2^+, v_2^-$ are nonnegative. The above convex-combination equality therefore forces $(v_1^+, v_1^-) = (v_2^+, v_2^-) = (\mbf{0}, \mbf{0})$ proving that $(\mbf{0}, \mbf{0})$ is an extreme point of $Z^{lift}_{\hat{x}_{a(n)}}$.
Consequently,
$\ol{\co}\!\left(\ul{Q}'_n((\mbf{0}, \mbf{0}); \hat{x}_{a(n)})\right) = \ul{Q}'_n(\mbf{0}, \mbf{0}; \hat{x}_{a(n)})$
and since $\hat{\theta}_n$ is strictly smaller than this value, a separating normalized cut exists.

Using linear reformulation in \eqref{relu_linear:equal}-\eqref{relu_linear:binary}, this translates to a ReLU cut of the form \eqref{cut:lagrn_relu} that cuts-off the incumbent $(\hat{x}_{a(n)}, \hat{\theta}_n)$.
\end{proof}

\ignore{We present the proof in Appendix~\ref{app:proof_norm_cut_existence}.} 

 In \citealt{fullnernew}, the existence of a separating normalized Lagrangian cut is established under the condition $(\hat{x}_{a(n)}, \hat{\theta}_n) \notin \epi_{Z_{a(n)}}\!\left(\ol{\co}(\ul{Q}_n)\right)$. Proposition \ref{prop:norm_cut_existence} strengthens this by showing that, a normalized cut of the form \eqref{cut:lagrn_relu} exists whenever $(\hat{x}_{a(n)}, \hat{\theta}_n) \notin \epi_{Z_{a(n)}}(\ul{Q}_n)$. Consequently, normalized ReLU cuts preserve the separation property needed for asymptotic convergence.

\subsection{Normalization Coefficients and their Impact on Cut Quality}\label{sec:cut_properties}

In this section, we discuss how to choose the normalization coefficients $u_n^+, u_n^-$, and $u_{n0}$ in the normalization constraint \eqref{norm:constr}. This choice impacts cut quality, which we evaluate through two properties: Pareto-optimality and tightness at the incumbent. We first study these properties in the extended space and then translate the results back to the original state space.

\subsubsection{Pareto-optimal Cuts}

The concept of Pareto-optimal cuts is introduced in \citealt{magnanti1981accelerating} for affine cuts.
\begin{definition}[Pareto-optimal affine cut]
\label{def:pareto_optimal}
A cut of the form $\theta_n \ge \ell^1 - (\pi^1)^{\top} x_{a(n)}$ dominates the cut $\theta_n \ge \ell^2 - (\pi^2)^{\top} x_{a(n)}$ if
$\ell^1 - (\pi^1)^{\top} x_{a(n)} \ge \ell^2 - (\pi^2)^{\top} x_{a(n)}$ for all $x_{a(n)} \in X_{a(n)}$, with strict inequality for at least one point in $X_{a(n)}$.
A valid cut is Pareto-optimal for reference set $\epi_{X_{a(n)}}(\ul{Q}_n)$ if no other valid cut dominates it.
\end{definition}

Pareto-optimality is always defined relative to a reference set. Importantly, while establishing Pareto-optimality on a larger set such as $\epi_{\conv(X_{a(n)})}(\ol{\co}(\ul{Q}_n))$ may be easier, it does not guarantee Pareto-optimality on the original epigraph $\epi_{X_{a(n)}}(\ul{Q}_n)$. We refer the reader to Figure 3.4 and Figure 3.5 of \citealt{stursberg2019mathematics} for an example depicting the importance of the reference set in the definition of the Pareto-optimal cuts.

We now apply this concept to the normalized (and linear) cut \eqref{norm:cut} in the extended space. Our goal is to identify the choice of normalization coefficients $u_n^+, u_n^-, u_{n0}$ in constraint \eqref{norm:constr} that yields Pareto-optimal cuts. The following result provides this characterization. Throughout, $\relint(\cdot)$ denotes the relative interior of a set.
\begin{proposition} \label{prop:fullner_pareto}
For all $(u_n^+, u_n^-, u_{n0}) \in \relint\cbr{\epi(\ol{\co}(\ul{Q}_n'(\cdot, \cdot; \hat{x}_{a(n)}))) - (\mbf{0}, \mbf{0}, \hat{\theta}_n)}$, any optimal solution $(\hat{\pi}_n^+, \hat{\pi}_n^-, \hat{\pi}_{n0})$ to the normalized dual \eqref{norm:dual} with $\hat{\pi}_{n0} > 0$ defines a Pareto-optimal cut of form \eqref{norm:cut} for the reference set $\epi(\ol{\co}(\ul{Q}_n'(\cdot, \cdot; \hat{x}_{a(n)})))$ on $\conv(Z_{\hat{x}_{a(n)}}^{lift})$. 
\end{proposition}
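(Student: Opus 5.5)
The plan is to reduce the statement to the Pareto-optimality result of \citealt{fullnernew} for normalized Lagrangian cuts. We apply it to the lifted subproblem $\ul{Q}_n'(\cdot,\cdot;\hat{x}_{a(n)})$ on the domain $Z^{lift}_{\hat{x}_{a(n)}}$ with incumbent $((\mbf{0},\mbf{0}),\hat{\theta}_n)$. By Proposition~\ref{prop:lifted_lagrangian} and the construction of \eqref{norm:dual}, the normalized dual is exactly the normalized Lagrangian dual of \citealt{fullnernew} for this lifted problem. The lifted problem has linear copy constraints $(w_n^+,w_n^-)=(\mbf{0},\mbf{0})$, a compact domain (since the $w$'s are bounded by $B_k$), and a lower semicontinuous value function. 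So we first check that all standing assumptions of that framework hold: compactness of $Z^{lift}_{\hat{x}_{a(n)}}$, properness and lower boundedness of $\ul{Q}'_n$, and linearity (hence homogeneity) of $g_n$.

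The argument itself proceeds in three steps. First, use the identity $\mc{L}_n(\pi^+,\pi^-,\pi_0)=\min_{(w,\theta)\in \epi(\ul{Q}'_n)}\{\pi\cdot w+\pi_0\theta\}$ for $\pi_0\ge 0$. The minimum of a linear function over a set equals its minimum over the closed convex hull. Hence $\mc{L}_n$ is the support-type function of $\ol{\conv}(\epi \ul{Q}'_n)=\epi(\ol{\co}(\ul{Q}'_n))$ on $\conv(Z^{lift})$. Consequently, valid cuts for the original lifted epigraph and for this convexified reference set coincide, and the dual objective in \eqref{norm:obj} becomes $\min_{(w,\theta)\in \epi(\ol{\co})}\{\pi\cdot w+\pi_0(\theta-\hat{\theta}_n)\}$.

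Second, run the standard Magnanti--Wong-type argument. Suppose $(\hat{\pi},\hat{\pi}_0)$ with $\hat{\pi}_0>0$ is optimal, and suppose another valid cut, rescaled to the same $\hat{\pi}_0$, dominates it on $\conv(Z^{lift})$. The dominating cut is at least as high everywhere and strictly higher somewhere in the domain. Because both cuts are affine and the reference set is the convex envelope epigraph, the strict inequality propagates to the relative interior: an affine function that is $\ge$ and not identically equal is strictly larger at every relative interior point of the domain's affine hull. Now evaluate the dual objective difference at the normalization point $u=(u^+,u^-,u_0)$. Since $u$ lies in the relative interior of $\epi(\ol{\co})-(\mbf{0},\mbf{0},\hat{\theta}_n)$, duality between the normalization constraint and the objective shows that the dominating multiplier can be rescaled to satisfy $g_n\le 1$ while achieving a strictly larger objective value. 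This contradicts optimality. Equivalently, we can invoke the corresponding theorem of \citealt{fullnernew} (the relative-interior normalization result) directly once the identification above is made.

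The main obstacle is the second step's rescaling, which must respect both the sign of the optimal value and the normalization constraint. If the optimal value is positive, rescaling the dominating cut so that $g_n=1$ must strictly increase the objective. This needs the strict dominance at a relative interior point (the point $u+(\mbf{0},\mbf{0},\hat{\theta}_n)$) to translate into a strict inequality in $g_n$ versus the objective gap. I would handle this carefully by writing a cut's violation at $\hat{\theta}_n$ and its value at $u$ in terms of the same linear functional, and treating separately the case where the dominance is only along the affine hull, where relint rather than interior is essential.
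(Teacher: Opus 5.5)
Your proposal takes the same route as the paper, whose proof consists solely of identifying \eqref{norm:dual} with the normalized Lagrangian dual of \citealt{fullnernew} for the lifted function $\ul{Q}_n'(\cdot,\cdot;\hat{x}_{a(n)})$ on $Z^{lift}_{\hat{x}_{a(n)}}$ at $(\mbf{0},\mbf{0})$ and invoking their Theorem~3.25; your hypothesis checks and Magnanti--Wong sketch unpack that theorem and are sound (strict dominance propagates to the relative-interior point $(u_n^+,u_n^-)$, so the dominating cut has strictly smaller slack at the core point and at least the same violation at the incumbent). One remark on the case you left open: the argument needs a positive optimal value, i.e.\ $\hat{\theta}_n < \ul{Q}_n(\hat{x}_{a(n)})$ as the paper tacitly assumes, because at $\hat{\theta}_n = \ul{Q}_n(\hat{x}_{a(n)})$ the optimal value is $0$ and every valid cut tight at $(\mbf{0},\mbf{0})$ is optimal after rescaling---including dominated ones, such as a tight cut with $\mbf{1}^\top(w_n^+ + w_n^-)$ subtracted from its right-hand side---so that case must be excluded by hypothesis rather than handled.
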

The proof follows immediately from Theorem 3.25 of \citealt{fullnernew}. In the standard literature, normalization coefficients that yield Pareto-optimal cuts or satisfy the requirement in Proposition \ref{prop:fullner_pareto} are referred to as \emph{core points}. While identifying core points remains a challenge and typically requires solving an additional MIP subproblem (\citealt{magnanti1981accelerating,fullnernew,yangyang2025}), we show that when the incumbent solution $\hat{x}_{a(n)}$ satisfies $0 < \hat{x}_{a(n), k} < B_k$ for all $k \in [d_{a(n)}]$, a core point can be constructed efficiently without additional subproblem solves, as shown below. 

\ignore{The proof is deferred to Appendix \ref{app:proof_of_core_point}.}

\begin{proposition} \label{prop:core_point}
    Suppose that the incumbent solution $\hat{x}_{a(n)}$ satisfies $0 < \hat{x}_{a(n), k} < B_k$ for all $k \in [d_{a(n)}]$. Then any point $(u_n^+, u_n^-, u_{n0})$ with 
    \begin{equation} \label{eq:core_point_coefficients}
        (u_{nk}^+ , u_{nk}^-=u_{nk}^+) \in \relint\left(\conv\Bigl\{(0,0),\ \bigl(B_k-\hat{x}_{a(n),k},\,0\bigr),\ \bigl(0,\,\hat{x}_{a(n),k}\bigr)\Bigr\}\right)
    \end{equation}
    for all $k \in [d_{a(n)}]$,
    and $u_{n0} \in (\ul{Q}_n(\hat{x}_{a(n)}) - \hat{\theta}_n,  \infty)$, defines a core point, that is, $(u_n^+, u_n^-, u_{n0}) \in \relint\cbr{\epi(\ol{\co}(\ul{Q}_n'(\cdot, \cdot; \hat{x}_{a(n)}))) - (\mbf{0}, \mbf{0}, \hat{\theta}_n)}$.
\end{proposition}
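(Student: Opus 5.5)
The plan is to describe $\conv(Z^{lift}_{\hat{x}_{a(n)}})$ explicitly and then apply the standard description of the relative interior of the epigraph of a convex function. For that I would use Rockafellar, Lemma~7.3: for a proper convex $f$,
\[
\relint(\epi f)=\{(w,t): w\in\relint(\dom f),\ f(w)<t<\infty\}.
\]

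\textbf{Step 1: the lifted domain.} From \eqref{relu_linear:equal}--\eqref{relu_linear:minus} and $Z_{a(n)}=\prod_k[0,B_k]$, the constraints decouple across $k$. Coordinate $k$ of $Z^{lift}_{\hat{x}_{a(n)}}$ is the union of two segments:
\begin{itemize}
\item $\{(w^+_{nk},0): 0\le w^+_{nk}\le B_k-\hat{x}_{a(n),k}\}$ when $r_{nk}=1$;
\item $\{(0,w^-_{nk}): 0\le w^-_{nk}\le \hat{x}_{a(n),k}\}$ when $r_{nk}=0$.
\end{itemize}
Hence $Z^{lift}_{\hat{x}_{a(n)}}$ is a product over $k$ of these unions. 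Its convex hull is the product of the triangles $T_k=\conv\{(0,0),(B_k-\hat{x}_{a(n),k},0),(0,\hat{x}_{a(n),k})\}$, since the convex hull of a product is the product of the convex hulls. Because $0<\hat{x}_{a(n),k}<B_k$, each $T_k$ is a nondegenerate triangle. Therefore $\conv(Z^{lift}_{\hat{x}_{a(n)}})$ is a full-dimensional polytope in $\mathbb{R}^{2d_{a(n)}}$. Its interior is $\prod_k \relint(T_k)$, which contains $(u_n^+,u_n^-)$ by \eqref{eq:core_point_coefficients}.

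\textbf{Step 2: the convex envelope is finite on that polytope.} Under the compactness and relatively complete recourse assumptions, $\ul{Q}_n$ is finite and bounded on $Z_{a(n)}$. Hence $\ul{Q}'_n(\cdot,\cdot;\hat{x}_{a(n)})$ is finite and bounded on $Z^{lift}_{\hat{x}_{a(n)}}$ by some constant $M$. Its closed convex envelope $F:=\ol{\co}(\ul{Q}'_n)$ is then proper, has domain $\conv(Z^{lift}_{\hat{x}_{a(n)}})$ (closed, since the hull is a polytope), and satisfies $F\le M$ there. Combining with Step~1 and Lemma~7.3, $(u_n^+,u_n^-,u_{n0})$ lies in the shifted relative interior exactly when
\[
F(u_n^+,u_n^-)<\hat{\theta}_n+u_{n0}.
\]

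\textbf{Step 3: the vertical inequality (main obstacle).} The hypothesis $u_{n0}>\ul{Q}_n(\hat{x}_{a(n)})-\hat{\theta}_n$ only gives $\hat{\theta}_n+u_{n0}>\ul{Q}'_n(\mbf{0},\mbf{0};\hat{x}_{a(n)})=F(\mbf{0},\mbf{0})$. Here the last equality uses the extreme-point argument from the proof of Proposition~\ref{prop:norm_cut_existence}. This controls $F$ only at the incumbent, not at $(u_n^+,u_n^-)$.

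To transfer it, I would write $(u_n^+,u_n^-)=(1-s)(\mbf{0},\mbf{0})+s\,v$ with $v$ on the boundary of the polytope. Convexity then gives
\[
F(u_n^+,u_n^-)\le F(\mbf{0},\mbf{0})+s\bigl(M-F(\mbf{0},\mbf{0})\bigr).
\]
This bound suffices only if the gap $u_{n0}-(\ul{Q}_n(\hat{x}_{a(n)})-\hat{\theta}_n)$ dominates $s(M-F(\mbf{0},\mbf{0}))$. In general it need not: $F$ may increase away from the incumbent.

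So the first thing I would try is a rescaling argument. The normalization \eqref{norm:constr} is homogeneous, so scaling $(u_n^+,u_n^-,u_{n0})$ by $\lambda>0$ only rescales the feasible region of \eqref{norm:dual}, and its optimal solutions generate the same cuts \eqref{norm:cut}. I would then aim to show that for a suitable $\lambda$ the scaled point satisfies $F(\lambda u_n^+,\lambda u_n^-)<\hat{\theta}_n+\lambda u_{n0}$.

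If that route fails, the statement most likely needs one of two adjustments:
\begin{itemize}
\item a stronger lower bound on $u_{n0}$, namely $u_{n0}>M-\hat{\theta}_n$, or $u_{n0}>F(u_n^+,u_n^-)-\hat{\theta}_n$;
\item an argument that the relevant comparison is at $(\mbf{0},\mbf{0})$ only.
\end{itemize}
I would check which of these the proof of Theorem~3.25 in \citealt{fullnernew} actually uses before committing.
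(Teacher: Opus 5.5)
Your Steps 1 and 2 match the paper's reduction: the product of the triangles $T_k$, then the characterization of the relative interior of an epigraph. So everything rests on showing $F(u_n^+,u_n^-)<\hat{\theta}_n+u_{n0}$, where $F:=\ol{\co}(\ul{Q}_n'(\cdot,\cdot;\hat{x}_{a(n)}))$.

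The paper settles this with the chain $F(u_n^+,u_n^-)\le \ul{Q}_n'(u_n^+,u_n^-;\hat{x}_{a(n)})=\ul{Q}_n(\hat{x}_{a(n)}+u_n^+-u_n^-)=\ul{Q}_n(\hat{x}_{a(n)})$, using $u_n^+=u_n^-$. Your suspicion in Step 3 is correct, and the first inequality is where it fails. $F$ is the envelope of $\ul{Q}_n'$ restricted to $Z^{lift}_{\hat{x}_{a(n)}}$, so $F\le\ul{Q}_n'$ holds only on that set. The relative-interior condition forces $u_{nk}^+=u_{nk}^->0$, which violates the complementarity $w_{nk}^+w_{nk}^-=0$ enforced by $r_{nk}\in\{0,1\}$. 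Evaluating the formula $\ul{Q}_n(\hat{x}_{a(n)}+w_n^+-w_n^-)$ off $Z^{lift}_{\hat{x}_{a(n)}}$ gives no bound on $F$.

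In fact the proposition is false as stated, so neither rescaling nor any other argument can close Step 3. Here is a counterexample.
\begin{itemize}
\item Take $d_{a(n)}=1$, $Z_{a(n)}=[0,2]$, $\hat{x}_{a(n)}=1$, and the LP value function $\ul{Q}_n(x)=\min\{y_1+y_2: y_1-y_2=x-1,\ y\in[0,1]^2\}=|x-1|$.
\item Then $Z^{lift}_{\hat{x}_{a(n)}}=([0,1]\times\{0\})\cup(\{0\}\times[0,1])$. On this set $\ul{Q}_n'(w^+,w^-)=w^++w^-$ is affine, hence $F(w^+,w^-)=w^++w^-$ on the triangle $\conv\{(0,0),(1,0),(0,1)\}$.
\item Choose $\hat{\theta}_n=-1$, $(u_n^+,u_n^-)=(0.3,0.3)$ and $u_{n0}=1.1>\ul{Q}_n(\hat{x}_{a(n)})-\hat{\theta}_n=1$. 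All hypotheses hold.
\item Yet $\hat{\theta}_n+u_{n0}=0.1<0.6=F(0.3,0.3)$, so the shifted point is not even in the epigraph.
\end{itemize}

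Scaling by $\lambda>0$ fails too. The point $\lambda(0.3,0.3)$ is interior only for $\lambda<5/3$, while $0.6\lambda<-1+1.1\lambda$ requires $\lambda>2$. In any case, a scaled core point would prove a different statement. The same example rules out your second fallback, since membership genuinely requires the comparison at $(u_n^+,u_n^-)$, not at $(\mbf{0},\mbf{0})$.

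Your first fallback is the correct repair. Assume either $u_{n0}>F(u_n^+,u_n^-)-\hat{\theta}_n$, or the checkable sufficient condition $u_{n0}>M-\hat{\theta}_n$ with $M$ an upper bound of $\ul{Q}_n$ on $Z_{a(n)}$. The latter works because $F\le M$ on $\conv(Z^{lift}_{\hat{x}_{a(n)}})$. Under either assumption, your Steps 1--2 finish the proof at once. Both versions need information beyond $\ul{Q}_n(\hat{x}_{a(n)})$, so the claimed advantage of building core points without extra subproblem solves does not survive.
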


\begin{proof}
Consider any coefficients $(u_n^+, u_n^-, u_{n0})$ satisfying condition \eqref{eq:core_point_coefficients} with $u_{n0} \in (\ul{Q}_n(\hat{x}_{a(n)}) - \hat{\theta}_n,  \infty)$. We prove that $(u_n^+, u_n^-, u_{n0}) \in \relint\cbr{\epi(\ol{\co}(\ul{Q}_n'(\cdot, \cdot; \hat{x}_{a(n)}))) - (\mbf{0}, \mbf{0}, \hat{\theta}_n)}$.

Since $\ul{Q}_n'(\cdot, \cdot; \hat{x}_{a(n)})$ is defined on $Z_{\hat{x}_{a(n)}}^{lift}$, $\ol{\co}(\ul{Q}_n'(\cdot, \cdot; \hat{x}_{a(n)}))$ is defined on $\conv(Z_{\hat{x}_{a(n)}}^{lift})$. Thus, it suffices to show that $(u_n^+, u_n^-) \in \relint(\conv(Z_{\hat{x}_{a(n)}}^{lift}))$ and $u_{n0} > \ol{\co}(\ul{Q}_n'(u_n^+, u_n^-; \hat{x}_{a(n)})) - \hat{\theta}_n$.

We first show that the chosen $u_{n}^+, u_{n}^-$ satisfy the relative interior requirement. To this end, we analyze the lifted domain $Z_{\hat{x}_{a(n)}}^{lift}$ defined in \eqref{def:lifted_domain}. For notational simplicity, we denote $Z^L := Z_{\hat{x}_{a(n)}}^{lift}$ and $Z := Z_{a(n)}$ in this proof. Since $Z = \prod_{k} [0, B_k]$ under the assumptions made in Section \ref{sec:relu_dual_cuts}, we can decompose $Z^L$ dimension-wise. Define $Z^L_k$ for $k \in [d_{a(n)}]$ as follows:
\begin{align*}
    Z^L_k = \{(w_{nk}^+, w_{nk}^-): {} & \exists z_{nk} \in [0, B_k] \text{ and } \exists r_{nk} \in \br{0,1} \text{ s.t. } \\
    & w_{nk}^+ - w_{nk}^- = z_{nk} - \hat{x}_{a(n), k}, \\
    & 0 \le w_{nk}^+ \le (B_k - \hat{x}_{a(n),k}) r_{nk}, \\
    & 0 \le w_{nk}^- \le \hat{x}_{a(n),k} (1 - r_{nk})\}.
\end{align*}
Now, we can write $Z^L = \prod_{k \in [d_{a(n)}]} Z^L_k$. This implies that
\begin{align}\label{proof:zhull_decom}
    \conv(Z^L) = \prod_{k \in [d_{a(n)}]} \conv(Z^L_k).
\end{align}
This further implies that $\relint(\conv(Z^L)) = \prod_{k \in [d_{a(n)}]} \relint(\conv(Z^L_k))$. Now, it is easy to see that $\conv(Z_k^L)$ is the convex hull of points $(0,0), (B_k - \hat{x}_{nk}, 0)$ and $(0, \hat{x}_{nk})$. The given $(u_{nk}^+, u_{nk}^-)$ in \eqref{eq:core_point_coefficients} thus belongs to $\relint(\conv(Z_k^L))$. Consequently, the entire vector $(u_{nk}^+, u_{nk}^-)_{k \in [d_{a(n)}]}$ defined in \eqref{eq:core_point_coefficients} belongs to $\relint(\conv(Z^L))$.

Next, we establish the second requirement: $u_{n0} > \ol{\co}(\ul{Q}_n'(u_n^+, u_n^-; \hat{x}_{a(n)})) - \hat{\theta}_n$. Since $\ul{Q}_n'(\mbf{0}, \mbf{0}; \hat{x}_{a(n)}) = \ul{Q}_n(\hat{x}_{a(n)})$ and the closed convex envelope satisfies $\ol{\co}(\ul{Q}_n'(u_n^+, u_n^-; \hat{x}_{a(n)})) \le \ul{Q}_n'(u_n^+, u_n^-; \hat{x}_{a(n)})$, we have
\begin{align*}
\ol{\co}(\ul{Q}_n'(u_n^+, u_n^-; \hat{x}_{a(n)})) - \hat{\theta}_n &\le \ul{Q}_n'(u_n^+, u_n^-; \hat{x}_{a(n)}) - \hat{\theta}_n \\
&= \ul{Q}_n(\hat{x}_{a(n)} + u_n^+ - u_n^-) - \hat{\theta}_n.
\end{align*}
The second equality follows from the definition of $\ul{Q}_n'$ in \eqref{prob:lift_rec}. Since $u_n^+ = u_n^-$ componentwise, we have $\hat{x}_{a(n)} + u_n^+ - u_n^- = \hat{x}_{a(n)}$, so $\ul{Q}_n(\hat{x}_{a(n)} + u_n^+ - u_n^-) = \ul{Q}_n(\hat{x}_{a(n)})$. Therefore, $\ol{\co}(\ul{Q}_n'(u_n^+, u_n^-; \hat{x}_{a(n)})) - \hat{\theta}_n \le \ul{Q}_n(\hat{x}_{a(n)}) - \hat{\theta}_n$. Since $u_{n0} \in (\ul{Q}_n(\hat{x}_{a(n)}) - \hat{\theta}_n, \infty)$, we conclude that $u_{n0} > \ol{\co}(\ul{Q}_n'(u_n^+, u_n^-; \hat{x}_{a(n)})) - \hat{\theta}_n$, completing the proof.
\end{proof}

The advantage of Proposition \ref{prop:core_point} is that it provides a computationally efficient way to obtain a core point when the incumbent solution does not lie on the boundary. Specifically, the scalar $u_{n0}$ can be computed directly from $\ul{Q}_n(\hat{x}_{a(n)})$, which is readily available, without requiring additional subproblem solves. For incumbent solutions that lie on the boundary of the feasible region, we discuss our approach for deriving a core point in the computational results section.

Having established Pareto-optimality of the normalized Lagrangian cuts in the extended space (Proposition \ref{prop:fullner_pareto}), we now investigate the implications of this property in the original space. Towards this end, we consider the following definition of Pareto-optimal cuts.

\begin{definition}[Pareto-optimal $h$-cut]
\label{def:pareto_optimal_hcut}
A cut of the form $\theta_n \geq h(x_{a(n)}; \ell^1, \pi^1)$ dominates the cut $\theta_n \geq h(x_{a(n)}; \ell^2, \pi^2)$ if $h(x_{a(n)}; \ell^1, \pi^1) \geq h(x_{a(n)}; \ell^2, \pi^2)$ for all $x_{a(n)} \in X_{a(n)}$, with strict inequality for at least one point in $X_{a(n)}$. A valid $h$-cut is Pareto-optimal for reference set $\epi_{X_{a(n)}}(\ul{Q}_n)$ if no other valid $h$-cut dominates it.
\end{definition}

Note that Definition \ref{def:pareto_optimal_hcut} generalizes Definition \ref{def:pareto_optimal} of Pareto-optimal linear cuts. For a given incumbent $\hat{x}_{a(n)}$, the function $h$ of interest is:
\begin{align} \label{eq:h_function}
    h(x_{a(n)}; \pi_n^+, \pi_n^-, \pi_{n0}) &= \frac{1}{\pi_{n0}} \Bigg(\mc{L}_n(\pi_n^+, \pi_n^-, \pi_{n0}; \hat{x}_{a(n)})   \nonumber \\
    &\quad - \sum_{k \in [d_{a(n)}]} \pi_{nk}^+ (x_{a(n),k} - \hat{x}_{a(n),k})^+ - \sum_{k \in [d_{a(n)}]} \pi_{nk}^- (x_{a(n),k} - \hat{x}_{a(n),k})^- \Bigg).
\end{align}
The cut $\theta_n \geq h(x_{a(n)}; \pi_n^+, \pi_n^-, \pi_{n0})$ is equivalent to the ReLU cut \eqref{cut:lagrn_relu} in the original space. This formulation allows us to define Pareto-optimality for all valid ReLU cuts at a given incumbent. 

In defining Pareto optimality, it is essential to specify the reference set over which Pareto optimality is attained. In our setting, the reference set of interest is defined as follows:
\begin{align*} 
    \mc{H}_n = \{(x_{a(n)}, \theta_n): &x_{a(n)} \in \conv(Z_{a(n)}), \\
    &\theta_n \geq h(x_{a(n)}; \pi_n^+, \pi_n^-, \pi_{n0}), \forall \pi_n^+, \pi_n^- \in \mb{R}^{d_{a(n)}}, \pi_{n0} \geq 0\}. \autotag \label{def:ref_set}
\end{align*}

Both function $h$ and set $\mc{H}_n$ depend on the incumbent $\hat{x}_{a(n)}$, although we omit this dependence from the notation for brevity. The set $\mc{H}_n$ can be interpreted as the epigraph generated by adding all ReLU cuts of the form \eqref{cut:lagrn_relu} for a fixed incumbent $\hat{x}_{a(n)}$. We depict an example of the set $\mc{H}_n$ in Figure \ref{fig:h_example}.

\begin{figure}[ht]
\centering
% Requires: \usepackage{tikz} \usetikzlibrary{patterns,arrows.meta}
\begin{tikzpicture}[x=1.35cm,y=1.0cm,>=Stealth]

% --- axes/frame ---
\draw[thick] (0,0) rectangle (4,5);
\node[below,yshift=-11pt] at (2,0) {$x_{a(n)}$};
\node[left, xshift=-15pt]  at (0,2.5) {$\theta_n$};

% ticks (optional)
\foreach \xx in {0,1,2,3,4} \draw (\xx,0) -- (\xx,-0.08) node[below] {\small \xx};
\foreach \yy in {0,1,2,3,4,5} \draw (0,\yy) -- (-0.08,\yy) node[left] {\small \yy};

% --- epigraph of h(x)+2 (patterned region above the dashed black curve) ---
% Use pattern + light gray so it survives grayscale + is printer-friendly.
\fill[pattern=crosshatch, pattern color=black!55, fill=black!10]
  (0,5) -- (4,5) -- (4,4)
  -- (2,2) -- (1,3) -- (0,2) -- cycle;

% --- curves: f(x)+2 (solid gray) and h(x)+2 (dashed black) ---
\draw[black!40,ultra thick] (0,2) -- (1,3) -- (2,2) -- (3,4) -- (4,4);
%\draw[black,very thick,dash pattern=on 3.2pt off 2.2pt] (0,2) -- (1,3) -- (2,2) -- (4,4);

% --- overlay lines (distinct dash patterns; no color) ---
% theta = 3 - |x-1|
\draw[black,thick,dash pattern=on 1.2pt off 1.8pt] (0,2) -- (1,3) -- (4,0);

% theta = 1 + |x-1|
\draw[black,thick,dash pattern=on 6pt off 2pt on 1.5pt off 2pt] (0,2) -- (1,1) -- (4,4);

% --- legend (styles match plot) ---
\begin{scope}[shift={(4.25,4.6)}]
  \draw[black!40,ultra thick] (0,0) -- (0.8,0) node[right,black] {$\ul{Q}_n(x_{a(n)})$};

  \draw[black,thick,dash pattern=on 1.2pt off 1.8pt] (0,-0.45) -- (0.8,-0.45)
    node[right] {$\theta_n = 3-(x_{a(n)}-1)^+ - (x_{a(n)}-1)^-$};

  \draw[black,thick,dash pattern=on 6pt off 2pt on 1.5pt off 2pt] (0,-0.9) -- (0.8,-0.9)
    node[right] {$\theta_n = 1+(x_{a(n)}-1)^+ + (x_{a(n)}-1)^- $};

  % optional legend key for the epigraph region
  \begin{scope}[yshift=-1.35cm]
    \draw[black,thin] (0,0) rectangle (0.8,0.22);
    \fill[pattern=crosshatch, pattern color=black!55, fill=black!10]
      (0,0) rectangle (0.8,0.22);
    \node[right] at (0.9,0.11) {$\mc{H}_n$};
  \end{scope}
\end{scope}

\end{tikzpicture}
\caption{The solid (grey) line depicts $\ul{Q}_n(x_{a(n)})$, a piecewise-linear function. The domain $Z_{a(n)} = [0,4]$ and the shaded region shows the epigraph $\mc{H}_n$ at incumbent $\hat{x}_{a(n)}=1$. Two ReLU cuts are shown: $\theta_n = 3-(x_{a(n)}-1)^+ - (x_{a(n)}-1)^-$ (dotted line) and $\theta_n = 1+(x_{a(n)}-1)^+ + (x_{a(n)}-1)^-$ (dashed line). These cuts are obtained using different normalization coefficients, and their epigraph intersection equals $\mc{H}_n$. Observe that $\epi(\ul{Q}_n)\subsetneq \mc{H}_n \subsetneq \epi(\overline{\co}(\ul{Q}_n))$.}
\label{fig:h_example}
\end{figure}
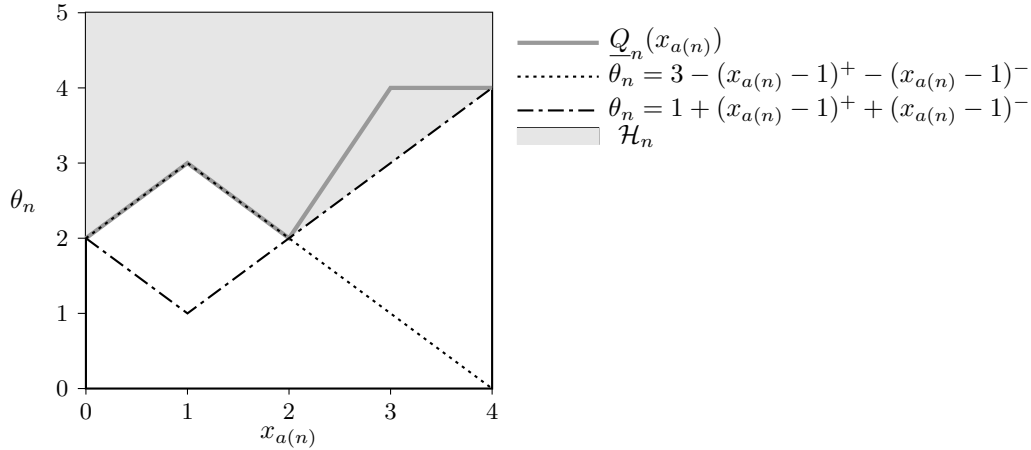

We now establish a useful property of the proposed cut with respect to this new notion of Pareto optimality.
\begin{proposition} \label{prop:relu_pareto}
    Let $\hat{x}_{a(n)}$ be the incumbent solution and function $h$ be as defined in \eqref{eq:h_function}. For all points $(u_n^+, u_n^-, u_{n0})$ that belong to $\relint\cbr{\epi(\ol{\co}(\ul{Q}_n'(\cdot, \cdot; \hat{x}_{a(n)}))) - (\mbf{0}, \mbf{0}, \hat{\theta}_n)}$, any optimal solution $(\hat{\pi}_n^+, \hat{\pi}_n^-, \hat{\pi}_{n0})$ to the normalized dual \eqref{norm:dual} with $\hat{\pi}_{n0} > 0$ defines a Pareto-optimal $h$-cut of form $\theta_n \geq h(x_{a(n)}; \hat{\pi}_n^+, \hat{\pi}_n^-, \hat{\pi}_{n0})$ for reference set $\mc{H}_n$. Further, we have $\mc{H}_n \subseteq \epi_{\conv(Z_{a(n)})}(\ol{\co}(\ul{Q}_n))$, and if $\ol{\co}(\ul{Q}_n)(\hat{x}_{a(n)}) < \ul{Q}_n(\hat{x}_{a(n)})$, then $\mc{H}_n \subsetneq \epi_{\conv(Z_{a(n)})}(\ol{\co}(\ul{Q}_n))$.
\end{proposition}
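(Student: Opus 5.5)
The proof has three parts: Pareto-optimality of the $h$-cut for $\mc{H}_n$, the inclusion $\mc{H}_n \subseteq \epi_{\conv(Z_{a(n)})}(\ol{\co}(\ul{Q}_n))$, and strictness of that inclusion under the stated condition.

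\emph{Pareto-optimality.} The idea is to transfer Proposition~\ref{prop:fullner_pareto} from the lifted space to the original space through the map $\phi:(w^+,w^-)\mapsto \hat{x}_{a(n)}+w^+-w^-$. Restricted to the ``canonical'' lift $w^+_k=(x_k-\hat{x}_{a(n),k})^+$, $w^-_k=(x_k-\hat{x}_{a(n),k})^-$, this map is a bijection between $Z_{a(n)}$ and a subset of $Z^{lift}_{\hat{x}_{a(n)}}$. Along this subset the linear cut \eqref{norm:cut} divided by $\hat\pi_{n0}$ coincides with $h(x;\hat\pi)$.

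Suppose some valid $h$-cut with parameters $(\pi^+,\pi^-,\pi_0)$ dominates $h(\cdot;\hat\pi)$ on $\conv(Z_{a(n)})$. We may assume $\pi_0>0$ and rescale so that $\pi_0=\hat\pi_{n0}$, since $h$ is invariant under positive scaling of $(\pi^+,\pi^-,\pi_0)$ by homogeneity of $\mc{L}_n$.

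The first step is to show that domination on the canonical lift implies domination of the corresponding affine cuts on all of $\conv(Z^{lift}_{\hat{x}_{a(n)}})$. In the lifted space the two cuts are affine functions of $(w^+,w^-)$. The set $\conv(Z^{lift})=\prod_k \conv(Z^L_k)$ has vertices $(0,0)$, $(B_k-\hat{x}_k,0)$ and $(0,\hat{x}_k)$ in each coordinate pair. Every such vertex is the canonical lift of a point $x$ with $x_k\in\{0,\hat{x}_k,B_k\}$, so affine domination at the vertices gives domination on the hull.

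Strictness needs more care. The dominating cut is strictly larger at some $\bar x$, hence strictly larger at its canonical lift, which lies in $\conv(Z^{lift})$. The other cut is valid for $\ul{Q}'_n$ on $Z^{lift}$, and since it is affine it is also valid for $\ol{\co}(\ul{Q}'_n)$. This contradicts Proposition~\ref{prop:fullner_pareto}, provided the dominating cut is itself a cut of the lifted form, which it is by construction from $\mc{L}_n$.

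The main obstacle is the passage from domination over $X_{a(n)}$ (or $\conv(Z_{a(n)})$) in the original space to domination over the whole lifted hull. My plan is the vertex argument above: both lifted cuts are affine, so comparing them at the finitely many product vertices suffices, and each vertex is the canonical image of an original point.

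\emph{Inclusion.} Take the multipliers $\pi^+=-\pi^-=\lambda$ for an arbitrary $\lambda\in\mathbb{R}^{d_{a(n)}}$, with $\pi_{n0}=1$. Then $\pi^+_k(\cdot)^++\pi^-_k(\cdot)^-$ becomes the linear term $\lambda_k(x_k-\hat{x}_k)$. This reduces $\mc{L}_n$ to the standard Lagrangian relaxation, so $h$ reduces to a standard Lagrangian cut. The intersection over all $\lambda$ of these cuts is $\epi_{\conv(Z_{a(n)})}(\ol{\co}(\ul{Q}_n))$ by the known Lagrangian-envelope result \citep{chen2022generating}. Since $\mc{H}_n$ imposes these cuts among others, $\mc{H}_n\subseteq\epi_{\conv(Z_{a(n)})}(\ol{\co}(\ul{Q}_n))$.

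\emph{Strictness.} The proof of Proposition~\ref{prop:norm_cut_existence}, or strong duality for the ReLU dual, shows that for any $\hat\theta<\ul{Q}_n(\hat{x}_{a(n)})$ there is an $h$-cut with $h(\hat{x}_{a(n)})>\hat\theta$. Taking the supremum over such cuts gives $\inf\{\theta:(\hat{x}_{a(n)},\theta)\in\mc{H}_n\}=\ul{Q}_n(\hat{x}_{a(n)})$. Under the assumption $\ol{\co}(\ul{Q}_n)(\hat{x}_{a(n)})<\ul{Q}_n(\hat{x}_{a(n)})$, the point $(\hat{x}_{a(n)},\ol{\co}(\ul{Q}_n)(\hat{x}_{a(n)}))$ therefore lies in the envelope epigraph but not in $\mc{H}_n$, so the inclusion is strict.
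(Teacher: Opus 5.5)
Your proof is correct. The Pareto-optimality part follows the paper's own route: lift both $h$-cuts to affine cuts on $\conv(Z^{lift}_{\hat{x}_{a(n)}})$, transfer dominance from the grid $\prod_k\{0,\hat{x}_{a(n),k},B_k\}$ to the lifted hull using affinity, and contradict Proposition~\ref{prop:fullner_pareto}.

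Your version of that part is the more careful one, in two places.
\begin{enumerate}
\item The paper's last step uses only the $2d_{a(n)}$ points $v_ie^i$. It asserts that their convex hull equals $\conv(Z^{lift}_{\hat{x}_{a(n)}})$, which is false: that hull is a simplex that omits the origin and every mixed product vertex. You instead use all $3^{d_{a(n)}}$ product vertices, each of which is the canonical lift of a grid point, and that is exactly what the affinity argument needs.
\item You carry the strict inequality through the canonical lift of $\bar x$, whereas the paper leaves strictness implicit.
\end{enumerate}

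For the inclusion $\mc{H}_n\subseteq\epi_{\conv(Z_{a(n)})}(\ol{\co}(\ul{Q}_n))$ you take a genuinely different and shorter route.
\begin{itemize}
\item The paper asserts, without proof (``the same proof works''), an extension of Proposition~4 of Deng and Xie: every normalized Lagrangian cut in the original space is a normalized ReLU cut. It then invokes F\"{u}llner et al.'s result that normalized Lagrangian cuts recover the envelope epigraph.
\item You set $\pi_n^+=-\pi_n^-=\lambda$ and $\pi_{n0}=1$. This collapses the ReLU terms to $\lambda^\top(x_{a(n)}-\hat{x}_{a(n)})$ and reduces $\mc{L}_n$ to the standard Lagrangian relaxation over $Z_{a(n)}$. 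So every standard Lagrangian cut is already an $h$-cut, and the classical biconjugate/envelope result finishes the argument.
\end{itemize}
Your route avoids that unproved extension and the normalization machinery altogether.

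The strictness part matches the paper's argument. Strong duality of the ReLU dual directly gives a cut with $\pi_{n0}=1$ and $h(\hat{x}_{a(n)})=\ul{Q}_n(\hat{x}_{a(n)})>\ol{\co}(\ul{Q}_n)(\hat{x}_{a(n)})$, so the infimum step you mention is not needed.
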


\begin{proof}
    Consider the dual problem \eqref{norm:dual} with normalization coefficients 
\begin{align*}
    (u_n^+, u_n^-, u_{n0}) \in \relint\cbr{\epi(\ol{\co}(\ul{Q}_n'(\cdot, \cdot; \hat{x}_{a(n)}))) - (\mbf{0}, \mbf{0}, \hat{\theta}_n)}.
\end{align*} 
Let $(\hat{\pi}_n^+, \hat{\pi}_n^-, \hat{\pi}_{n0})$ be an optimal solution of the dual problem with $\pi_{n0} > 0$. Assume, for a contradiction, that the resulting ReLU cut,
\begin{align}\label{proof:h_opt}
    \theta_n \geq h(x_{a(n)}; \hat{\pi}_n^+, \hat{\pi}_n^-, \hat{\pi}_{n0})
\end{align}
is not a Pareto-optimal $h$-cut for $\mc{H}_n$. This implies that there exists different set of dual variables $(\ti{\pi}_n^+, \ti{\pi}_n^-, \ti{\pi}_{n0})$ with $\ti{\pi}_{n0} > 0$, such that the cut
\begin{align} \label{proof:h_alt}
    \theta_n \geq h(x_{a(n)}; \ti{\pi}_n^+, \ti{\pi}_n^-, \ti{\pi}_{n0})
\end{align}
dominates cut \eqref{proof:h_opt} for $(x_{a(n)}, \theta_n) \in \mc{H}_n$. Now, reformulating the ReLU functions, we can represent cuts \eqref{proof:h_opt} and \eqref{proof:h_alt} in the form \eqref{norm:cut}, in the extended space $Z_{\hat{x}_{a(n)}}^{lift}$. Specifically, we reformulate \eqref{proof:h_opt} as:
\begin{align} \label{proof:h_opt_MIP}
     \theta_n \geq \frac{1}{\hat{\pi}_{n0}} \left(\mc{L}_n(\hat{\pi}_n^+, \hat{\pi}_n^-, \hat{\pi}_{n0}; \hat{x}_{a(n)}) - (\hat{\pi}_n^+, \hat{\pi}_n^-)\pmat{w_n^+ \\ w_n^-} \right).
\end{align}
 Similarly, we reformulate \eqref{proof:h_alt} as:
\begin{align} \label{proof:h_alt_MIP}
    \theta_n \geq \frac{1}{\ti{\pi}_{n0}} \left(\mc{L}_n(\ti{\pi}_n^+, \ti{\pi}_n^-, \ti{\pi}_{n0}; \hat{x}_{a(n)}) - (\ti{\pi}_n^+, \ti{\pi}_n^-)\pmat{w_n^+ \\ w_n^-}\right).
\end{align}
 We show that if cut \eqref{proof:h_alt} dominates \eqref{proof:h_opt} on $\mc{H}_n$, then cut \eqref{proof:h_alt_MIP} dominates cut \eqref{proof:h_opt_MIP} in the extended space $\epi(\ol{\co}(\ul{Q}_n'(\cdot, \cdot; \hat{x}_{a(n)})))$ on $\conv(Z_{\hat{x}_{a(n)}}^{lift})$. This contradicts Proposition \ref{prop:fullner_pareto}, which states that cut \eqref{proof:h_opt_MIP} is Pareto-optimal on $\conv(Z_{\hat{x}_{a(n)}}^{lift})$. 

Since, the set $Z_{a(n)} = \prod_{k \in [d_{a(n)}]} [0, B_k]$, we consider points $x_{a(n)}\in \prod_{k \in [d_{a(n)}]} \br{0, \hat{x}_{a(n),k}, B_k} \subset Z_{a(n)}$, where $\prod_{k \in [d_{a(n)}]} \br{0, \hat{x}_{a(n),k}, B_k}$ denotes the cartesian product of the set $\br{0, \hat{x}_{a(n),k}, B_k}$ across all dimensions $k \in [d_{a(n)}]$. Furthermore, since cut \eqref{proof:h_alt} dominates cut \eqref{proof:h_opt} on $\mc{H}_n$, by definition of $\mc{H}_n$, the dominance will also hold over $\prod_{k \in [d_{a(n)}]} \br{0, \hat{x}_{a(n),k}, B_k}$. In other words:
\begin{align} \label{proof:h_domination}
    h(x_{a(n)}; \ti{\pi}_n^+, \ti{\pi}_n^-, \ti{\pi}_{n0}) \geq h(x_{a(n)}; \hat{\pi}_n^+, \hat{\pi}_n^-, \hat{\pi}_{n0}), \quad \forall x_{a(n)} \in \prod_{k \in [d_{a(n)}]} \br{0, \hat{x}_{a(n),k}, B_k}.
\end{align}

Next, for $i \in [2d_{a(n)}]$, let $e^i \in \mathbb{R}^{2d_{a(n)}}$ denote the $i^{th}$ unit vector, i.e., the vector whose $i^{th}$ component equals $1$ and whose remaining components are zero. We index the coordinates of $\mathbb{R}^{2d_{a(n)}}$ so that, for each $k \in [d_{a(n)}]$, the $k^{th}$ component corresponds to the variable $w_{nk}^+$, while the $(d_{a(n)} + k)^{th}$ component corresponds to the variable $w_{nk}^-$. 

In addition, define scalars $v_i$ for $i \in [2d_{a(n)}]$ by
\begin{align*}
v_k := B_k - \hat{x}_{a(n),k}, \qquad 
v_{d_{a(n)} + k} := \hat{x}_{a(n),k}, \quad \forall k \in [d_{a(n)}].
\end{align*}
Now, consider all points $\br{v_i e^i}, i \in [2d_{a(n)}]$. Clearly, they belong to the set $Z_{\hat{x}_{a(n)}}^{lift}$ and, therefore, also to the set $\conv(Z_{\hat{x}_{a(n)}}^{lift})$. Under the mapping \eqref{relu_linear:equal}, these points belong to $\prod_{k \in [d_{a(n)}]} \br{0, \hat{x}_{a(n),k}, B_k}$ in the $Z_{a(n)}$ space. Furthermore, for these points, the right-hand side of \eqref{proof:h_opt_MIP} and \eqref{proof:h_alt_MIP} match with right-hand side of \eqref{proof:h_opt} and \eqref{proof:h_alt}, respectively, on the corresponding points in $\prod_{k \in [d_{a(n)}]} \br{0, \hat{x}_{a(n),k}, B_k}$. Owing to domination relation in \eqref{proof:h_domination}, this implies that cut \eqref{proof:h_alt_MIP} dominates cut \eqref{proof:h_opt_MIP} on points $\br{v_i e^i}, i \in [2d_{a(n)}]$ in the extended space. Now, due to the decomposition relation established in \eqref{proof:zhull_decom}, we know that $\conv(\br{v_i e^i}_{i \in [2 d_{a(n)}]}) = \conv(Z_{\hat{x}_{a(n)}}^{lift})$. This proves that cut \eqref{proof:h_alt_MIP} dominates cut \eqref{proof:h_opt_MIP} on entire $\conv(Z_{\hat{x}_{a(n)}}^{lift})$. This contradicts Proposition \ref{prop:fullner_pareto}, hence our original claim in Proposition \ref{prop:relu_pareto} is true.

Now, we prove the second part of the proposition which states that set $\mc{H}_n \subseteq \epi_{\conv(Z_{a(n)})}(\ol{\co}(\ul{Q}_n))$ and if $\ol{\co}(\ul{Q}_n)(\hat{x}_{a(n)}) < \ul{Q}_n(\hat{x}_{a(n)})$, then $\mc{H}_n \subsetneq \epi_{\conv(Z_{a(n)})}(\ol{\co}(\ul{Q}_n))$. According to Proposition~4 of \citealt{deng2024relu}, any tight Lagrangian cut derived in the original space, from the Lagrangian dual obtained by relaxing the standard copy constraints, is a ReLU Lagrangian cut \eqref{cut:ReLU} with $\mc{L}^R_n(\pi_n^+, \pi_n^-; \hat{x}_{a(n)}) = \ul{Q}_n(\hat{x}_{a(n)})$. More generally, the same proof works in showing that any normalized Lagrangian cut derived in the original space can be expressed as a normalized ReLU Lagrangian cut. This implies that the set $\mc{H}_n$ defined in \eqref{def:ref_set} also consists of normalized Lagrangian cuts (linear cuts obtained in original space). From Theorem~3.5, Remark~3.13, and Lemma~3.14 of \citealt{fullnernew}, the collection of all normalized Lagrangian cuts recovers the epigraph $\epi_{\conv(Z_{a(n)})}(\ol{\co}(\ul{Q}_n))$. It therefore follows that $\mc{H}_n$ with linear Lagrangian cuts and additional non-linear ReLU cuts is a subset of $\epi_{\conv(Z_{a(n)})}(\ol{\co}(\ul{Q}_n))$. 

Next consider the case when $\ol{\co}(\ul{Q}_n)(\hat{x}_{a(n)}) < \ul{Q}_n(\hat{x}_{a(n)})$. We know that point $(\hat{x}_{a(n)}, \ol{\co}(\ul{Q}_n)(\hat{x}_{a(n)}))$ belongs to  $\epi_{\conv(Z_{a(n)})}(\ol{\co}(\ul{Q}_n))$. Since, $\ol{\co}(\ul{Q}_n)(\hat{x}_{a(n)}) < \ul{Q}_n(\hat{x}_{a(n)})$, we also know that there exists a ReLU cut that violates this point. So, this point does not belong to $\mc{H}_n$. This proves that  $\mc{H}_n \subsetneq \epi_{\conv(Z_{a(n)})}(\ol{\co}(\ul{Q}_n))$.
\end{proof}

Note that if we do not employ ReLU copy constraints and instead use the standard copy constraints, then normalizing the respective dual yields linear Lagrangian cuts. As shown by \citealt{fullnernew}, such cuts can attain Pareto-optimality only with respect to the epigraph
$
\epi_{\conv(Z_{a(n)})}\!\left(\ol{\co}\!\left(\ul{Q}_n\right)\right).
$
Since the set $ \mc{H}_n $ is a strict subset of this epigraph, Proposition~\ref{prop:relu_pareto} establishes that the nonlinear ReLU cuts achieve a stronger notion of Pareto-optimality than the linear cuts derived by normalizing the standard Lagrangian dual.

\subsubsection{Tight Cuts}

In discussing tight cuts, it is important to distinguish between two notions of cut strength that are often conflated in the literature. The first is tightness at incumbent solution: a cut is tight if it matches the cost-to-go function $\ul{Q}_n$ exactly at the current state variable solution. Under this definition, several families of cuts—including ReLU cuts obtained by solving dual \eqref{relu:dual}—are tight. For instance, $\Lambda$-shaped cuts (see \citealt{ahmed2022stochastic}, \citealt{deng2024relu}), which extend classical $L$-shaped cuts to mixed-integer state variables and are obtained by imposing $\pi_n^+ = \pi_n^-$ in \eqref{relu:dual}, are also tight at the incumbent. Tightness is a convenient sufficient condition for convergence—if one generates a tight cut at every iteration, asymptotic convergence follows. It is not, however, a necessary condition. Proposition~\ref{prop:norm_cut_existence} establishes asymptotic convergence without requiring tightness at the incumbent.

The second notion is global approximation quality: a cut is globally strong if it yields strong lower bounds not only at the incumbent but also across the feasible region. $\Lambda$-shaped cuts are typically weak in this sense, since their approximation can be weak away from the incumbent (see computational studies of integer $L$-shaped cuts (\citealt{zou2019stochastic, bansal2024computational}) and of $\Lambda$-shaped cuts (\citealt{deng2024relu}) for reference). For this reason, the Pareto-optimality concept introduced in the previous section provides a more meaningful measure of approximation quality. In this section, we study how to select normalization coefficients to obtain ReLU cuts that are tight at the incumbent while also being Pareto-optimal over the domain.

\begin{proposition} \label{prop:tight_relu}
    There exists an $\epsilon$-ball $B_{\epsilon}(\mbf{0}, \mbf{0})$, around $(\mbf{0}, \mbf{0})$ in extended space $\conv(Z_{\hat{x}_{a(n)}}^{lift})$ such that for normalization coefficients $(u_n^+, u_n^-, u_{n0})$ satisfying:
    \begin{align*}
        &(u_{n}^+, u_n^-) \in B_{\epsilon}(\mbf{0}, \mbf{0}) \cap \relint(\conv(Z^{lift}_{\hat{x}_{a(n)}})), \\
        &u_{n0} \geq  \ul{Q}_n'(u_n^+, u_n^-; \hat{x}_{a(n)}) - \hat{\theta}_n,
    \end{align*}
    an optimal dual solution $(\hat{\pi}_n^+, \hat{\pi}_n^-, \hat{\pi}_{n0})$ to \eqref{norm:dual} with $\hat{\pi}_{n0} > 0$ defines a tight ReLU cut at the incumbent solution $\hat{x}_{a(n)}$, that is, $\frac{1}{\hat{\pi}_{n0}} \left(\mc{L}_n(\hat{\pi}_n^+, \hat{\pi}_n^-, \hat{\pi}_{n0}; \hat{x}_{a(n)}) \right) = \ul{Q}_n(\hat{x}_{a(n)})$. Furthermore, the cut is a Pareto-optimal $h$-cut over the reference set $\mc{H}_n$ for $h$ defined in \eqref{eq:h_function}.
\end{proposition}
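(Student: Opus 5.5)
We plan to work entirely in the lifted space, where Proposition~\ref{prop:lifted_lagrangian} lets us treat the ReLU dual as an ordinary Lagrangian dual of $\ul{Q}_n'(\cdot,\cdot;\hat{x}_{a(n)})$ at the incumbent $(\mbf{0},\mbf{0})$. Tightness of the cut at $\hat{x}_{a(n)}$ is then the statement $\mc{L}_n(\hat{\pi}_n^+,\hat{\pi}_n^-,\hat{\pi}_{n0};\hat{x}_{a(n)})/\hat{\pi}_{n0} = \ul{Q}_n'(\mbf{0},\mbf{0};\hat{x}_{a(n)})$. The Pareto-optimality claim should follow directly from Proposition~\ref{prop:relu_pareto}, provided the chosen $(u_n^+,u_n^-,u_{n0})$ lies in $\relint\cbr{\epi(\ol{\co}(\ul{Q}_n')) - (\mbf{0},\mbf{0},\hat{\theta}_n)}$. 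This holds because $(u_n^+,u_n^-)\in\relint(\conv(Z^{lift}_{\hat{x}_{a(n)}}))$ and $u_{n0} \ge \ul{Q}_n'(u_n^+,u_n^-) - \hat{\theta}_n$. If equality holds, we will additionally need $\ol{\co}(\ul{Q}_n')(u_n^+,u_n^-) < \ul{Q}_n'(u_n^+,u_n^-)$ or a strict-inequality adjustment. I would handle this as in Proposition~\ref{prop:core_point}, and if necessary restrict to strict inequality.

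For tightness, the plan is to use the geometric interpretation of normalization from \citealt{fullnernew}. With a linear homogeneous $g_n$, the normalized dual \eqref{norm:dual} is equivalent to finding the point where the ray from $(\mbf{0},\mbf{0},\hat{\theta}_n)$ in direction $(u_n^+,u_n^-,u_{n0})$ leaves $\epi(\ol{\co}(\ul{Q}_n'))$. An optimal $(\hat{\pi}_n^+,\hat{\pi}_n^-,\hat{\pi}_{n0})$ is then a supporting hyperplane of that epigraph at the exit point $p^* = (\mbf{0},\mbf{0},\hat{\theta}_n) + t^*(u_n^+,u_n^-,u_{n0})$. We will therefore show that for $(u_n^+,u_n^-)$ close to $(\mbf{0},\mbf{0})$, every supporting hyperplane at $p^*$ with $\hat{\pi}_{n0}>0$ also supports $\epi(\ol{\co}(\ul{Q}_n'))$ at $(\mbf{0},\mbf{0},\ul{Q}_n'(\mbf{0},\mbf{0}))$. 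By the proof of Proposition~\ref{prop:norm_cut_existence}, this last point lies on the graph of $\ol{\co}(\ul{Q}_n')$, since $(\mbf{0},\mbf{0})$ is an extreme point of the lifted domain.

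The key structural fact is polyhedrality. Under rational data and compactness, $\conv$ of the MIP-representable epigraph is a rational polyhedron (\citealt{meyer1974existence}), so $\ol{\co}(\ul{Q}_n')$ is piecewise linear on the polytope $\conv(Z^{lift})$, which is a product of triangles with a vertex at the origin. Hence there exist $\epsilon>0$ and finitely many faces of the epigraph containing the vertex point $q_0=(\mbf{0},\mbf{0},\ul{Q}_n'(\mbf{0},\mbf{0}))$ that cover $\epi(\ol{\co}(\ul{Q}_n'))\cap (B_\epsilon \times \mb{R})$ near the lower boundary. We choose $\epsilon$ so that for $(w^+,w^-)\in B_\epsilon$, $\ol{\co}(\ul{Q}_n')$ is given by a max of affine pieces all active at the origin. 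Then $p^*$ lies on a face $F$ of the epigraph that contains $q_0$. Any hyperplane supporting the epigraph at $p^*$ contains the minimal face through $p^*$, and hence contains $q_0$. Thus $\mc{L}_n(\hat{\pi})$ equals the hyperplane's value at $q_0$, which gives $\hat{\pi}_{n0}\ul{Q}_n'(\mbf{0},\mbf{0}) = \mc{L}_n(\hat{\pi}_n^+,\hat{\pi}_n^-,\hat{\pi}_{n0})$. Dividing by $\hat{\pi}_{n0}>0$ yields tightness.

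The main obstacle is this last step: ensuring that $p^*$ lands in a face containing $q_0$ rather than on a vertical face or a face of a distant piece. We also need to guarantee that the ray exits through the graph part of the epigraph, not its upper unbounded directions. The bound $u_{n0}\ge \ul{Q}_n'(u_n^+,u_n^-)-\hat{\theta}_n$ is where I expect to use this. It forces the direction to point at or above the graph at $(u_n^+,u_n^-)$, so that $t^*\le 1$ and the exit point has lifted coordinates $t^*(u_n^+,u_n^-)\in B_\epsilon$, a region where all lower faces pass through $q_0$ by the choice of $\epsilon$. I would also need to rule out the case $t^*=0$, which is excluded because $\hat{\theta}_n<\ul{Q}_n'(\mbf{0},\mbf{0})$ places the starting point outside the epigraph. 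Making the ``all faces meeting $B_\epsilon$ contain $q_0$'' choice rigorous via the finite face lattice of the polyhedron is the technical core.
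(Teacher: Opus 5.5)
Your proposal is correct and follows the paper's proof essentially step for step. It uses the same local representation of $\ol{\co}(\ul{Q}_n')$ near $(\mbf{0},\mbf{0})$ as $\ul{Q}_n(\hat{x}_{a(n)})$ plus a positively homogeneous max of linear pieces, the same use of the $u_{n0}$ bound to make $\eta_n=1$ feasible so that the projection point lies in $B_\epsilon(\mbf{0},\mbf{0})$, and Proposition~\ref{prop:relu_pareto} for Pareto-optimality. The only difference is that the paper finishes the tightness step by a direct scaling argument rather than via the face lattice: a valid cut tight at $\eta^*(u_n^+,u_n^-)$ must stay tight along $t\,\eta^*(u_n^+,u_n^-)$ for $t$ near $1$, which forces its constant term to equal $\ul{Q}_n(\hat{x}_{a(n)})$; this is the same idea made concrete.

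Your worry about equality in the $u_{n0}$ bound is legitimate, since the paper simply asserts that these coefficients satisfy the core-point condition of Proposition~\ref{prop:relu_pareto}. You need not strengthen the hypothesis, though. When $\hat{\theta}_n<\ul{Q}_n(\hat{x}_{a(n)})$, the projection point has lifted coordinates $t^*(u_n^+,u_n^-)\in\relint(\conv(Z^{lift}_{\hat{x}_{a(n)}}))$, and an affine minorant of $\ol{\co}(\ul{Q}_n')$ that is tight at a relative-interior point of its domain cannot be strictly dominated. That is exactly the lifted Pareto-optimality the argument of Proposition~\ref{prop:relu_pareto} relies on.
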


\begin{proof}
    We prove this result in four parts.
\begin{enumerate}[label=6.\arabic*]
    \item \label{proof:tight_relu:1} We first show that the function $\ul{Q}_n'(\cdot, \cdot; \hat{x}_{a(n)})$ is piecewise polyhedral with finitely many pieces.
    \item \label{proof:tight_relu:2} There is a neighborhood $B_{\epsilon}(\mbf{0}, \mbf{0})$ of the extreme point $(\mbf{0}, \mbf{0})$ in the lifted space $\conv(Z_{\hat{x}_{a(n)}}^{lift})$ where all affine pieces that locally define the function $\ol{\co}(\ul{Q}_n'(\cdot, \cdot; \hat{x}_{a(n)}))$ must agree at $(\mbf{0}, \mbf{0})$. Furthermore, the value of the closed convex envelope at $(\mbf{0}, \mbf{0})$ is $\ul{Q}_n'((\mbf{0}, \mbf{0}); \hat{x}_{a(n)}) = \ul{Q}_n(\hat{x}_{a(n)})$.
    \item \label{proof:tight_relu:3} %With the neighborhood $B_{\epsilon}(\mbf{0},\mbf{0})$, 
    We show that if the coefficients $(u_n^+,u_n^-,u_{n0})$ satisfy the conditions of the proposition, then an optimal solution of \eqref{norm:dual} induces a cut of the form \eqref{norm:cut} that is tight at some point $(\ti{w}_n^+,\ti{w}_n^-)\in \relint(B_{\epsilon}(\mbf{0},\mbf{0}))$ with respect to the closed convex envelope $\ol{\co}(\ul{Q}_n'(\cdot,\cdot;\hat{x}_{a(n)}))$. 
    %\ab{Tightness is typically defined with respect to $\ul{Q}_n'$; here we use tightness with respect to $\ol{\co}(\ul{Q}_n')$ instead. We state this explicitly once here so that it need not be repeated in the following steps. Also, $\ol{\co}(\ul{Q}_n')$ comes up again in \eqref{proof:proj_point} in proof of Part \ref{proof:tight_relu:3} when we use Corollary 3.22 of \citealt{fullner2024lipschitz}.} \sk{Need to discuss this. The proposition statement is tightness with respect to $\ul{Q}_n(\hat{x}_{a(n)})$.} 
    %Note, from Part \ref{proof:tight_relu:2}, that tightness at $(\mbf{0}, \mbf{0})$ for $\ol{\co}(\ul{Q}_n')$ implies tightness at $(\mbf{0}, \mbf{0})$ with respect to $\ul{Q}_n'$. In the original space, this leads to tightness with respect to $\ul{Q}_n$ at $\hat{x}_{a(n)}$.
    \item \label{proof:tight_relu:4} If the cut is tight at $(\ti{w}_n^+, \ti{w}_n^-) \in \relint(B_{\epsilon}(\mbf{0}, \mbf{0}))$ for $\ol{\co}(\ul{Q}_n'(\cdot,\cdot;\hat{x}_{a(n)}))$, then it is also tight at $(\mbf{0}, \mbf{0})$ for $\ol{\co}(\ul{Q}_n'(\cdot,\cdot;\hat{x}_{a(n)}))$. By Part \ref{proof:tight_relu:2}, tightness at $(\mbf{0}, \mbf{0})$ for $\ol{\co}(\ul{Q}_n')$ implies tightness at $(\mbf{0}, \mbf{0})$ with respect to $\ul{Q}_n'$. Projecting to the original space, this yields tightness with respect to $\ul{Q}_n$ at $\hat{x}_{a(n)}$.. Furthermore, the cut written in the original space is a Pareto-optimal $h$-cut over the reference set $\mc{H}_n$ for $h$ defined in \eqref{eq:h_function}.
\end{enumerate}

We first prove \textbf{Part} \ref{proof:tight_relu:1}. By Lemma~2.2 of \citealt{fullnernew}, the function $\ul{Q}_n$ is piecewise polyhedral with finitely many pieces. Observe that the function $\ul{Q}_n'(\cdot,\cdot;\hat{x}_{a(n)})$ can be written as the composition $\ul{Q}_n \circ G$, where the affine mapping
$G:\mathbb{R}^{2d_{a(n)}} \to \mathbb{R}^{d_{a(n)}}$ is defined as $ G(w_n^+,w_n^-)=\hat{x}_{a(n)}+w_n^+-w_n^-$. For notational simplicity, we omit the dependence of the function $G$ on the incumbent $\hat{x}_{a(n)}$. To show that $\ul{Q}_n \circ G$ is piecewise polyhedral, it suffices to prove that its epigraph is a finite union of polyhedra. Indeed,
\begin{align*}
\epi(\ul{Q}_n \circ G)
&= \left\{(w_n^+,w_n^-,\theta_n) : \theta_n \ge \ul{Q}_n\big(G(w_n^+,w_n^-)\big)\right\} \\
&= \left\{(w_n^+,w_n^-,\theta_n) : \big(G(w_n^+,w_n^-),\theta_n\big) \in \epi(\ul{Q}_n)\right\}.
\end{align*}
Define the affine map $V:\mathbb{R}^{2d_{a(n)}+1} \to \mathbb{R}^{d_{a(n)}+1}$ as $V(w_n^+,w_n^-,\theta_n) = \big(G(w_n^+,w_n^-),\theta_n\big)$. 
With this notation, $\epi(\ul{Q}_n \circ G) = V^{-1}\big(\epi(\ul{Q}_n)\big)$. Since $\ul{Q}_n$ is piecewise polyhedral, its epigraph $\epi(\ul{Q}_n)$ can be expressed as a finite union of polyhedra. The preimage of a polyhedron under an affine map is again a polyhedron. Consequently, $V^{-1}\big(\epi(\ul{Q}_n)\big)$ is a finite union of polyhedra and hence $\epi(\ul{Q}_n \circ G)$ is piecewise polyhedral. This completes the proof of the first part.

We next prove \textbf{Part} \ref{proof:tight_relu:2}. Since $\ol{\co}(\ul{Q}_n'(\cdot,\cdot;\hat{x}_{a(n)}))$ is a polyhedral convex function, it admits a finite max-representation. The representation is finite because of Part \ref{proof:tight_relu:1}. Specifically, there exists a finite index set $I$ and affine functions
\begin{align}
    \ell_i(w_n^+,w_n^-)
    = \alpha_i^\top w_n^+ + \beta_i^\top w_n^- + \gamma_i,
    \qquad i\in I, \label{eq:affine_pieces}
\end{align}
such that $\ol{\co}\big(\ul{Q}_n'(w_n^+,w_n^-;\hat{x}_{a(n)})\big) = \max_{i\in I} \ell_i(w_n^+,w_n^-)$.
Now, define 
\begin{align*}
    M := \ol{\co}\big(\ul{Q}_n'(\mbf{0},\mbf{0};\hat{x}_{a(n)})\big) = \max_{i\in I} \ell_i(\mbf{0},\mbf{0}) = \max_{i\in I} \gamma_i,
\end{align*}
where the second equality follows from substituting $(\mbf{0}, \mbf{0})$ in \eqref{eq:affine_pieces}. Let $I^\star := \{ i\in I : \ell_i(\mbf{0},\mbf{0}) = M \}$
denote the set of affine pieces attaining the maximum at $(\mbf{0},\mbf{0})$. If $I^\star = I$, then any $\epsilon>0$ suffices for the ball $B_{\epsilon}(\mbf{0}, \mbf{0})$. Otherwise, we construct $\epsilon$ as follows. Fix any $i\notin I^\star$ (such an $i$ exists since $I^\star \subsetneq I$). Then $\ell_i(\mbf{0},\mbf{0})<M$. Choose any $j\in I^\star$ and consider the affine difference
\begin{align*}
d_{ij}(w_n^+,w_n^-)
:= \ell_j(w_n^+,w_n^-) - \ell_i(w_n^+,w_n^-).
\end{align*}
Since $d_{ij}$ is affine and
\begin{align*}
d_{ij}(\mbf{0},\mbf{0})
&= \ell_j(\mbf{0},\mbf{0}) - \ell_i(\mbf{0},\mbf{0}) \\
&= M - \ell_i(\mbf{0},\mbf{0})
> 0,
\end{align*}
by continuity, there exists $\epsilon_{ij}>0$ such that
\begin{align*}
d_{ij}(w_n^+,w_n^-)>0
\quad \text{for all } (w_n^+,w_n^-)\in B_{\epsilon_{ij}}(\mbf{0},\mbf{0}).
\end{align*}
Equivalently,
\begin{align*}
\ell_i(w_n^+,w_n^-)
< \ell_j(w_n^+,w_n^-)
\quad \text{for all } (w_n^+,w_n^-)\in B_{\epsilon_{ij}}(\mbf{0},\mbf{0}).
\end{align*}
Now define $\epsilon_i := \min_{j\in I^\star} \epsilon_{ij} > 0$. Then, for all $(w_n^+,w_n^-)\in B_{\epsilon_i}(\mbf{0},\mbf{0}),$
\begin{align*}
\ell_i(w_n^+,w_n^-)
&< \ell_j(w_n^+,w_n^-), \qquad \forall j\in I^\star, \\
\ell_i(w_n^+,w_n^-)
&< \max_{j\in I^\star} \ell_j(w_n^+,w_n^-) \\
&\le \max_{m\in I} \ell_m(w_n^+,w_n^-) \\
&= \ol{\co}\big(\ul{Q}_n'(w_n^+,w_n^-;\hat{x}_{a(n)})\big).
\end{align*}
Thus, no affine piece $\ell_i$ with $i\notin I^\star$ can locally define the function in $B_{\epsilon_i}(\mbf{0},\mbf{0})$. Since there are finitely many indices $i\notin I^\star$, let $\epsilon := \min_{i\notin I^\star} \epsilon_i > 0$. Then, for every
$(w_n^+,w_n^-)\in B_{\epsilon}(\mbf{0},\mbf{0}),$ any affine piece that locally defines
$\ol{\co}(\ul{Q}_n'(\cdot,\cdot;\hat{x}_{a(n)}))$ must belong to $I^\star$ and therefore agree at $(\mbf{0},\mbf{0})$.
This proves the first statement in Part \ref{proof:tight_relu:2}. The second statement in this part follows immediately from the proof of Proposition \ref{prop:norm_cut_existence}.

Now, we prove \textbf{Part} \ref{proof:tight_relu:3}. We use Lemma 3.21 of \citealt{fullnernew}, which states that the normalized dual problem \eqref{norm:dual} can be formulated as an LP, and the dual of the LP is given by:
\begin{equation} \label{norm:feas}
\begin{aligned}
    \min_{\lambda_n, w_n^+, w_n^-, \eta_n} \quad &\eta_n \\
    &(\lambda_n, w_n^+, w_n^-) \in \conv(\mc{W}_{\hat{x}_{a(n)}}) \\
    &\eta_n \geq 0, \\
    &u_{n0} \eta_n \geq c_n^{\top} \lambda_n - \hat{\theta}_n, \\
    &\eta_n \pmat{u_n^+ \\ u_n^-} = \pmat{w_n^+ \\ w_n^-} - \pmat{\mbf{0} \\ \mbf{0}}.
\end{aligned}
\end{equation}
Here, the notation $\lambda_n$ denotes the set of variables $(x_n, y_n, (\theta_m)_{m \in C(n)})$, and $c_n^{\top} \lambda_n$ denotes the objective function $f_n(x_n, y_n) + \sum_{m \in C(n)} q_{nm} \theta_m$. Further, the set $\mc{W}_{\hat{x}_{a(n)}}$ is defined as:
\begin{align*}
\mc{W}_{\hat{x}_{a(n)}} := \{(\lambda_n, w_n^+, w_n^-):\;
    &(x_n, y_n) \in H_n(\hat{x}_{a(n)} + w_n^+ - w_n^-) \cap (X_n \times Y_n), \\
    & (w_n^+, w_n^-) \in Z_{\hat{x}_{a(n)}}^{lift}, \\
    &(x_n, \theta_m) \in \Psi_m, \quad \forall m \in C(n)\}.
\end{align*}
Now consider the ball $B_{\epsilon}(\mbf{0}, \mbf{0})$ defined in the proof of Part \ref{proof:tight_relu:2}. The relatively complete recourse assumption $\dom(\ul{Q}_n) = Z_{a(n)} = \prod_k [0, B_k]$ ensures that for every $(w_n^+, w_n^-) \in \conv(Z^{lift}_{\hat{x}_{a(n)}})$, the corresponding state $\hat{x}_{a(n)} + w_n^+ - w_n^-$ lies in the domain of $\ul{Q}_n$, and hence the feasible set $H_n(\hat{x}_{a(n)} + w_n^+ - w_n^-)$ is non-empty. Consequently, for $(w_n^+, w_n^-) \in \conv(Z^{lift}_{\hat{x}_{a(n)}})$, there exists $\lambda_n = (x_n, y_n, (\theta_m)_{m \in C(n)})$ such that $(\lambda_n, w_n^+, w_n^-) \in \mc{W}_{\hat{x}_{a(n)}}$. In other words,
\begin{align*}
    B_{\epsilon}(\mbf{0}, \mbf{0}) \subseteq \Proj_{w_n^+, w_n^-} \conv(\mc{W}_{\hat{x}_{a(n)}}).
\end{align*}
Now fix any $(u_n^+, u_n^-) \in \relint(B_{\epsilon}(\mbf{0}, \mbf{0})) \cap \relint(\conv(Z_{\hat{x}_{a(n)}}^{lift}))$ and choose $u_{n0} \geq \ul{Q}_n'(u_n^+, u_n^-; \hat{x}_{a(n)}) - \hat{\theta}_n$. We claim that the LP dual \eqref{norm:feas} has optimal value $\eta^* \leq 1$. To see this, consider the candidate solution $\eta_n = 1$ and $(w_n^+, w_n^-) = (u_n^+, u_n^-)$. By the inclusion above, and definition of $\mc{W}_{\hat{x}_{a(n)}}$ and $\ul{Q}_n'$, there exists $\lambda_n$ such that $(\lambda_n, u_n^+, u_n^-) \in \conv(\mc{W}_{\hat{x}_{a(n)}})$ and $c_n^\top \lambda_n \leq \ul{Q}_n'(u_n^+, u_n^-; \hat{x}_{a(n)})$. The constraint $\eta_n (u_n^+, u_n^-)^\top = (w_n^+, w_n^-)^\top - (\mbf{0}, \mbf{0})^\top$ is satisfied since $\eta_n = 1$. Further, since $u_{n0} \geq \ul{Q}_n'(u_n^+, u_n^-; \hat{x}_{a(n)}) - \hat{\theta}_n$, the constraint $u_{n0} \eta_n \geq c_n^\top \lambda_n - \hat{\theta}_n$ is also satisfied. Hence $\eta_n = 1$ is feasible, implying $\eta^* \leq 1$.

By Corollary 3.22 of \citealt{fullnernew}, the projection of $((\mbf{0}, \mbf{0}), \hat{\theta}_n)$ onto $\epi(\ol{\co}(\ul{Q}_n'(\cdot, \cdot; \hat{x}_{a(n)})))$ along direction $(u_n^+, u_n^-, u_{n0})$ is given by
\begin{align} \label{proof:proj_point}
    (\ti{w}_n^+, \ti{w}_n^-, \ti{\theta}_n) = (\mbf{0}, \mbf{0}, \hat{\theta}_n) + \eta_n^* (u_n^+, u_n^-, u_{n0}),
\end{align}
and the normalized cut \eqref{norm:cut} supports $\epi(\ol{\co}(\ul{Q}_n'(\cdot, \cdot; \hat{x}_{a(n)})))$ at this point. Since $\eta_n^* \leq 1$ and $(u_n^+, u_n^-) \in \relint(B_{\epsilon}(\mbf{0}, \mbf{0}))$, we have $(\ti{w}_n^+, \ti{w}_n^-) = \eta_n^*(u_n^+, u_n^-) \in \relint(B_{\epsilon}(\mbf{0}, \mbf{0}))$. This completes the proof of Part \ref{proof:tight_relu:3}.

Finally, we prove \textbf{Part} \ref{proof:tight_relu:4}. Recall from Part \ref{proof:tight_relu:2}, that 
\begin{align*}
\ell_j (\mbf{0}, \mbf{0}) = \gamma_j = \ul{Q}_n'(\mbf{0},\mbf{0};\hat{x}_{a(n)}) = \ul{Q}_n(\hat{x}_{a(n)}), \text{ for all } j \in I^\star.
\end{align*}
For simplicity, we denote the value $ \ul{Q}_n'(\mbf{0},\mbf{0};\hat{x}_{a(n)})$ as $Q_0$ in rest of this proof. Now, the function $\ol{\co}(\ul{Q}_n'(\cdot,\cdot;\hat{x}_{a(n)}))$ for $(w_n^+,w_n^-) \in B_{\epsilon}(\mbf{0},\mbf{0})$ can be written as:
\begin{align*}
    \ol{\co}(\ul{Q}_n'(w_n^+,w_n^-;\hat{x}_{a(n)})) = Q_0 + \max_{j \in I^\star} \alpha_j^\top w_n^+ + \beta_j^\top w_n^-  =: Q_0 + p(w_n^+, w_n^-),
\end{align*}
where $p(w_n^+, w_n^-) := \max_{j \in I^\star} \alpha_j^\top w_n^+ + \beta_j^\top w_n^-$. Note that the function $p$ is positively homogeneous, that is,
$p(t w_n^+, t w_n^-) = t p(w_n^+, w_n^-) \text{ for } t \ge 0$. Now, consider the normalized cut obtained in Part \ref{proof:tight_relu:3}. Again, for simplicity, we denote this cut as $\theta_n \geq c^+ w_n^+ + c^- w_n^- + c_0$. Since the cut is valid, 
\begin{align*}
    c^+ w_n^+ + c^- w_n^- + c_0 \leq \ol{\co}(\ul{Q}_n'(w_n^+,w_n^-;\hat{x}_{a(n)})), \quad \forall (w_n^+,w_n^-) \in B_{\epsilon}(\mbf{0},\mbf{0}).
\end{align*}
This implies that 
\begin{align*}
    c^+ w_n^+ + c^- w_n^- + c_0 - Q_0 \leq p(w_n^+, w_n^-), \quad \forall (w_n^+,w_n^-) \in B_{\epsilon}(\mbf{0},\mbf{0}).
\end{align*}
Now, for $(\ti{w}_n^+, \ti{w}_n^-)$ identified in Part \ref{proof:tight_relu:3}, we have $c^+ \ti{w}_n^+ + c^- \ti{w}_n^- + c_0 = \ol{\co}(\ul{Q}_n'(\ti{w}_n^+,\ti{w}_n^-;\hat{x}_{a(n)}))$, therefore, the above implication leads to
\begin{align*}
    c^+ \ti{w}_n^+ + c^- \ti{w}_n^- + c_0 - Q_0 = p(\ti{w}_n^+, \ti{w}_n^-).
\end{align*}
Furthermore, since $(\ti{w}_n^+, \ti{w}_n^-) \in \relint(B_{\epsilon}(\mbf{0},\mbf{0}))$ and also $(\mbf{0}, \mbf{0}) \in B_{\epsilon}(\mbf{0},\mbf{0})$, there exists $\delta > 0$ such that $t (\ti{w}_n^+,\ti{w}_n^-) \in B_{\epsilon}(\mbf{0},\mbf{0})$ for all $t \in [1 - \delta, 1 + \delta]$. For such $t$, we have:
\begin{align*}
    t (c^+ \ti{w}_n^+ + c^- \ti{w}_n^-) + c_0 - Q_0 \leq p(t \ti{w}_n^+, t \ti{w}_n^-) = t p(\ti{w}_n^+, \ti{w}_n^-) = t (c^+ \ti{w}_n^+ + c^- \ti{w}_n^- + c_0 - Q_0).
\end{align*}
This gives:
$c_0 - Q_0 \leq t (c_0 - Q_0)$, which implies that $(c_0 - Q_0)(1 - t) \leq 0$ for all  $t \in [1 - \delta, 1 + \delta]$. Taking $t < 1$ gives $c_0 - Q_0 \leq 0$ and $t > 1$ gives $c_0 - Q_0 \geq 0$, and hence $c_0 - Q_0 = 0$ or $c_0 = Q_0$. This proves that the obtained cut of the form $\theta_n \geq c^+ w_n^+ + c^- w_n^- + c_0$ is tight at $(\mbf{0}, \mbf{0})$ for $\ol{\co}(\ul{Q}_n'(\cdot,\cdot;\hat{x}_{a(n)}))$. By Part \ref{proof:tight_relu:2}, tightness at $(\mbf{0}, \mbf{0})$ for $\ol{\co}(\ul{Q}_n')$ implies tightness at $(\mbf{0}, \mbf{0})$ with respect to $\ul{Q}_n'$. Projecting to the original space, this yields tightness with respect to $\ul{Q}_n$ at $\hat{x}_{a(n)}$. Moreover, the coefficients $(u_n^+, u_n^-, u_{n0})$ selected in Part \ref{proof:tight_relu:3} satisfy the requirements of Proposition~\ref{prop:relu_pareto}, so the normalized cut is also a Pareto-optimal $h$-cut.
\end{proof}
The main insight of Proposition~\ref{prop:tight_relu} is that tight cuts can be attained through an appropriate choice of the core point. In particular, the proposition suggests selecting the core point sufficiently close to $(\mbf{0},\mbf{0})$ in order to obtain a cut that is tight at the incumbent. \ignore{A proof of Proposition~\ref{prop:tight_relu} is provided in Appendix \ref{app:proof_tight_v1}.}We next strengthen Proposition~\ref{prop:tight_relu} by showing that for any vectors $(u_n^+,u_n^-)$ satisfying the relative-interior requirement, we can tune the scalar $u_{n0}$ to produce a tight cut.
\begin{proposition} \label{prop:tight_relu_alpha}
    There exists a scalar $\alpha > 1$ such that for normalization coefficients $(u_n^+, u_n^-, u_{n0})$ satisfying:
    \begin{align*}
        &(u_{n}^+, u_n^-) \in \relint(\conv(Z^{lift}_{\hat{x}_{a(n)}})) \\
        &u_{n0} \geq  \alpha \left(\ul{Q}_n'(u_n^+, u_n^-; \hat{x}_{a(n)}) - \hat{\theta}_n \right),
    \end{align*}
    the optimal dual solution $(\hat{\pi}_n^+, \hat{\pi}_n^-, \hat{\pi}_{n0})$ to \eqref{norm:dual} with $\hat{\pi}_{n0} > 0$ defines a tight ReLU cut at incumbent solution $\hat{x}_{a(n)}$. Furthermore, the cut is Pareto-optimal $h$-cut over the reference set $\mc{H}_n$ for $h$ defined in \eqref{eq:h_function}.
\end{proposition}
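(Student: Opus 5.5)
The plan is to reuse the machinery from the proof of Proposition~\ref{prop:tight_relu}. Parts~\ref{proof:tight_relu:1}, \ref{proof:tight_relu:2} and \ref{proof:tight_relu:4} do not depend on the particular normalization coefficients. We therefore again take the ball $B_{\epsilon}(\mbf{0},\mbf{0})$ on which $\ol{\co}(\ul{Q}_n'(\cdot,\cdot;\hat{x}_{a(n)}))$ equals $Q_0 + p(w_n^+,w_n^-)$ with $p$ positively homogeneous. The only thing left to show is this: for $(u_n^+,u_n^-)$ anywhere in $\relint(\conv(Z^{lift}_{\hat{x}_{a(n)}}))$, a large enough $u_{n0}$ forces the projection point $(\ti{w}_n^+,\ti{w}_n^-) = \eta_n^*(u_n^+,u_n^-)$ from \eqref{proof:proj_point} into $\relint(B_{\epsilon}(\mbf{0},\mbf{0}))$. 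Once that holds, the homogeneity argument of Part~\ref{proof:tight_relu:4} gives $c_0 = Q_0 = \ul{Q}_n(\hat{x}_{a(n)})$, which is tightness. Pareto-optimality then follows from Proposition~\ref{prop:relu_pareto}, because $u_{n0} \ge \alpha(\ul{Q}_n'(u_n^+,u_n^-) - \hat{\theta}_n) > \ol{\co}(\ul{Q}_n')(u_n^+,u_n^-) - \hat{\theta}_n$ makes $(u_n^+,u_n^-,u_{n0})$ a core point. This uses $\alpha>1$ and the fact that the bracket is positive in the relevant case $\hat{\theta}_n$ small; if it is not positive, I would restrict to the case $\hat\theta_n<\ul{Q}_n(\hat x_{a(n)})$ with a positive bracket, as in Proposition~\ref{prop:norm_cut_existence}.

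The key step is bounding $\eta_n^*$ in the LP \eqref{norm:feas}. For $t \in (0,1]$, take $\eta_n = t$ and $(w_n^+,w_n^-) = t(u_n^+,u_n^-)$, which lies in $\conv(Z^{lift})$ because $(\mbf{0},\mbf{0})$ and $(u_n^+,u_n^-)$ both do. By relatively complete recourse there is a $\lambda_n$ with $c_n^\top\lambda_n \le \ol{\co}(\ul{Q}_n')(t u_n^+, t u_n^-)$; here I use that the projection of $\conv(\mc{W})$ onto $(w,\theta)$ realizes the convex envelope. The point is feasible as soon as
\begin{align*}
t\, u_{n0} \ge \ol{\co}(\ul{Q}_n')(t u_n^+, t u_n^-) - \hat{\theta}_n .
\end{align*}
Choose $t_0>0$ small so that $t_0(u_n^+,u_n^-) \in \relint(B_{\epsilon})$ for every admissible $u$. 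This works because $\conv(Z^{lift})$ is bounded, with diameter at most some $D$, so $t_0 = \epsilon/(2D)$ is uniform. By convexity, $\ol{\co}(\ul{Q}_n')(t_0 u) \le (1-t_0)Q_0 + t_0\, \ol{\co}(\ul{Q}_n')(u) \le (1-t_0)Q_0 + t_0\,\ul{Q}_n'(u)$. Feasibility at $t=t_0$ therefore needs only
\begin{align*}
u_{n0} \ge \tfrac{1-t_0}{t_0}(Q_0 - \hat{\theta}_n) + (\ul{Q}_n'(u) - \hat{\theta}_n).
\end{align*}
This gives $\eta_n^* \le t_0$, so $\ti{w} \in \relint(B_\epsilon)$.

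The main obstacle is turning this additive requirement into the multiplicative form $u_{n0} \ge \alpha(\ul{Q}_n'(u) - \hat{\theta}_n)$ with $\alpha$ independent of $u$. I would bound $Q_0 - \hat\theta_n \le \ul{Q}_n'(u) - \hat\theta_n + (Q_0 - \min_{Z}\ul{Q}_n)$. The second term is a finite constant because $\ul{Q}_n$ is piecewise polyhedral on a compact domain. The ratio $(Q_0-\hat\theta_n)/(\ul{Q}_n'(u)-\hat\theta_n)$ is then bounded above by $(Q_0 - \hat\theta_n)/(\min_Z \ul{Q}_n - \hat\theta_n)$, provided $\hat\theta_n < \min_Z \ul{Q}_n$. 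If that fails, I would argue with the denominator bounded away from zero on the relevant set, or accept $\alpha$ depending on $\hat\theta_n$ and the data. Taking $\alpha := 1 + \frac{1-t_0}{t_0}\cdot\sup_u \frac{Q_0-\hat\theta_n}{\ul{Q}_n'(u)-\hat\theta_n}$ finishes the argument, since the statement only asks for existence of $\alpha$ and it may depend on $\hat x_{a(n)}$ and $\hat\theta_n$.
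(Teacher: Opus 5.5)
Your reduction is the same as the paper's (Appendix~\ref{app:proof_tight_relu_alpha}). You keep Parts~\ref{proof:tight_relu:1}, \ref{proof:tight_relu:2} and~\ref{proof:tight_relu:4} of Proposition~\ref{prop:tight_relu}, and take $u_{n0}$ large enough that the projection point $\eta_n^*(u_n^+,u_n^-)$ lands in $B_\epsilon(\mbf{0},\mbf{0})$. Your additive condition keeps the $Q_0-\hat\theta_n$ term produced by convexity. That is more careful than the paper's one-line choice $u_{n0}=\frac{1}{\hat\eta_n}(\ol{\co}(\ul{Q}_n')(u)-\hat\theta_n)$, which makes $\eta_n=\hat\eta_n$ feasible only when $\ol{\co}(\ul{Q}_n')(\hat\eta_n u)\le\ol{\co}(\ul{Q}_n')(u)$.

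\textbf{The conversion to a multiplicative condition.} The step you flag as the main obstacle is a genuine gap, and it cannot be closed without an extra hypothesis. Suppose some admissible $u$ has $\ul{Q}_n'(u)\le\hat\theta_n$; this is compatible with $\hat\theta_n<\ul{Q}_n(\hat{x}_{a(n)})$. Then $u_{n0}\ge\alpha(\ul{Q}_n'(u)-\hat\theta_n)$ gives no positive lower bound on $u_{n0}$ for any $\alpha$, even a $u$-dependent one, and tightness really fails. Here is a counterexample:
\begin{itemize}
\item Take $d_{a(n)}=1$, $B=2$, $\hat{x}_{a(n)}=1$, and $\ul{Q}_n=0$ on $[0,0.5]$, $0.6$ on $(0.5,0.99]$, $1$ on $(0.99,2]$, with $\hat\theta_n=0.3$.
\item On $\conv(Z^{lift}_{\hat{x}_{a(n)}})$ one gets $\ol{\co}(\ul{Q}_n')=\max\{1-40w_n^-,\ (30+19w_n^+-60w_n^-)/49,\ w_n^+\}$. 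Only the first piece is active at $(\mbf{0},\mbf{0})$.
\item Take $u=(0.05,0.6)$, where $\ul{Q}_n'(u)=\ul{Q}_n(0.45)=0$, and $u_{n0}=0.01$. The ray $(\mbf{0},\hat\theta_n)+\eta(u,u_{n0})$ enters the epigraph at $\eta^*\approx 0.43$, where only the second piece is active.
\item So the optimal normalized cut, unique up to scaling, is $\theta_n\ge(30+19w_n^+-60w_n^-)/49$. Its value at the incumbent is $30/49<1=\ul{Q}_n(\hat{x}_{a(n)})$.
\end{itemize}
Neither of your fallbacks addresses this, because the obstruction is the sign and size of the bracket as $u$ varies, not dependence on the data. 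Restricting to $\hat\theta_n<\ul{Q}_n(\hat{x}_{a(n)})$ does not make the bracket positive either. The argument closes only under an assumption such as $\hat\theta_n<\min_{Z_{a(n)}}\ul{Q}_n$, which is where your ratio bound works, or with the condition on $u_{n0}$ stated additively.

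\textbf{The envelope inequality off the lifted set.} There is a second, independent error. You use $\ol{\co}(\ul{Q}_n')(u)\le\ul{Q}_n'(u)$ both for feasibility at $t_0$ and for the core-point claim. This fails off $Z^{lift}_{\hat{x}_{a(n)}}$. Points of $\relint(\conv(Z^{lift}_{\hat{x}_{a(n)}}))$ always lie off that set as soon as some $0<\hat{x}_{a(n),k}<B_k$. The envelope is taken of $\ul{Q}_n'$ restricted to $Z^{lift}_{\hat{x}_{a(n)}}$, whereas off that set $\ul{Q}_n'(u)$ is just formula \eqref{prob:lift_rec}. For example, with $d_{a(n)}=1$ and $\ul{Q}_n(x)=|x-\hat{x}_{a(n)}|$ one has $\ul{Q}_n'=w_n^++w_n^-$ on $Z^{lift}_{\hat{x}_{a(n)}}$. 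Hence $\ol{\co}(\ul{Q}_n')(s,s)=2s>0=\ul{Q}_n'(s,s)$. The same step appears in the paper's proofs of Propositions~\ref{prop:core_point} and~\ref{prop:tight_relu}.

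\textbf{Repair.} Both issues are fixed by bounding the envelope directly by $\ol{M}:=\sup_{Z_{a(n)}}\ul{Q}_n$, taking $t_0<1$.
\begin{itemize}
\item $u_{n0}\ge(\ol{M}-\hat\theta_n)/t_0$ makes $\eta_n=t_0$ feasible in \eqref{norm:feas}.
\item The same bound gives $u_{n0}>\ol{\co}(\ul{Q}_n')(u)-\hat\theta_n$, so Proposition~\ref{prop:relu_pareto} applies.
\item If $\ul{m}:=\min_{Z_{a(n)}}\ul{Q}_n>\hat\theta_n$, the choice $\alpha:=(\ol{M}-\hat\theta_n)/\bigl(t_0(\ul{m}-\hat\theta_n)\bigr)>1$ yields the stated multiplicative form with $\alpha$ independent of $u$.
\end{itemize}
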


The proof of Proposition \ref{prop:tight_relu_alpha} is provided in Appendix \ref{app:proof_tight_relu_alpha}. The proposition shows that, for any core point, one can obtain a tight cut by choosing the coefficient $u_{n0}$ sufficiently large. Together, Propositions \ref{prop:tight_relu} and \ref{prop:tight_relu_alpha} establish normalization as a method to construct cuts that are simultaneously tight and Pareto-optimal. Moreover, our arguments extend to the setting with standard copy constraints (as opposed to ReLU-based constraints), in which case the resulting cuts reduce to linear normalized Lagrangian cuts. Recent literature (e.g., \citealt{yangyang2025}) also proposes an alternative approach to producing tight, Pareto-optimal cuts; we contrast this approach with our method in the next section.

\section{Normalization vs. Regularization} \label{sec:norm-vs-reg}

We first revisit recently proposed regularization-based approaches for constructing strong cuts when dual degeneracy can yield cuts with poor approximation quality. We then relate these approaches to normalization and highlight the advantages of the normalization perspective.

The key idea behind regularization is to characterize (or implicitly represent) the set of all optimal solutions to dual \eqref{relu:dual}, and then select a desired solution by optimizing a regularized objective over this set. This objective augments the original dual objective with additional terms involving the dual variables, weighted by appropriately chosen coefficients that ensure tight, Pareto-optimal cuts. We refer to this approach as regularization in analogy with machine learning, where one augments a loss function with penalty terms to bias the optimizer toward solutions with preferred structure.

Formally, let $\Pi_n(\hat{x}_{a(n)})$ denote the set of all optimal solutions of the ReLU dual \eqref{relu:dual}. Given a core point $(\ti{u}_n^+, \ti{u}_n^-) \in \relint(\conv(Z_{\hat{x}_{a(n)}}^{lift}))$, \citealt{yangyang2025} prove that the cut coefficients resulting from solving the following problem: 
\begin{align} \label{reg:dual}
    \max_{\pi_n^+, \pi_n^- \in \Pi_n(\hat{x}_{a(n)})} 
    \mc{L}^R_n(\pi_n^+, \pi_n^-; \hat{x}_{a(n)}) - (\pi_n^+,\pi_n^-) \pmat{\ti{u}_n^+ \\ \ti{u}_n^-},
\end{align}
produces a tight and Pareto-optimal cut of the form \eqref{cut:ReLU}. This approach is inspired by the classical work of \citealt{magnanti1981accelerating} on accelerating Benders decomposition when LP subproblems have dual degeneracy. \citealt{yangyang2025} show that the set $\Pi_n(\hat{x}_{a(n)})$ can be modeled using the constraint
\begin{align} \label{regYY:opt_set}
    \mc{L}_n^{R}(\pi_n^+, \pi_n^-; \hat{x}_{a(n)}) \geq \ul{Q}_n(\hat{x}_{a(n)}) - \epsilon
\end{align}
where $\epsilon > 0$ is a small value. Since the function  $\mc{L}_n^{R}(\pi_n^+, \pi_n^-; \hat{x}_{a(n)})$ is concave and piecewise linear, it is approximated iteratively using gradient cuts via the level-bundle method (\citealt{lemarechal1995new}). The normalized dual problem \eqref{norm:dual} is also solved using the level-bundle method. Moreover, note that the Lagrangian relaxation $\mc{L}_n$ in the extended space is simply a mixed-integer reformulation of the Lagrangian relaxation $\mc{L}_n^{R}$ in the original space. Consequently, the computational complexity of solving the regularized and normalized duals is comparable.

\citealt{deng2024relu} follow a similar approach to \eqref{reg:dual} but instead of approximating the set $\Pi_n(\hat{x}_{a(n)})$ exactly using convex program of the form \eqref{regYY:opt_set}, they approximate it using an LP. Specifically, instead of using the function $\mc{L}_n^{R}(\pi_n^+, \pi_n^-; \hat{x}_{a(n)})$ defined in \eqref{relu:lagrn_relax}, they use the LP relaxation of $\mc{L}_n^{R}(\pi_n^+, \pi_n^-; \hat{x}_{a(n)})$ with additional constraints to prevent the approximate linear program from becoming infeasible. Furthermore, the authors discuss only the choice of regularization coefficients that yield a bounded linear program, but provide no guarantees regarding cut quality. The motivation for this LP-based approach is to recover extreme points of $\Pi_n(\hat{x}_{a(n)})$, which correspond to facet-defining cuts in the extended space. However, because the LP only approximates the true optimal set, it may introduce spurious extreme points or exclude existing ones, thereby producing weaker cuts. We empirically compare our normalization approach with both regularization-based methods in the computational results section.

Next, we show that any cut attained by the regularization method \eqref{reg:dual} can also be attained by solving the normalized dual problem \eqref{norm:dual} using appropriate normalization coefficients.
\begin{proposition} \label{prop:norm_vs_reg}
    For $(\ti{u}_n^+, \ti{u}_n^-) \in \relint(\conv(Z_{\hat{x}_{a(n)}}^{lift}))$, consider a cut of the form \eqref{cut:ReLU} obtained by solving the regularized dual \eqref{reg:dual}. Then, there exist normalization coefficients $(\hat{u}_n^+, \hat{u}_n^-, \hat{u}_{n0})$ such that the coefficients of the regularized cut (up to scaling) are also optimal to the normalized dual \eqref{norm:dual}.
\end{proposition}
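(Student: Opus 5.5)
Let $(\bar\pi_n^+,\bar\pi_n^-)$ be an optimal solution of the regularized dual \eqref{reg:dual}, so $\mc{L}^R_n(\bar\pi_n^+,\bar\pi_n^-;\hat{x}_{a(n)}) = \ul{Q}_n(\hat{x}_{a(n)}) =: Q_0$ by strong duality, and the cut is tight. We will show that $(\bar\pi_n^+,\bar\pi_n^-,1)$, suitably scaled, is optimal for \eqref{norm:dual} with normalization coefficients
\[
(\hat{u}_n^+,\hat{u}_n^-) := (\ti{u}_n^+,\ti{u}_n^-), \qquad \hat{u}_{n0} := \bar{c} - \hat{\theta}_n,
\]
where $\bar{c} := Q_0 - (\bar\pi_n^+,\bar\pi_n^-)\cdot(\ti{u}_n^+,\ti{u}_n^-)$ is the value of the regularized cut at the core point, and $\hat\theta_n < Q_0$. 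Using Proposition~\ref{prop:lifted_lagrangian}, we work throughout in the lifted space. There $\mc{L}_n(\pi^+,\pi^-,1)=\mc{L}^O_n(\pi^+,\pi^-) = \mc{L}^R_n(\pi^+,\pi^-)$, and the regularized cut is an affine minorant $\ell(w)=Q_0-\bar\pi\cdot w$ of $\ol{\co}(\ul{Q}_n')$ that is tight at $(\mbf{0},\mbf{0})$.

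The key steps are as follows.

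\textbf{Step 1: reinterpret the regularized problem.} Problem \eqref{reg:dual} maximizes $\ell(\ti u)$ over all tight affine minorants. First, we argue that $\ell(\ti u)=\ol{\co}(\ul{Q}_n')(\ti u)$ when the core point is taken on the segment toward $(\mbf{0},\mbf{0})$ close enough (as in Part~\ref{proof:tight_relu:2}). More generally, $\ell$ maximizes the value at $\ti u$ among valid minorants that are tight at $(\mbf 0,\mbf 0)$.

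\textbf{Step 2: identify the projection point.} Via the LP form \eqref{norm:feas} (Lemma~3.21 and Corollary~3.22 of \citealt{fullnernew}), the normalized dual value equals $\eta^*$, the step length from $(\mbf0,\mbf0,\hat\theta_n)$ along $(\hat u,\hat u_{n0})$ until $\epi(\ol{\co}(\ul{Q}_n'))$ is reached. The optimal normalized cuts are exactly the supporting hyperplanes at that point. With our choice of $\hat u_{n0}$, the point at $\eta=1$ is $(\ti u, \bar c)$, which lies on the hyperplane of the regularized cut.

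\textbf{Step 3: check feasibility and optimality.} Scale $(\bar\pi^+,\bar\pi^-,1)$ by $1/(\hat u\cdot\bar\pi+\hat u_{n0})$ so that the normalization constraint holds with equality; positivity follows since $\hat u\cdot\bar\pi + \bar c - \hat\theta_n = Q_0-\hat\theta_n>0$. The objective value is then $(Q_0-\hat\theta_n)/(Q_0-\hat\theta_n)=1$. Weak duality with \eqref{norm:feas} gives value $\le\eta^*$, so it suffices to show $\eta^*\ge 1$. This holds if the segment from $(\mbf0,\hat\theta_n)$ to $(\ti u,\bar c)$ has its relative interior outside the epigraph. Since the segment lies strictly below the valid cut hyperplane for $t<1$ (the cut passes through $(\mbf0,Q_0)$ above $\hat\theta_n$), and the cut is a valid minorant, these points are indeed not in the epigraph. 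So $\eta^*=1$ and the scaled regularized coefficients are optimal.

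The main obstacle is the subtlety that Step 3 appears not to use the regularization optimality at all. Any tight valid cut seems attainable this way, which is plausible, since the statement only asserts existence. I must double-check that validity of $\ell$ on $\conv(Z^{lift})$ is what guarantees points with $t<1$ are below the epigraph, and handle the $\hat\theta_n\ge Q_0$ case separately: there the value is $0$ by Proposition~\ref{prop:norm_cut_existence}, and we instead compare by choosing $\hat\theta_n<Q_0$ artificially, or note that both duals then trivially admit the cut. Finally, I must verify that $(\hat u,\hat u_{n0})$ lies in the relative interior of the shifted epigraph when Pareto-optimality is also desired; this requires $\bar c>\ol{\co}(\ul{Q}'_n)(\ti u)$ or equality, and may need enlarging $\hat u_{n0}$ slightly, which I expect to be the delicate point.
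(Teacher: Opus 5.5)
\textbf{The gap: Step 3 proves the wrong direction.} Your scaled point $(\bar\pi_n^+,\bar\pi_n^-,1)/(Q_0-\hat\theta_n)$ is feasible with objective value $1$, so weak duality with \eqref{norm:feas} already gives $\eta^*\ge 1$. Your observation that the segment lies below the cut for $t<1$ only re-derives this trivial direction. Optimality needs $\eta^*\le 1$, that is, $(\ti{u},\bar c)\in\epi(\ol{\co}(\ul{Q}_n'))$, or equivalently $\bar c\ge\ol{\co}(\ul{Q}_n')(\ti{u})$. Because the cut is a valid minorant, this means the regularized cut must touch the envelope at $\ti{u}$.

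That is false in general. Problem \eqref{reg:dual} maximizes the cut value at $\ti{u}$ only among cuts tight at $(\mbf 0,\mbf 0)$. An affine minorant touching the envelope at both $(\mbf 0,\mbf 0)$ and $\ti{u}$ forces the envelope to be affine on the segment between them, which need not hold.

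\textbf{Counterexample.} Take $Z_{a(n)}=[0,2]$, $\hat x_{a(n)}=1$, $\ul{Q}_n(x)=Q_0+\max\{0,2|x-1|-1\}$ and $\ti{u}=(1/3,1/3)$. Then $\Pi_n(\hat x_{a(n)})=\{\pi_n^+\ge 0,\ \pi_n^-\ge 0\}$. Hence the regularized cut is $\theta_n\ge Q_0$ and $\bar c=Q_0$, whereas $\ol{\co}(\ul{Q}_n')(1/3,1/3)=Q_0+1/3$. Use your coefficients $\hat u=(1/3,1/3)$ and $\hat u_{n0}=D:=Q_0-\hat\theta_n$. The multipliers $(-s,-s,1)/(D-2s/3)$ satisfy the normalization with equality, since $\mc L_n(-s,-s,1)=Q_0-s/2$. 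Their objective is $(D-s/2)/(D-2s/3)>1$ for small $s>0$, so the scaled regularized coefficients are not optimal. This is the source of the red flag you noticed: your construction never uses optimality in \eqref{reg:dual}, and it fails even for the regularized cut itself.

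\textbf{The fix.} The missing ingredient is the idea you state in Step 1 but never use: place the normalization point on the segment toward the origin.
By Part~\ref{proof:tight_relu:2}, near $(\mbf 0,\mbf 0)$ the envelope equals $Q_0+p(w)$, where $p(w)=\max_{j\in I^\star}(\alpha_j,\beta_j)\cdot w$ is positively homogeneous. Every $j\in I^\star$ gives an element of $\Pi_n(\hat x_{a(n)})$. Every element of $\Pi_n(\hat x_{a(n)})$ satisfies $-\pi\cdot w\le p(w)$ near the origin, hence on the whole cone by homogeneity. So optimality in \eqref{reg:dual} gives $-\bar\pi\cdot\ti{u}=p(\ti{u})$, and therefore $\ell(t\ti{u})=\ol{\co}(\ul{Q}_n')(t\ti{u})$ for $t\in[0,\epsilon']$. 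This is the paper's Part~\ref{proof:reg_prop}.

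Now take $\hat u=t\ti{u}$ for such a $t>0$ and $\hat u_{n0}=\ell(t\ti{u})-\hat\theta_n$. Your scaling still yields a feasible point of value $1$. Every feasible $(\pi,\pi_0)$ satisfies
$\mc L_n(\pi,\pi_0)-\pi_0\hat\theta_n\le\pi\cdot\hat u+\pi_0\bigl(\ol{\co}(\ul{Q}_n')(\hat u)-\hat\theta_n\bigr)=g_n(\pi,\pi_0)\le 1$.
The first inequality holds because $\mc L_n$ minimizes a linear function with $\pi_0\ge 0$ over the epigraph, hence over its closed convex hull. This closes the proof.

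\textbf{Comparison with the paper.} Once repaired, your route is a direct optimality certificate and is shorter than the paper's. The paper fixes $\hat u_{n0}$ via Proposition~\ref{prop:tight_relu} and takes the normalized optimum $(\hat\pi_n^+,\hat\pi_n^-,\hat\pi_{n0})$. It then shows that the rescaled regularized multipliers match its normalization value and objective, because both cuts support the envelope at $\eta\hat u$. Finally, your Pareto-optimality worry does not arise: validity forces $\bar c\le\ol{\co}(\ul{Q}_n')(\ti{u})$, so the strict case cannot occur, and the statement does not ask for Pareto-optimality anyway.
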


\begin{proof}
We prove the result in the following steps:
\begin{enumerate}[label=8.\arabic*]
    \item \label{proof:reg_prop} For a given core point $(\ti{u}_n^+, \ti{u}_n^-)$, consider the regularized dual in \eqref{reg:dual}. Let $(\ti{\pi}_n^+, \ti{\pi}_n^-)$ be any optimal solution of this dual problem. We first show that the resulting cut of the form:
    \begin{align} \label{eq:regularized_cut}
        \theta_n \geq \mc{L}^R_n(\ti{\pi}_n^+, \ti{\pi}_n^-; \hat{x}_{a(n)}) - (\ti{\pi}_n^+, \ti{\pi}_n^-) \cdot  \begin{pmatrix} w_n^+ \\ w_n^-\end{pmatrix}, 
    \end{align}
    supports $\ol{\co}(\ul{Q}_n'(\cdot, \cdot; \hat{x}_{a(n)}))$ on the segment $\mc{S} = \br{t (\ti{u}_n^+, \ti{u}_n^-) : t \in [0, \epsilon']}$, for some $\epsilon' > 0$. 
    
    Since $(\ti{\pi}_n^+,\ti{\pi}_n^-)\in\Pi_n(\hat{x}_{a(n)})$, we have by (strong) duality that
    \begin{align*}
        \mc{L}^R_n(\ti{\pi}_n^+,\ti{\pi}_n^-;\hat{x}_{a(n)})
        &= \ul{Q}_n(\hat{x}_{a(n)}) = \ol{\co}(\ul{Q}_n'(\mbf{0},\mbf{0};\hat{x}_{a(n)})),
    \end{align*}
    and therefore \eqref{eq:regularized_cut} passes through
    $
        \big((\mbf{0},\mbf{0}),\,\ol{\co}(\ul{Q}_n'(\mbf{0},\mbf{0};\hat{x}_{a(n)}))\big)
    $
    in the lifted space.
    
    Moreover, by Part~\ref{proof:tight_relu:2} of Proposition~\ref{prop:tight_relu}, there exist $\epsilon>0$, a finite index set $I^\star$, and vectors $\{(\alpha_j,\beta_j)\}_{j\in I^\star}$ such that, for all $(w_n^+,w_n^-)\in B_\epsilon(\mbf{0},\mbf{0})\cap \conv(Z_{\hat{x}_{a(n)}}^{lift})$,
    \begin{equation} \label{eq:local_max_rep_norm_vs_reg}
        \ol{\co}(\ul{Q}_n'(w_n^+,w_n^-;\hat{x}_{a(n)}))
        = \ul{Q}_n(\hat{x}_{a(n)}) + \max_{j\in I^\star}\left\{\alpha_j^\top w_n^+ + \beta_j^\top w_n^- \right\}.
    \end{equation}
    In particular, every affine function on the right-hand side of \eqref{eq:local_max_rep_norm_vs_reg} is a supporting hyperplane of $\ol{\co}(\ul{Q}_n'(\cdot,\cdot;\hat{x}_{a(n)}))$ that is tight at $(\mbf{0},\mbf{0})$. Since \eqref{reg:dual} maximizes a linear functional over the set $\Pi_n(\hat{x}_{a(n)})$ of dual-optimal cut coefficients (all of which are tight at $(\mbf{0},\mbf{0})$), the optimality of $(\ti{\pi}_n^+,\ti{\pi}_n^-)$ implies that
    \begin{align*}
        (-\ti{\pi}_n^+,-\ti{\pi}_n^-)\cdot \pmat{\ti{u}_n^+\\ \ti{u}_n^-}
        &= \max_{j\in I^\star} \left\{(\alpha_j,\beta_j)\cdot \pmat{\ti{u}_n^+\\ \ti{u}_n^-}\right\} =: \kappa.
    \end{align*}
    Choose
    \begin{align*}
        \epsilon' \;:=\; \min\left\{\,1,\ \frac{\epsilon}{\left\|(\ti{u}_n^+,\ti{u}_n^-)\right\|_2}\right\},
    \end{align*}
    so that $t (\ti{u}_n^+,\ti{u}_n^-)\in B_\epsilon(\mbf{0},\mbf{0})\cap \conv(Z_{\hat{x}_{a(n)}}^{lift})$ for all $t\in[0,\epsilon']$.
    Then for any $t\in[0,\epsilon']$, using \eqref{eq:local_max_rep_norm_vs_reg} and the definition of $\kappa$, we obtain
    \begin{align*}    \ol{\co}\big(\ul{Q}_n'(t\,\ti{u}_n^+,t\,\ti{u}_n^-;\hat{x}_{a(n)})\big)
        &= \ul{Q}_n(\hat{x}_{a(n)}) + \max_{j\in I^\star}\left\{t\,(\alpha_j,\beta_j)\cdot \pmat{\ti{u}_n^+\\ \ti{u}_n^-}\right\} \\
        &= \ul{Q}_n(\hat{x}_{a(n)}) + t\,\kappa \\
        &= \mc{L}^R_n(\ti{\pi}_n^+,\ti{\pi}_n^-;\hat{x}_{a(n)}) - (\ti{\pi}_n^+,\ti{\pi}_n^-)\cdot \pmat{t\,\ti{u}_n^+\\ t\,\ti{u}_n^-}.
    \end{align*}
    Hence, \eqref{eq:regularized_cut} is tight (and therefore supports $\ol{\co}(\ul{Q}_n'(\cdot,\cdot;\hat{x}_{a(n)}))$) at every point of the segment $\mc{S}$.
    \item \label{proof:norm_prop} Next, we consider a point $(\hat{u}_n^+, \hat{u}_n^-) \in \mc{S} \cap B_{\epsilon}(\mbf{0}, \mbf{0})$ where the ball $B_{\epsilon}(\mbf{0}, \mbf{0})$ is defined in Proposition \ref{prop:tight_relu}. Such a point always exists because by choosing $t$ sufficiently small, we can ensure that $t (\ti{u}_n^+, \ti{u}_n^-) \in B_{\epsilon}(\mbf{0}, \mbf{0})$. The scalar $\hat{u}_{n0}$ is then chosen to satisfy the requirements in Proposition \ref{prop:tight_relu}, that is, $\hat{u}_{n0} = \ul{Q}_n'(\hat{u}_n^+, \hat{u}_n^-; \hat{x}_{a(n)}) - \hat{\theta}_n$. Now, by Corollary~3.22 of \citealt{fullnernew} (see also discussion around \eqref{proof:proj_point}), the normalized dual \eqref{norm:dual} with normalization coefficients $(\hat{u}_n^+, \hat{u}_n^-, \hat{u}_{n0})$ produces a cut of the form \eqref{norm:cut} that supports $\ol{\co}(\ul{Q}_n'(\cdot, \cdot; \hat{x}_{a(n)}))$ at a point $\eta (\hat{u}_n^+, \hat{u}_n^-)$ for some $0<\eta\le 1$ (and $(\mbf{0}, \mbf{0}) \in \mc{S}$).

    \item \label{proof:norm_opt} Let the optimal solution of the normalized dual be $(\hat{\pi}_n^+, \hat{\pi}_n^-, \hat{\pi}_{n0})$. We know that the resulting cut is tight at $(\mbf{0}, \mbf{0})$; this follows from Proposition~\ref{prop:tight_relu}. This implies that the following equality holds,
    \begin{align*}
        \hat{\pi}_{n0} \ul{Q}_n(\hat{x}_{a(n)}) = \mc{L}_n(\hat{\pi}_n^+, \hat{\pi}_n^-, \hat{\pi}_{n0}; \hat{x}_{a(n)}).
    \end{align*}
    Therefore, the optimal objective value in the normalized dual is $\hat{\pi}_{n0} (\ul{Q}_n(\hat{x}_{a(n)}) - \hat{\theta}_n)$ for the solution $(\hat{\pi}_n^+, \hat{\pi}_n^-, \hat{\pi}_{n0})$.

    \item \label{proof:norm_support} From Part \ref{proof:norm_prop}, the normalized cut supports $\ol{\co}(\ul{Q}_n'(\cdot, \cdot; \hat{x}_{a(n)}))$ at $\eta (\hat{u}_n^+, \hat{u}_n^-)$, which means that:
    \begin{align*}
        \ul{Q}_n(\hat{x}_{a(n)}) - \frac{\eta}{\hat{\pi}_{n0}}(\hat{\pi}_n^+, \hat{\pi}_n^-)\cdot \pmat{\hat{u}_n^+ \\ \hat{u}_n^-} = \ol{\co}\big(\ul{Q}_n'(\eta \hat{u}_n^+, \eta \hat{u}_n^-; \hat{x}_{a(n)})\big).
    \end{align*}

    \item \label{proof:reg_suport} From Part \ref{proof:reg_prop}, the regularized cut \eqref{eq:regularized_cut}  also supports $\ol{\co}(\ul{Q}_n'(\cdot, \cdot; \hat{x}_{a(n)}))$ at $\eta (\hat{u}_n^+, \hat{u}_n^-)$, which implies that 
    \begin{align*}
    \ul{Q}_n(\hat{x}_{a(n)}) - \eta (\ti{\pi}_n^+, \ti{\pi}_n^-) \cdot  \begin{pmatrix} \hat{u}_n^+ \\ \hat{u}_n^-\end{pmatrix} = \ol{\co}\big(\ul{Q}_n'(\eta \hat{u}_n^+, \eta \hat{u}_n^-; \hat{x}_{a(n)})\big).
    \end{align*}
    \item Now, we prove that some scaling of the regularized cut is also optimal to the normalized dual. In particular, the solution of interest is $(\ti{\pi}_n^+, \ti{\pi}_n^-, 1)$. Note, that the regularized cut \eqref{relu_linear:cut} with coefficients $(\ti{\pi}_n^+, \ti{\pi}_n^-)$ is the same as the normalized cut \eqref{cut:lagrn_relu} with coefficients $(\ti{\pi}_n^+, \ti{\pi}_n^-, 1)$. 
    Now, we consider the following scaled solution  $(\hat{\pi}_{n0}  \ti{\pi}_n^+, \hat{\pi}_{n0}  \ti{\pi}_n^-, \hat{\pi}_{n0})$, and on substituting in function $g_n$ defining the normalization constraint, we have
    \begin{align*}
        g_n(\hat{\pi}_{n0}  \ti{\pi}_n^+, \hat{\pi}_{n0}  \ti{\pi}_n^-, \hat{\pi}_{n0})
        &= \hat{\pi}_{n0}  \left( (\ti{\pi}_n^+, \ti{\pi}_n^-) \cdot \begin{pmatrix} \hat{u}_n^+ \\ \hat{u}_n^-\end{pmatrix} + \hat{u}_{n0} \right) \\
        &= \hat{\pi}_{n0}  \frac{1}{\eta} \left( \ul{Q}_n(\hat{x}_{a(n)})  -  \ol{\co}\big(\ul{Q}_n'(\eta \hat{u}_n^+, \eta \hat{u}_n^-; \hat{x}_{a(n)})\big) \right) + \hat{\pi}_{n0} \hat{u}_{n0}  \\
        &= (\hat{\pi}_n^+, \hat{\pi}_n^-)\cdot \pmat{\hat{u}_n^+ \\ \hat{u}_n^-} + \hat{\pi}_{n0} \hat{u}_{n0} \leq 1.
    \end{align*}
    The second equality follows from Part \ref{proof:reg_suport} and the third equality follows from Part \ref{proof:norm_support}. The final inequality follows from the fact that $(\hat{\pi}_n^+, \hat{\pi}_n^-, \hat{\pi}_{n0})$ is a feasible solution to the normalized dual problem. This means that the solution $(\hat{\pi}_{n0}  \ti{\pi}_n^+, \hat{\pi}_{n0}  \ti{\pi}_n^-, \hat{\pi}_{n0})$ is also feasible to the normalized dual problem. Furthermore, since the scaled cut associated with $(\hat{\pi}_n^+, \hat{\pi}_n^-, \hat{\pi}_{n0})$ is tight, using a similar argument as in Part \ref{proof:norm_opt}, it has the same objective value as the optimal solution $(\hat{\pi}_n^+, \hat{\pi}_n^-, \hat{\pi}_{n0})$. So, it must be optimal as well. This completes the proof. 
\end{enumerate}
\end{proof}
The converse, however, does not hold: normalization can generate cuts that cannot be obtained from the regularized problem. In particular, Pareto-optimal cuts that are not tight at the incumbent can be attained via the normalized dual but are excluded by regularization-based approaches. We illustrate this distinction with a small example below.

\begin{example} \label{ex:norm_vs_reg}
    We consider a two-stage MSIP with a scalar state variable $x_{a(n)} \in Z_{a(n)} := [0,3]$ and second-stage value function
    \begin{align*}
        Q_n(x_{a(n)}) \;:=\; \min\{\, x_n \;|\; 1.5\,x_n \ge x_{a(n)},\; x_n \in \{0,1,2,3\}\,\}.
    \end{align*}
    For the incumbent $\hat{x}_{a(n)} = 1$ with $\hat{\theta}=0.1$, Figure~\ref{fig:norm_vs_reg} illustrates the value function together with two valid cuts, shown both in the original space and in the lifted space. The cut depicted with dashed lines is tight at the incumbent and Pareto-optimal. By contrast, the cut depicted with dotted lines is not tight at $\hat{x}_{a(n)}$ but remains Pareto-optimal. The latter cut can be obtained via normalization but not via regularization-based approaches, because its coefficients are infeasible in the regularization problem. In particular, the regularization constraint $\pi_n^+, \pi_n^- \in \Pi_n(\hat{x}_{a(n)})$ or its $\epsilon$-approximation \eqref{regYY:opt_set}, is violated because the dotted cut is not tight.

    \begin{figure}[ht]
        \centering
        \begin{subfigure}{0.48\textwidth}
            \centering
            \raisebox{5.5em}{\includegraphics[width=\textwidth]{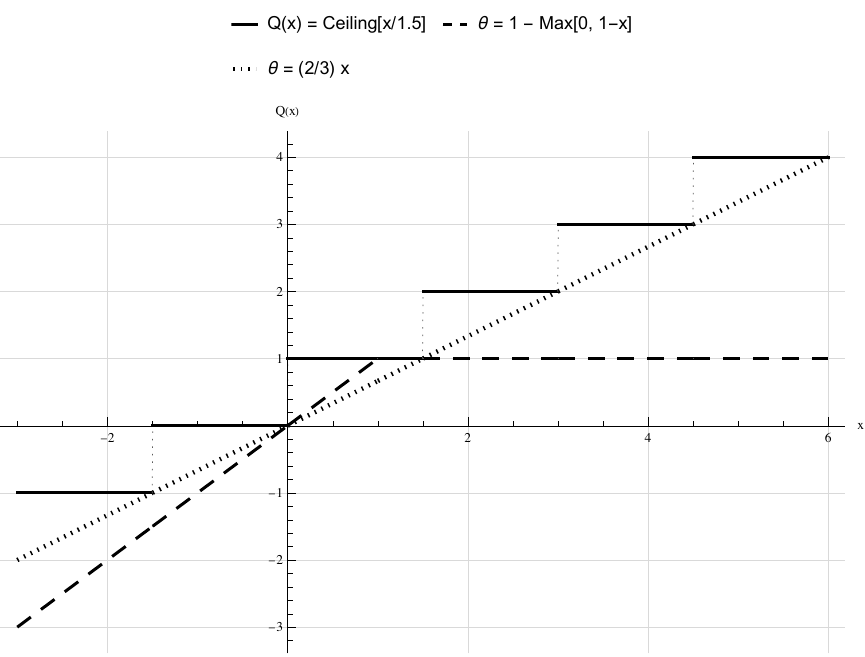}}
        \end{subfigure}
        \hfill
        \begin{subfigure}{0.48\textwidth}
            \centering
            \includegraphics[width=\textwidth]{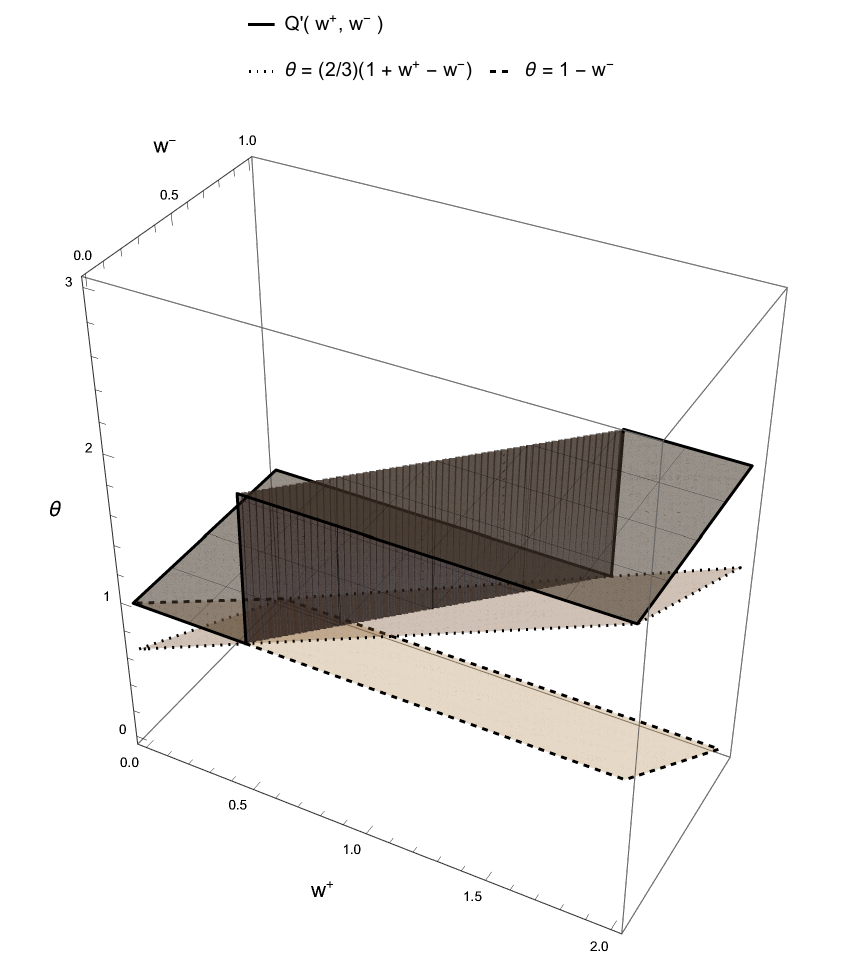}
        \end{subfigure}
        \caption{Left: $Q_n(\cdot)$ and the two cuts over the original domain $Z_{a(n)}$. Right: the corresponding representations in the lifted space $Z^{\mathrm{lift}}_{\hat{x}_{a(n)}}$. The value function is shown in black; the cuts are shown with dotted and dashed boundaries.}
        \label{fig:norm_vs_reg}
    \end{figure}
\end{example}
Proposition \ref{prop:norm_vs_reg} and Example \ref{ex:norm_vs_reg} show that normalization provides an additional flexibility: cuts need not be both tight and Pareto-optimal simultaneously. With an appropriate choice of normalization coefficients, both properties can be enforced simultaneously. However, normalization also permits alternative choices that yield Pareto-optimal cuts that are not tight at the incumbent, while still cutting off the incumbent solution. Another advantage of normalization is that it operates on the epigraph $\epi(Q_m)$, thereby allowing the incorporation of both optimality and feasibility cuts within a unified framework. In contrast, regularization-based approaches work directly with the value function $Q_m$ and therefore rely on the assumption of relatively complete recourse. For ease of exposition, we adopt this assumption throughout the paper; however, our results extend naturally to settings in which relatively complete recourse does not hold.

Our results also extend to the classical Benders/LP setting. For classical Benders cuts, previous work (\citealt{brandenberg2021refined, hosseini2021deepest}) shows that both normalization and the regularization framework of \citealt{magnanti1981accelerating} can attain Pareto-optimal cuts. When multiple such cuts exist, however, whether normalization could achieve the same cut as regularization remained unknown. Proposition \ref{prop:norm_vs_reg} resolves this question: with an appropriate choice of normalization constraints, the coefficients of any Pareto-optimal cut obtained via regularization are also optimal in the normalization dual.

\section{Computational Results} \label{sec:comp}

In this section, we report our computational study to assess how normalization and regularization affect the strength of the generated cuts. Specifically, we compare our normalization approach with the two regularization-based methods of \citealt{deng2024relu} and \citealt{yangyang2025}. Since neither implementation is publicly available, we reimplement both approaches.

For \citealt{deng2024relu}, we follow their LP-based approximation of the set $\Pi_n(\hat{x}_{a(n)})$ in \eqref{reg:dual}. When this LP becomes infeasible, we add the no-good cut proposed in their paper to restore feasibility. To ensure boundedness of the LP approximation, we select the objective coefficients $(\tilde{u}_n^+,\tilde{u}_n^-)$ in \eqref{reg:dual} using their Strategy~2, which is designed to guarantee boundedness.

For \citealt{yangyang2025}, several algorithmic details of the level-bundle implementation are not specified. In particular, the dual model—approximated iteratively via gradient cuts—requires explicit bounds on the dual variables to prevent infeasibility or unboundedness of the approximation. We therefore impose artificial bounds chosen as sufficiently large multiples of the integer L-shaped cut coefficients. The level-bundle method also requires an initial point; we set it to the scaled L-shaped coefficients, which, in most instances, satisfy the regularization constraint \eqref{regYY:opt_set}.

We evaluate the proposed and existing methods on two-stage instances of the dynamic capacity allocation problem (DCAP) and the capacitated lot-sizing problem (CLSP), using the formulations and data-generation procedures of \citealt{deng2024relu} and \citealt{fullnernew}, respectively. Our implementation and datasets are available at \url{https://github.com/akulbansal5/RNorm.git}. To the best of our knowledge, it is the first publicly available open-source implementation that ensures (asymptotic) convergence for general MSIPs and supports both normalization and regularization for generating strong cuts. In contrast, the SDDP package (\citealt{dowson2021sddp}) implements Lagrangian cuts that do not guarantee convergence for general MSIPs and can be weak due to dual degeneracy. The open-source implementation of \citealt{fullnernew} employs normalization to strengthen Lagrangian cuts but does not guarantee convergence for general MSIPs.

The stochastic programs and the cut-generation methods are implemented in Julia 1.9 using the JuMP package (\citealt{dunning2017jump}). All optimization models are solved using Gurobi 12.0. Our cut-generation methods are implemented using a multi-cut strategy; we also tested a single-cut implementation, but it consistently performed worse than the multi-cut approach. The stopping criterion is triggered when one of the following conditions is met: (a) the optimality gap falls below 0.1\%, (b) 5000 iterations are reached, (c) a 3600-second time limit is exceeded, or (d) no improvement larger than $10^{-9}$ is observed in both the lower and upper bounds for 10 consecutive iterations. Criterion (d) is included to prevent the methods from reaching the time limit when the bounds have stalled.

Both the normalized dual \eqref{norm:dual} and regularized dual \eqref{reg:dual} are solved using the level bundle method (\citealt{lemarechal1995new}). To ensure a fair comparison between these approaches, we use identical parameters for the level bundle method: a convergence tolerance of $10^{-2}$ and a maximum of 300 iterations. The parameter $\epsilon$ in set $\Pi_n(\hat{x}_{a(n)})$ approximation \eqref{regYY:opt_set} is also set to $10^{-2}$.

The choice of the core point is made to satisfy the conditions in Proposition \ref{prop:core_point}. For components $k$ of the incumbent vector that lie on the boundary, we set $u_{nk}^+ = 10^{-3}$ and $u_{nk}^- = 0$ when $\hat{x}_{nk} = 0$. Similarly, when $\hat{x}_{nk} = B_k$, we set $u_{nk}^- = 10^{-3}$ and $u_{nk}^+ = 0$. This core point selection is identical for both the regularization and normalization methods. For the normalization method, the scalar $u_{n0}$ is set to $\ul{Q}_n(\hat{x}_{a(n)}) - \hat{\theta}_n + \epsilon$, where $\epsilon = 10^{-6}$.

We first report the results for the DCAP instances in Table \ref{tab:dcap_altFalse}. The columns $I$, $J$, $N$, and $S$ describe the instance characteristics: $I$ denotes the number of resources, $J$ the number of tasks to which resources are assigned, $N$ the number of scenarios, and $S$ the number of time periods. For each instance class, we randomly sample three instances and report the average results across them. All instances considered are two-stage problems; the parameter $S$ affects the instance size but does not change the number of stages. The column ``App.'' indicates the approach used: ``Norm'' for normalization, ``R-LP'' for LP-based regularization proposed by \citealt{deng2024relu}, and ``Reg'' for \citealt{yangyang2025}'s regularization approach, where the dual in \eqref{reg:dual}-\eqref{regYY:opt_set} is solved exactly using the level-bundle method. The columns ``Iter'' and ``Time'' report the number of iterations and computation time (in seconds), respectively. The columns ``UB'' and ``LB'' denote the upper bound and lower bound at termination, and ``Gap (\%)'' reports the relative optimality gap. Column ``D-Iter'' denotes the average number of iterations required to solve the dual problem.

From Table~\ref{tab:dcap_altFalse}, we observe that normalization reaches the target gap (below \(0.1\%\)) in fewer decomposition iterations (``Iter") than regularization, indicating that it produces stronger cuts and a more accurate approximation of the value function. However, the regularization approach (``Reg'') typically achieves lower overall solution times. This time advantage is primarily due to the cost of solving the dual problem: the regularized dual is cheaper to solve than the normalized dual, and its associated level-bundle procedure usually requires fewer dual iterations (D-Iter). As a result, even when regularization requires more iterations (Iter) in the decomposition algorithm, it can still yield shorter overall solution times. This effect is particularly visible for \((I,J,N,S)=(3,4,100,5)\) and \((4,5,100,6)\), where normalization uses fewer outer iterations and fewer dual iterations, yet has a larger total solution time than regularization. We attribute this to the proximal step in the level-bundle method, which solves a quadratic program that is more difficult to solve, partly due to the additional variable \(\pi_{n0}\) present in the normalized dual.

Next, we report results for the CLSP instances in Table~\ref{tab:clsp_altFalse}. The columns $P$ and
$N$ denote the number of products and scenarios, respectively. For each $(P,N)$ class, we consider
three randomly sampled instances and report results averaged over these three instances. The columns ``App.'', ``Iter'', ``D-Iter'', ``Time'', ``UB'', ``LB'', and ``Gap (\%)'' are defined as in the previous tables. 

The results in Table~\ref{tab:clsp_altFalse} indicate that normalization outperforms the regularization-based approach on these instances. The method ``R-LP'' makes little
progress within the time limit, terminating with large gaps (approximately $58\%$--$73\%$) and weak
lower bounds. In contrast, both ``Reg'' and ``Norm'' reduce the optimality gap by orders of
magnitude and typically achieve gaps below $0.1\%$ on most instances. Moreover, ``Norm'' generally
requires fewer decomposition iterations and attains better gaps, suggesting
that the normalized method leads to stronger cuts. For larger instances ($P \in \{10,20\}$), both ``Reg'' and ``Norm'' become substantially more
expensive and do not always reach the $0.1\%$ threshold; nevertheless, ``Norm'' still tends to
produce tighter final gaps than ``Reg'' at better or comparable run times.
Unlike the DCAP results, ``Norm'' is also faster than ``Reg'' on the CLSP instances. This is mainly because, for CLSP, the normalized dual usually converges in fewer iterations (``D-Iter") than the regularized dual, and the resulting reduction in the dual solve times improves overall solution time.

For large-scale instances, solving the dual problem required to generate ReLU cuts—whether via the normalized dual~\eqref{norm:dual} or a regularized dual~\eqref{reg:dual}—can become a major computational bottleneck, a phenomenon widely reported in the stochastic programming literature (e.g., \citealt{zou2019stochastic}, \citealt{chen2022generating}, \citealt{fullnernew}, \citealt{yangyang2025}). To reduce this cost, we adopt an enhancement strategy inspired by the alternating cut criterion of \citealt{angulo2016improving}, originally proposed to enhance the integer L-shaped method (\citealt{laporte1993integer}). Rather than generating ReLU cuts at every iteration—which is expensive—we alternate them with cheaper cuts, namely Benders cuts obtained from the LP relaxation of the subproblems. In particular, we compute a ReLU cut only when the Benders cut from the LP relaxation fails to cut off the incumbent solution. This strategy has been shown to significantly improve performance on MSIPs (see \citealt{bansal2024computational}). We report results under this alternating strategy in Tables~\ref{tab:dcap_altTrue} and~\ref{tab:clsp_altTrue} for the DCAP and CLSP instances, respectively. The ``Prop.'' column indicates the proportion of ReLU cuts among all cuts added, including Benders cuts.

Table~\ref{tab:dcap_altTrue} reports results for DCAP instances with the alternating cut generation approach. Comparing these results to those in Table~\ref{tab:dcap_altFalse}, we observe significant improvements in both solution times and final optimality gaps across most instances. The ``Prop.'' column reveals that ReLU cuts are needed in only a small fraction of iterations: for smaller instances such as $(I,J,N,S)=(2,2,10,4)$ and $(2,3,10,4)$, Benders cuts alone suffice (Prop.\ = 0.00--0.03), meaning the problem can be solved without generating expensive ReLU cuts. For larger instances, the proportion increases but remains modest, reaching at most 0.38 for $(I,J,N,S)=(4,5,10,6)$ with the ``Norm'' approach. This translates to Benders cuts successfully cutting off the incumbent solution approximately 62\%--100\% of the time for the ``Reg'' and ``Norm'' approaches. The ``D-Iter'' values appear lower in Table~\ref{tab:dcap_altTrue} compared to Table~\ref{tab:dcap_altFalse}, but this is because ``D-Iter'' now averages over both ReLU cut iterations (which require dual solves) and Benders cut iterations (which require 0 dual iterations). While the alternating criterion does increase the total number of decomposition iterations (e.g., for $(I,J,N,S)=(4,5,10,6)$, ``Reg'' increases from 50 to 66 iterations and ``Norm'' from 28 to 53 iterations) due to the weaker Benders cuts requiring more iterations to close the gap, the computational savings from cheaper cuts per iteration result in significantly faster overall solution times. For instance, solution times for $(I,J,N,S)=(4,5,10,6)$ improve from 1001 to 331 seconds for ``Reg'' and from 1389 to 910 seconds for ``Norm'', despite the increase in decomposition iterations.

The results in Table~\ref{tab:clsp_altTrue} show a more mixed impact of the alternating criterion on CLSP instances compared to DCAP. The ``Prop.'' column reveals that, unlike DCAP instances, CLSP instances require ReLU cuts in a much larger fraction of iterations (typically 60\%--90\%), meaning Benders cuts fail to cut off the incumbent solution in most iterations. Consequently, the alternating strategy provides limited computational savings from cheaper Benders cuts, and we observe modest improvements or slight increases in solution times for smaller instances. However, for larger instances that hit the time limit, the alternating criterion combined with the normalization method yields significant improvements in final optimality gaps. For example, for $(P,N)=(20,10)$ and $(20,100)$, ``Norm'' achieves gaps of 1.91\% and 3.81\%, respectively, compared to gaps of 2.42\% and 6.35\% when the alternating criterion is not applied (as seen in Table~\ref{tab:clsp_altFalse}). Consistent with the DCAP results, the alternating criterion increases the total number of decomposition iterations compared to the non-alternating approach (e.g., for $(P,N)=(5,10)$, ``Reg'' increases from 35 to 46 iterations and ``Norm'' from 29 to 35 iterations), as weaker Benders cuts require more iterations to close the gap.

\begin{table}[t]
\centering
\small
\setlength{\tabcolsep}{4pt}
\renewcommand{\arraystretch}{1.05}
\begin{tabular}{cccc l r r r r r r}
\hline
$I$ & $J$ & $N$ & $S$ & App. & Iter & Time & UB & LB & Gap(\%) & D-Iter \\
\hline
2 & 2 & 10  & 4 & R-LP & 17  & 12   & 1031.57 & 1031.33 & 0.021 & 0  \\
 &  &   &  & Reg  & 6   & 8    & 1031.41 & 1031.24 & 0.016 & 12 \\
 &  &  &  & Norm    & 5   & 16   & 1031.43 & 1030.67 & 0.075 & 14 \\
\hline
2 & 2 & 100 & 4 & R-LP & 6   & 16   & 1094.71 & 1093.72 & 0.09  & 0  \\
 &  & &  & Reg  & 6   & 29   & 1094.28 & 1093.32 & 0.088 & 13 \\
 &  & &  & Norm    & 5   & 35   & 1094.37 & 1093.71 & 0.063 & 13 \\
\hline
2 & 3 & 10  & 4 & R-LP & 24  & 13   & 1499.58 & 1496.82 & 0.196 & 0  \\
& &  &  & Reg  & 10  & 11   & 1497.28 & 1497.11 & 0.012 & 16 \\
& &  &  & Norm    & 7   & 11   & 1497.57 & 1496.95 & 0.041 & 16 \\
\hline
2 & 3 & 100 & 4 & R-LP & 17  & 36   & 1675.32 & 1675.01 & 0.019 & 0  \\
 &  & &  & Reg  & 8   & 53   & 1675.31 & 1674.26 & 0.063 & 15 \\
 &  & &  & Norm    & 7   & 47   & 1675.31 & 1674.87 & 0.026 & 15 \\
\hline
3 & 4 & 10  & 5 & R-LP & 162 & 1695 & 2171.00 & 2127.54 & 1.89  & 0  \\
 &  &   &  & Reg  & 24  & 104  & 2154.60 & 2154.29 & 0.014 & 30 \\
 &  &   &  & Norm    & 16  & 147  & 2154.96 & 2153.82 & 0.053 & 33 \\
\hline
3 & 4 & 100 & 5 & R-LP & 102 & 2486 & 2207.11 & 2201.32 & 0.252 & 0  \\
 &  &  &  & Reg  & 15  & 527  & 2204.29 & 2203.25 & 0.047 & 28 \\
 &  &  &  & Norm    & 12  & 613  & 2204.35 & 2203.40 & 0.042 & 25 \\
\hline
4 & 5 & 10  & 6 & R-LP & 214 & T & 2663.60 & 2493.28 & 6.341 & 0  \\
 &  &  &  & Reg  & 50  & 1001 & 2621.57 & 2619.88 & 0.065 & 46 \\
 & &   &  & Norm    & 28  & 1389 & 2621.37 & 2619.74 & 0.062 & 74 \\
\hline
4 & 5 & 100 & 6 & R-LP & 87  & T & 3047.66 & 2896.04 & 4.953 & 0  \\
 & & &  & Reg  & 26  & 2990 & 3011.89 & 2993.50 & 0.569 & 45 \\
 &  & &  & Norm    & 21  & 3315 & 3006.23 & 3000.85 & 0.172 & 42 \\
\hline
\end{tabular}
\caption{Comparison of normalization and regularization-based approaches on DCAP instances}
\label{tab:dcap_altFalse}
\end{table}

\begin{table}[t]
\centering
\small
\setlength{\tabcolsep}{4pt}
\renewcommand{\arraystretch}{1.05}
\begin{tabular}{cc l r r r r r r}
\hline
$P$ & $N$ & App. & Iter & Time & UB & LB & Gap(\%) & D-Iter \\
\hline
3  & 2   & R-LP & 408 & T & 741.95 & 232.90 & 68.247 & 0  \\
  &    & Reg  & 17  & 6    & 583.84 & 583.61 & 0.046  & 14 \\
 &    & Norm    & 11  & 6    & 546.19 & 545.90 & 0.044  & 13 \\
\hline
5  & 10  & R-LP & 200 & T & 861.80 & 359.72 & 58.248 & 0  \\
  &   & Reg  & 35  & 79   & 804.32 & 803.64 & 0.085  & 35 \\
 &  & Norm    & 29  & 43   & 804.26 & 803.50 & 0.095  & 21 \\
\hline
5  & 50  & R-LP & 149 & T & 868.72 & 360.60 & 58.483 & 0  \\
 &  & Reg  & 28  & 248  & 814.40 & 814.01 & 0.048  & 37 \\
 &  & Norm    & 21  & 94   & 814.48 & 813.83 & 0.081  & 19 \\
\hline
5  & 100 & R-LP & 130 & T & 866.44 & 360.67 & 58.365 & 0  \\
 & & Reg  & 33  & 667  & 811.02 & 810.29 & 0.090  & 36 \\
 & & Norm    & 20  & 164  & 811.03 & 810.37 & 0.081  & 17 \\
\hline
10 & 10  & R-LP & 117 & T & 2539.38 & 696.84  & 72.521 & 0  \\
 &  & Reg  & 80  & T & 1838.53 & 1770.76 & 3.677  & 94 \\
 &  & Norm    & 65  & T & 1834.51 & 1801.39 & 1.802  & 57 \\
\hline
10 & 50  & R-LP & 103 & T & 2552.69 & 697.84  & 72.660 & 0  \\
 &  & Reg  & 40  & T & 1845.30 & 1726.71 & 6.431  & 80 \\
&   & Norm    & 35  & T & 1839.71 & 1778.45 & 3.342  & 59 \\
\hline
10 & 100 & R-LP & 93  & T & 2539.51 & 697.98  & 72.514 & 0  \\
 &  & Reg  & 30  & T & 1860.03 & 1715.24 & 7.797  & 63 \\
&  & Norm    & 26  & T & 1853.86 & 1770.69 & 4.488  & 57 \\
\hline
20 & 10  & R-LP & 81  & T & 3900.79 & 1331.81 & 65.850 & 0  \\
 &  & Reg  & 56  & T & 3128.01 & 2848.67 & 8.930  & 99 \\
 &  & Norm    & 50  & T & 3057.22 & 2983.12 & 2.419  & 98 \\
\hline
20 & 50  & R-LP & 69  & T & 3877.48 & 1331.86 & 65.648 & 0  \\
 &  & Reg  & 30  & T & 3150.86 & 2809.64 & 10.828 & 92 \\
 &   & Norm    & 24  & T & 3108.54 & 2988.41 & 3.861  & 96 \\
\hline
20 & 100 & R-LP & 62  & T & 3888.87 & 1331.87 & 65.751 & 0  \\
 &  & Reg  & 27  & T & 3171.94 & 2793.52 & 11.916 & 92 \\
 & & Norm    & 20  & T & 3160.86 & 2959.74 & 6.351  & 80 \\
\hline
\end{tabular}
\caption{Comparison of normalization and regularization-based approaches on CLSP instances}
\label{tab:clsp_altFalse}
\end{table}

\begin{table}[t]
\centering
\small
\setlength{\tabcolsep}{4pt}
\begin{tabular}{rrrrllrrrrrr}
\toprule
$I$ & $J$ & $N$ & $S$ & App. & Iter & Time & UB & LB & Gap (\%) & D-Iter & Prop. \\
\midrule
2 & 2 & 10  & 4 & R-LP  & 9   & 5    & 1031.4  & 1031.4  & 0      & 0  & 0.00 \\
& & & & Reg   & 9   & 5    & 1031.40 & 1031.40 & 0.000  & 0  & 0.00 \\
& & & & Norm  & 9   & 5    & 1031.40 & 1031.40 & 0.000  & 0  & 0.00 \\
\midrule
2 & 2 & 100 & 4 & R-LP  & 7   & 7    & 1094.28 & 1094.26 & 0.001  & 0  & 0.00 \\
& & & & Reg   & 7   & 6    & 1094.28 & 1094.26 & 0.001  & 0  & 0.00 \\
& & & & Norm  & 7   & 6    & 1094.28 & 1094.26 & 0.000  & 0  & 0.00 \\
\midrule
2 & 3 & 10  & 4 & R-LP  & 10  & 7    & 1497.27 & 1497.27 & 0      & 0  & 0.08 \\
& & & & Reg   & 10  & 6    & 1497.27 & 1497.27 & 0.000  & 1  & 0.03 \\
& & & & Norm  & 10  & 6    & 1497.27 & 1497.27 & 0.000  & 1  & 0.03 \\
\midrule
2 & 3 & 100 & 4 & R-LP  & 19  & 19   & 1675.31 & 1675.27 & 0.002  & 0  & 0.42 \\
& & & & Reg   & 10  & 14   & 1675.31 & 1675.27 & 0.002  & 1  & 0.06 \\
& & & & Norm  & 10  & 12   & 1675.31 & 1675.27 & 0.000  & 1  & 0.06 \\
\midrule
3 & 4 & 10  & 5 & R-LP  & 123 & 140  & 2155.27 & 2153.57 & 0.08   & 0  & 0.77 \\
& & & & Reg   & 33  & 40   & 2154.60 & 2153.76 & 0.038  & 6  & 0.26 \\
& & & & Norm  & 32  & 67   & 2154.61 & 2153.98 & 0.000  & 12 & 0.30 \\
\midrule
3 & 4 & 100 & 5 & R-LP  & 59  & 255  & 2204.69 & 2202.38 & 0.105  & 0  & 0.59 \\
& & & & Reg   & 20  & 161  & 2204.33 & 2203.76 & 0.026  & 4  & 0.13 \\
& & & & Norm  & 20  & 176  & 2204.30 & 2203.64 & 0.000  & 4  & 0.13 \\
\midrule
4 & 5 & 10  & 6 & R-LP  & 268 & 1696 & 2625.66 & 2612.68 & 0.49   & 0  & 0.76 \\
& & & & Reg   & 66  & 331  & 2621.65 & 2620.10 & 0.059  & 12 & 0.34 \\
& & & & Norm  & 53  & 910  & 2621.06 & 2619.73 & 0.001  & 39 & 0.38 \\
\midrule
4 & 5 & 100 & 6 & R-LP  & 222 & T & 3006.03 & 2999.35 & 0.225  & 0  & 0.81 \\
& & & & Reg   & 46  & 1381 & 3005.08 & 3003.36 & 0.058  & 12 & 0.30 \\
& & & & Norm  & 45  & 3070 & 3006.16 & 3002.92 & 0.001  & 13 & 0.29 \\
\bottomrule
\end{tabular}
\caption{Alternating criterion applied to normalization and regularization-based approaches on DCAP instances}
\label{tab:dcap_altTrue}
\end{table}

\begin{table}[!htbp]
\centering
\small
\setlength{\tabcolsep}{4pt}
\begin{tabular}{rrlrrrrrrr}
    \toprule
    $P$ & $N$ & App. & Iter & Time (s) & UB & LB & Gap (\%) & D-Iter & Prop \\
    \midrule
    3  & 2   & R-LP  & 448 & T & 662.68  & 380.24  & 43.032 & 0  & 0.997 \\
     & &  Reg   & 14  & 7    & 583.87  & 583.68  & 0.034  & 14 & 0.818 \\
     & &  Norm  & 11  & 7    & 583.79  & 583.55  & 0.039  & 9  & 0.688 \\
    \midrule
    5  & 10  & R-LP  & 238 & T & 861.44  & 429.23  & 50.174 & 0  & 0.995 \\
     & &  Reg   & 46  & 77   & 804.19  & 803.78  & 0.052  & 29 & 0.899 \\
     & &  Norm  & 35  & 57   & 804.24  & 803.59  & 0.080  & 18 & 0.901 \\
    \midrule
    5  & 50  & R-LP  & 187 & T & 870.56  & 434.98  & 50.034 & 0  & 0.995 \\
     & &  Reg   & 34  & 254  & 814.40  & 813.82  & 0.072  & 29 & 0.877 \\
     & &  Norm  & 28  & 125  & 814.57  & 813.98  & 0.073  & 16 & 0.899 \\
    \midrule
    5  & 100 & R-LP  & 141 & T & 869.15  & 433.32  & 50.136 & 0  & 0.993 \\
     & &  Reg   & 38  & 593  & 811.05  & 810.29  & 0.093  & 27 & 0.879 \\
     & &  Norm  & 26  & 192  & 811.05  & 810.47  & 0.071  & 14 & 0.872 \\
    \midrule
    10 & 10  & R-LP  & 270 & T & 2046.21 & 971.68  & 52.568 & 0  & 0.986 \\
     & &  Reg   & 105 & T & 1840.43 & 1771.32 & 3.750  & 73 & 0.855 \\
     & &  Norm  & 71  & T & 1832.36 & 1800.79 & 1.723  & 50 & 0.903 \\
    \midrule
    10 & 50  & R-LP  & 268 & T & 2089.39 & 958.68  & 54.101 & 0  & 0.985 \\
     & &  Reg   & 78  & T & 1856.18 & 1733.01 & 6.640  & 39 & 0.765 \\
     & &  Norm  & 46  & T & 1836.98 & 1779.64 & 3.128  & 43 & 0.825 \\
    \midrule
    10 & 100 & R-LP  & 233 & T & 2113.81 & 959.55  & 54.603 & 0  & 0.980 \\
     & &  Reg   & 66  & T & 1863.00 & 1713.85 & 8.015  & 30 & 0.699 \\
     & &  Norm  & 40  & T & 1859.48 & 1772.68 & 4.670  & 36 & 0.777 \\
    \midrule
    20 & 10  & R-LP  & 179 & T & 3854.72 & 1413.01 & 63.342 & 0  & 0.994 \\
     & &  Reg   & 116 & T & 3110.37 & 2857.73 & 8.111  & 47 & 0.581 \\
     & &  Norm  & 68  & T & 3045.22 & 2987.04 & 1.908  & 72 & 0.766 \\
    \midrule
    20 & 50  & R-LP  & 77  & T & 3875.89 & 1411.27 & 63.585 & 0  & 0.987 \\
     & &  Reg   & 83  & T & 3102.65 & 2782.38 & 10.318 & 28 & 0.495 \\
     & &  Norm  & 49  & T & 3101.77 & 2990.30 & 3.593  & 49 & 0.689 \\
    \midrule
    20 & 100 & R-LP  & 78  & T & 3886.72 & 1411.67 & 63.679 & 0  & 0.987 \\
     & &  Reg   & 75  & T & 3105.93 & 2719.35 & 12.448 & 23 & 0.457 \\
     & &  Norm  & 48  & T & 3105.17 & 2986.95 & 3.805  & 42 & 0.631 \\
    \bottomrule
\end{tabular}
\caption{Alternating criterion applied to normalization and regularization-based approaches on CLSP instances.}
\label{tab:clsp_altTrue}
\end{table}

To summarize our computational findings, normalization consistently produces stronger cuts than regularization across both DCAP and CLSP instances, as evidenced by fewer decomposition iterations required to reach the target optimality gap. However, solving the normalized dual can be more expensive than solving the regularized dual, leading to mixed results in overall solution times depending on the problem structure. On DCAP instances, regularization often achieves faster solution times despite requiring more iterations, while on CLSP instances, normalization's advantage in both cut strength and dual convergence typically results in faster overall performance. The alternating cut criterion substantially improves performance on DCAP instances by leveraging cheaper Benders cuts, reducing solution times by up to 50\% while maintaining or improving optimality gaps. On CLSP instances, where Benders cuts are less effective, the alternating criterion provides more modest benefits but still yields notable improvements in final gaps for larger instances that hit the time limit, particularly when combined with normalization.

\section{Conclusion} \label{sec:conclusion}
We studied the problem of weak (and potentially ineffective) Lagrangian cuts that arise from dual degeneracy in decomposition methods for multistage stochastic integer programs with mixed-integer state variables. Building on the ReLU-dual framework of \citealt{deng2024relu} and the normalization framework of \citealt{fullnernew}, we introduced a normalized version of the ReLU dual that selects dual solutions through additional normalization constraints. This yields cut coefficients that, in practice, lead to stronger cuts and improved value-function approximations.

On the theoretical side, we established that normalized ReLU cuts can be interpreted directly in the original state space and that normalization can be used to recover strong cuts despite dual degeneracy. In particular, we introduced a notion of Pareto-optimality for nonlinear ReLU cuts in the original space and showed that normalization produces Pareto-optimal cuts under this definition. We also proved that there exists a choice of normalization coefficients that yields cuts that are tight at the current incumbent, providing guidance for selecting normalization coefficients when tightness is desired. Finally, we clarified the connection to recently proposed regularization strategies: any cut obtainable from regularization of the optimal ReLU-dual set can also be obtained via normalization (up to scaling), while normalization is strictly more flexible because it can generate Pareto-optimal cuts that are not necessarily tight at the incumbent, yet still separate the incumbent solution.

Our computational study on DCAP and CLSP instances demonstrates that the theoretical advantages of normalization translate into practical benefits: normalized cuts consistently require fewer decomposition iterations to reach the target gap, confirming their superior strength. The combination of normalization with the alternating cut criterion further enhances performance, particularly when Benders cuts can effectively contribute to the decomposition process. Together, these results suggest that normalization provides a flexible approach to generating strong Lagrangian cuts that can improve the efficiency of decomposition methods for multistage stochastic integer programs.

Several directions remain for future work. First, developing adaptive, instance-dependent strategies for selecting normalization coefficients—beyond the baseline rule used in our experiments—could further improve robustness and speed. Second, combining normalization with cut-selection policies and more advanced bundle-management techniques may reduce the overhead of solving the normalized dual on difficult instances.

\appendix

\section*{Appendix}

\section{Proof of Proposition~\ref{prop:lifted_lagrangian}} \label{app:lifted_lagrangian}

Recall that the subproblem $\ul{Q}'_n(\mbf{0}, \mbf{0}; \hat{x}_{a(n)})$ is reformulated with copy constraints in \eqref{lifted_sub:copy}, and the corresponding Lagrangian relaxation, dual, and Lagrangian cut in the lifted space are given by \eqref{prob:lifted_relax}, \eqref{dual:lifted_lagrn_00}, and \eqref{cut:lifted_lagrn_00}, respectively. The key observation is that the Lagrangian relaxation \eqref{prob:lifted_relax} is a reformulation of the ReLU-based relaxation \eqref{relu:lagrn_relax} using the linearization in \eqref{cut:relu_linear}. Specifically, by substituting the constraints from \eqref{cut:relu_linear} into the ReLU-based relaxation, we obtain $\mc{L}^R_n(\pi_n^+, \pi_n^-; \hat{x}_{a(n)}) = \mc{L}^O_n(\pi_n^+, \pi_n^-; \hat{x}_{a(n)})$. Similarly, with the constraints $(w_n^+, w_n^-) \in Z_{\hat{x}_{a(n)}}^{lift}$ from \eqref{cut:relu_linear}, the Lagrangian cut \eqref{cut:lifted_lagrn_00} is equivalent to the ReLU cut \eqref{cut:ReLU}. This establishes that the ReLU Lagrangian cut \eqref{cut:ReLU}, generated at $\hat{x}_{a(n)}$ for $\epi_{Z_{a(n)}}\left(\un{Q}_n\right)$, corresponds to the Lagrangian cut \eqref{cut:lifted_lagrn_00} generated at $\left(\mathbf{0}, \mathbf{0}\right)$ for the lifted epigraphical set $\epi_{Z^{lift}_{\hat{x}_{a(n)}}} \left(\ul{Q}'_n(\cdot, \cdot; \hat{x}_{a(n)})\right)$. 

For optimal dual multipliers obtained by solving the ReLU dual \eqref{relu:dual}, we have $\mc{L}^R_n(\pi_n^+, \pi_n^-; \hat{x}_{a(n)}) = \ul{Q}_n(\hat{x}_{a(n)})$. Since $\mc{L}^R_n(\pi_n^+, \pi_n^-; \hat{x}_{a(n)}) = \mc{L}^O_n(\pi_n^+, \pi_n^-; \hat{x}_{a(n)})$ and $\ul{Q}_n(\hat{x}_{a(n)}) = \ul{Q}'_n(\mbf{0}, \mbf{0}; \hat{x}_{a(n)})$, it follows that $\mc{L}^O_n(\pi_n^+, \pi_n^-; \hat{x}_{a(n)}) = \ul{Q}'_n(\mbf{0}, \mbf{0}; \hat{x}_{a(n)})$. Therefore, the Lagrangian cut \eqref{cut:lifted_lagrn_00} is tight at $\left(\mathbf{0}, \mathbf{0}\right)$ with respect to the lifted value function $\ul{Q}'_n(\cdot, \cdot; \hat{x}_{a(n)})$.

\section{Proof of Proposition \ref{prop:tight_relu_alpha}} \label{app:proof_tight_relu_alpha}

The proof is largely similar to the proof of Proposition \ref{prop:tight_relu}. We showed in that proof that there exits a ball $B_{\epsilon} (\mbf{0}, \mbf{0}) \subseteq \Proj_{w_n^+, w_n^-} \conv(\mc{W}_{\hat{x}_{a(n)}})$. Now, choose  any $(u_{n}^+, u_n^+) \in \relint(\conv(Z_{\hat{x}_{a(n)}}^{lift}))$, then we know there, exists scalar $\hat{\eta}_n$ small enough such that $\hat{\eta}_n \cdot (u_{n}^+, u_n^+) \in B_{\epsilon}(\mbf{0}, \mbf{0})$. Now, if we choose $u_{n0}$ large enough then we can ensure that $\eta_n^* = \hat{\eta}_n$, and using \eqref{proof:proj_point}, we will have $(\ti{w}_n, \ti{w}_n) \in B_{\epsilon}(\mbf{0}, \mbf{0})$. In particular, we let
$    u_{n0} = \frac{1}{\hat{\eta}_n} \left(\ol{\co}(\ul{Q}_n'(u_n^+, u_n^-; \hat{x}_{a(n)})) - \hat{\theta}_n \right).$
Then, following similar reasoning as in the proof of Proposition \ref{prop:tight_relu}, we conclude that the resulting cut is tight and Pareto-optimal.

\bibliographystyle{plainnat}
\bibliography{ref.bib}

@article{gade2014decomposition,
  title={Decomposition algorithms with parametric {G}omory cuts for two-stage stochastic integer programs},
  author={Gade, Dinakar and K{\"u}{\c{c}}{\"u}kyavuz, Simge and Sen, Suvrajeet},
  journal={Mathematical Programming},
  volume={144},
  number={1},
  pages={39--64},
  year={2014},
  publisher={Springer}
}

@article{zou2019stochastic,
  title={Stochastic dual dynamic integer programming},
  author={Zou, Jikai and Ahmed, Shabbir and Sun, Xu Andy},
  journal={Mathematical Programming},
  volume={175},
  number={1},
  pages={461--502},
  year={2019},
  publisher={Springer}
}

@article{benders1962partitioning,
  title={Partitioning Procedures for Solving Mixed-Variable Programming Problems, {N}umerische {M}atkematic 4},
  author={Benders, Jacques F},
  journal={SS8},
  year={1962}
}

@article{chen2022generating,
  title={On generating {L}agrangian cuts for two-stage stochastic integer programs},
  author={Chen, Rui and Luedtke, James},
  journal={INFORMS Journal on Computing},
  volume={34},
  number={4},
  pages={2332--2349},
  year={2022},
  publisher={INFORMS}
}

@article{fischetti2010note,
  title={A note on the selection of {B}enders’ cuts},
  author={Fischetti, Matteo and Salvagnin, Domenico and Zanette, Arrigo},
  journal={Mathematical Programming},
  volume={124},
  pages={175--182},
  year={2010},
  publisher={Springer}
}

@article{laporte1993integer,
  title={The integer {L}-shaped method for stochastic integer programs with complete recourse},
  author={Laporte, Gilbert and Louveaux, Fran{\c{c}}ois V},
  journal={Operations Research Letters},
  volume={13},
  number={3},
  pages={133--142},
  year={1993},
  publisher={Elsevier}
}

@article{sen2005c,
  title={The {$C^3$} theorem and a {$D^2$} algorithm for large scale stochastic mixed-integer programming: Set convexification},
  author={Sen, Suvrajeet and Higle, Julia L},
  journal={Mathematical Programming},
  volume={104},
  pages={1--20},
  year={2005},
  publisher={Springer}
}

@article{sen2006decomposition,
  title={Decomposition with branch-and-cut approaches for two-stage stochastic mixed-integer programming},
  author={Sen, Suvrajeet and Sherali, Hanif D},
  journal={Mathematical Programming},
  volume={106},
  pages={203--223},
  year={2006},
  publisher={Springer}
}

@article{zhang2014finitely,
  title={Finitely convergent decomposition algorithms for two-stage stochastic pure integer programs},
  author={Zhang, Minjiao and K\"u\c{c}\"ukyavuz, Simge},
  journal={SIAM Journal on Optimization},
  volume={24},
  number={4},
  pages={1933--1951},
  year={2014},
  publisher={SIAM}
}

@article{qi2017ancestral,
  title={The ancestral {B}enders’ cutting plane algorithm with multi-term disjunctions for mixed-integer recourse decisions in stochastic programming},
  author={Qi, Yunwei and Sen, Suvrajeet},
  journal={Mathematical Programming},
  volume={161},
  pages={193--235},
  year={2017},
  publisher={Springer}
}

@article{ntaimo2013fenchel,
  title={{F}enchel decomposition for stochastic mixed-integer programming},
  author={Ntaimo, Lewis},
  journal={Journal of Global Optimization},
  volume={55},
  pages={141--163},
  year={2013},
  publisher={Springer}
}

@article{van2024converging,
  title={A converging {B}enders’ decomposition algorithm for two-stage mixed-integer recourse models},
  author={van der Laan, Niels and Romeijnders, Ward},
  journal={Operations Research},
  volume={72},
  number={5},
  pages={2190--2214},
  year={2024},
  publisher={INFORMS}
}

@article{brandenberg2021refined,
  title={Refined cut selection for {B}enders decomposition: applied to network capacity expansion problems},
  author={Brandenberg, Ren{\'e} and Stursberg, Paul},
  journal={Mathematical Methods of Operations Research},
  volume={94},
  number={3},
  pages={383--412},
  year={2021},
  publisher={Springer}
}

@article{romeijnders2024benders,
  title={Benders decomposition with scaled cuts for multistage stochastic mixed-integer programs},
  author={Romeijnders, Ward and van der Laan, Niels},
  year={2024},
  journal={Optimization Online}
}

@article{angulo2016improving,
  title={Improving the integer {L}-shaped method},
  author={Angulo, Gustavo and Ahmed, Shabbir and Dey, Santanu S},
  journal={INFORMS Journal on Computing},
  volume={28},
  number={3},
  pages={483--499},
  year={2016},
  publisher={INFORMS}
}

@article{lemarechal1995new,
  title={New variants of bundle methods},
  author={Lemar{\'e}chal, Claude and Nemirovskii, Arkadii and Nesterov, Yurii},
  journal={Mathematical Programming},
  volume={69},
  pages={111--147},
  year={1995},
  publisher={Springer}
}

@article{pereira1991multi,
  title={Multi-stage stochastic optimization applied to energy planning},
  author={Pereira, Mario VF and Pinto, Leontina MVG},
  journal={Mathematical Programming},
  volume={52},
  pages={359--375},
  year={1991},
  publisher={Springer}
}

@article{blair1982value,
  title={The value function of an integer program},
  author={Blair, Charles E and Jeroslow, Robert G},
  journal={Mathematical Programming},
  volume={23},
  number={1},
  pages={237--273},
  year={1982},
  publisher={Springer}
}

@article{birge1985decomposition,
  title={Decomposition and partitioning methods for multistage stochastic linear programs},
  author={Birge, John R},
  journal={Operations Research},
  volume={33},
  number={5},
  pages={989--1007},
  year={1985},
  publisher={INFORMS}
}

@article{philpott2020midas,
  title={Midas: A mixed integer dynamic approximation scheme},
  author={Philpott, Andrew B and Wahid, Faisal and Bonnans, Joseph Fr{\'e}d{\'e}ric},
  journal={Mathematical Programming},
  volume={181},
  number={1},
  pages={19--50},
  year={2020},
  publisher={Springer}
}

@article{dowson2021sddp,
  title={SDDP. jl: a {J}ulia package for stochastic dual dynamic programming},
  author={Dowson, Oscar and Kapelevich, Lea},
  journal={INFORMS Journal on Computing},
  volume={33},
  number={1},
  pages={27--33},
  year={2021},
  publisher={INFORMS}
}

@article{ahmed2022stochastic,
  title={Stochastic {L}ipschitz dynamic programming},
  author={Ahmed, Shabbir and Cabral, Filipe Goulart and Freitas Paulo da Costa, Bernardo},
  journal={Mathematical Programming},
  volume={191},
  number={2},
  pages={755--793},
  year={2022},
  publisher={Springer}
}

@article{hosseini2021deepest,
  title={Deepest cuts for {B}enders decomposition},
  author={Hosseini, Mojtaba and Turner, John},
  journal={Operations Research},
  volume={73},
  number={5},
  pages={2591--2609},
  year={2025},
  publisher={INFORMS}
}

@article{magnanti1981accelerating,
  title={Accelerating {B}enders decomposition: Algorithmic enhancement and model selection criteria},
  author={Magnanti, Thomas L and Wong, Richard T},
  journal={Operations Research},
  volume={29},
  number={3},
  pages={464--484},
  year={1981},
  publisher={INFORMS}
}

@article{meyer1974existence,
  title={On the existence of optimal solutions to integer and mixed-integer programming problems},
  author={Meyer, Robert R},
  journal={Mathematical Programming},
  volume={7},
  number={1},
  pages={223--235},
  year={1974},
  publisher={Springer}
}

@article{fullner2024lipschitz,
  title={On {L}ipschitz regularization and {L}agrangian cuts in multistage stochastic mixed-integer linear programming},
  author={F{\"u}llner, Christian and Sun, X Andy and Rebennack, Steffen},
  journal={Available at Optimization Online},
  year={2024}
}

@article{fullnernew,
  title={A new framework to generate {L}agrangian cuts in multistage stochastic mixed-integer programming},
  author={F{\"u}llner, Christian and Sun, X Andy and Rebennack, Steffen},
  journal={Optimization Online},
  year={2024},
}

@article{deng2024relu,
  title={{On the ReLU Lagrangian cuts for stochastic mixed integer programming}},
  author={Deng, Haoyun and Xie, Weijun},
  journal={arXiv preprint arXiv:2411.01229},
  year={2024}
}

@article{yangyang2025,
  title={Globally Converging Algorithm for Multistage Stochastic Mixed-Integer
Programs via Enhanced {L}agrangian Cuts},
  author={Hanbin Yang and Haoxiang Yang},
  journal={Optimization Online},
  year={2025},
}

@phdthesis{stursberg2019mathematics,
  title={On the Mathematics of Energy System Optimization: Network Models, Decomposition, and Economic Incentives},
  author={Stursberg, Paul Melvin},
  year={2019},
  school={Technische Universit{\"a}t M{\"u}nchen}
}

@article{bansal2024computational,
  title={A computational study of cutting-plane methods for multi-stage stochastic integer programs},
  author={Bansal, Akul and K{\"u}{\c{c}}{\"u}kyavuz, Simge},
  journal={arXiv preprint arXiv:2405.02533},
  year={2024}
}

@article{dunning2017jump,
  title={JuMP: A modeling language for mathematical optimization},
  author={Dunning, Iain and Huchette, Joey and Lubin, Miles},
  journal={SIAM Review},
  volume={59},
  number={2},
  pages={295--320},
  year={2017},
  doi={10.1137/15m1020575}
}

@incollection{kuccukyavuz2017introduction,
  title={An introduction to two-stage stochastic mixed-integer programming},
  author={K{\"u}{\c{c}}{\"u}kyavuz, Simge and Sen, Suvrajeet},
  booktitle={Leading Developments from INFORMS Communities},
  pages={1--27},
  year={2017},
  publisher={INFORMS}
}

@article{romeijndersastochastic,
  title={Stochastic Mixed-Integer Programming: {A} Survey},
  author={Romeijndersa, Ward and Zhangb, Yihang and Sen, Suvrajeet},
  journal={Optimization Online},
  year={2025}
}

@article{fullner2025stochastic,
  title={Stochastic dual dynamic programming and its variants: {A} review},
  author={F{\"u}llner, Christian and Rebennack, Steffen},
  journal={SIAM Review},
  volume={67},
  number={3},
  pages={415--539},
  year={2025},
  publisher={SIAM}
}

@article{rahmaniani2017benders,
  title={The {B}enders decomposition algorithm: {A} literature review},
  author={Rahmaniani, Ragheb and Crainic, Teodor Gabriel and Gendreau, Michel and Rei, Walter},
  journal={European Journal of Operational Research},
  volume={259},
  number={3},
  pages={801--817},
  year={2017},
  publisher={Elsevier}
}

\end{document}